\def\H{\widetilde{H}}
\def\R{{\mathbb R}}
\def\N{{\mathbb N}}
\def\EE{{\mathcal E}}
\def\HH{{\mathcal H}}
\def\MM{{\mathcal M}}
\def\OO{{\mathcal O}}
\def\PP{{\mathcal P}}
\def\SS{{\mathcal S}}
\def\TT{{\mathcal T}}
\def\XX{{\mathcal X}}
\def\diam{{\rm diam}}
\def\norm#1#2{\|#1\|_{#2}}
\def\seminorm#1#2{\vert #1\vert_{#2}}
\def\abs#1{\seminorm{#1}{}}
\def\enorm#1{|\hspace*{-.5mm}|\hspace*{-.5mm}|#1|\hspace*{-.5mm}|\hspace*{-.5mm}|}
\def\set#1#2{\big\{#1\,:\,#2\big\}}
\def\eps{\varepsilon}
\def\dual#1#2{\langle#1\,,\,#2\rangle}
\def\u{\mathbf{u}}
\def\U{\mathbf{U}}
\def\v{\mathbf{v}}
\def\V{\mathbf{V}}
\def\mesh{\EE}
\def\wat#1{\widehat #1}
\def\slp{\mathfrak{V}} 
\def\dlp{\mathfrak{K}} 
\def\hyp{\mathfrak{W}} 
\def\Verfuerth{Verf\"urth}
\newcounter{constantsnumber}
\def\namec#1#2{%
  \ifthenelse{\equal{#1}{inv}}{C_{\rm inv}}{%
  \ifthenelse{\equal{#1}{invtilde}}{\widetilde C_{\rm inv}}{%
  \ifthenelse{\equal{#1}{lipschitz}}{C_{\rm lip}}{%
  \ifthenelse{\equal{#1}{monotone}}{C_{\rm mon}}{%
  \ifthenelse{\equal{#1}{reliability}}{C_{\rm rel}}{%
  \ifthenelse{\equal{#1}{caccioppoli}}{C_{\rm cacc}}{%
  \ifthenelse{\equal{#1}{xxx}}{\widetilde C_{\rm near}}{%
  \ifthenelse{\equal{#1}{nearfield}}{C_{\rm near}}{%
  \ifthenelse{\equal{#1}{farfield}}{C_{\rm far}}{%
  \ifthenelse{\equal{#1}{cea}}{C_{\mbox{\scriptsize C\'ea}}}{%
  \ifthenelse{\equal{#2}{newcounter}}{\refstepcounter{constantsnumber}\label{const#1}}{}C_{\ref{const#1}}}%
}}}}}}}}}}
\def\setc#1{\namec{#1}{newcounter}}
\def\c#1{\namec{#1}{reference}}
\def\osc{{\rm osc}}
\newtheorem{theorem}{Theorem}
\newtheorem{proposition}[theorem]{Proposition}
\newtheorem{lemma}[theorem]{Lemma}
\newtheorem{corollary}[theorem]{Corollary}
\newtheorem{algorithm}[theorem]{Algorithm}
\newenvironment{remark}{\bigskip\noindent\textbf{Remark.}\ \it}{\qed\bigskip}
\numberwithin{equation}{section}
\def\subsection#1
\bf\arabic{section}.\arabic{subsection}.~#1.~}
\def\revision#1{#1}
\def\interior{{\mathrm{int}}}
\def\frakA{{\mathfrak{A}}}
\def\OO{{\mathcal O}}
\def\div{{\rm div}}
\def\i{{\rm int}}
\def\e{{\rm ext}}
\def\normal{{\boldsymbol\nu}}
\def\mfrac#1#2{\mbox{$\frac{1}{2}$}}
\def\loc{{\ell oc}}
\def\sym{{\rm sym}}
\def\JJ{\mathcal J}
\begin{document}

\title[Inverse Estimates for Elliptic Boundary Integral Operators]%
{Inverse Estimates for Elliptic Boundary Integral Operators and Their Application to the Adaptive Coupling of FEM and BEM}
\date{\today}

\author{M.~Aurada}
\author{M.~Feischl}
\author{T.~F\"uhrer}
\author{M.~Karkulik}
\author{J.\ M.~Melenk}
\author{D.~Praetorius}
\address{Institute for Analysis and Scientific Computing,
       Vienna University of Technology,
       Wiedner Hauptstra\ss{}e 8-10,
       A-1040 Wien, Austria}
       \email{\{Michael.Feischl,\,Thomas.Fuehrer\}@tuwien.ac.at}
       \email{\{Melenk,\,Dirk.Praetorius\}@tuwien.ac.at}
\email{Michael.Karkulik@tuwien.ac.at\quad\rm(corresponding author)}

\keywords{FEM-BEM coupling, a~posteriori error estimate, adaptive algorithm, convergence}
\subjclass[2000]{65N30, 65N15, 65N38}

\begin{abstract}
We prove inverse-type estimates for the four classical boundary integral operators 
associated with the Laplace operator. These estimates are used to show 
convergence of an $h$-adaptive algorithm for the coupling of a finite element method
with a boundary element method which is driven by a weighted residual
error estimator. 
\end{abstract}


\maketitle

\section{Introduction}
\label{section:introduction}%

\noindent
{\sl A~posteriori} error estimation and adaptivity have a long history 
in finite element methods (FEMs), which harks back at least to 
the late 1970s. 
While the early mathematical analysis 
(see, e.g., the monographs 
\cite{ainsworth-oden00,babuska-strouboulis01,verfuerth96})
focussed on {\sl a posteriori} 
error estimation, significant progress has been made in the last decade 
in the analysis of the adaptive FEM (AFEM) regarding 
provable convergence and achievable convergence rates.
For linear elliptic model problems and discretizations
by fixed order polynomials on shape-regular meshes, convergence and even 
quasi-optimal convergence rates for AFEM  have been proved; we refer 
to \cite{ckns}
for symmetric problems as well as to~\cite{cn} for nonsymmetric
problems and the references therein. 

The situation is less developed for the adaptive boundary element method (ABEM)
and even worse for the adaptive coupling of FEM and BEM. For the ABEM based
on first kind integral equations, \cite{fkmp} and \cite{gantumur} proved very
recently convergence and optimality.  
Specifically, \cite{fkmp,gantumur} studied lowest order discretizations of equations 
related to the simple-layer operator and 
hypersingular operator. We highlight that the symmetry of these two operators 
is an important ingredient in the optimality proofs for the ABEM in 
\cite{fkmp,gantumur}. 
As a first step towards a full analysis of the more 
complex case of the adaptive coupling of FEM and BEM, we show in the 
present paper convergence for Costabel's symmetric coupling. 

Broadly speaking, the procedure in \cite{fkmp,gantumur} and the present work
relies on a framework delineated for AFEM in \cite{ckns}. 
The starting point for AFEM are 
reliable residual type error estimators. Ideally, the residual is 
measured in a dual norm; for example, in the classical Laplace Dirichlet 
problem with numerical approximation $u_h$ one has to evaluate 
$\|f + \Delta u_h\|_{H^{-1}(\Omega)}$. Since such duals norms are difficult 
to realize computationally, the classical residual error estimators mimic local versions of them 
by weighted $L^2$-norms of various components of the residual. 
The appropriate weight is given in terms of the local mesh size function $h$. 
Returning to the example of 
the Laplace Dirichlet problem, these are the elementwise
volume residuals  
$\|h (f + \Delta u_h)\|_{L^2(T)}$ and the edge/face jumps 
of the normal derivative 
$\|h^{1/2} [\partial_n u_h]\|_{L^2(E)}$.  
In effect, the residual is measured in stronger, but $h$-weighted 
Sobolev norms. Inverse estimates are therefore a key ingredient to showing 
efficiency of these estimators and convergence in the context of adaptive methods. 
A second feature of error estimators in the FEM is the local character 
of the volume and edge/face residuals. This feature stems from the fact that a {\em differential}
equation is considered. As a result, the only inverse estimates needed
in the FEM are classical ones relating stronger integer order Sobolev norms
to weaker ones. 

Our point of departure for ABEM are 
computable weighted residual error indicators made available in 
\cite{cc1997,cms,cmps,cs}. Analogously to the FEM, the nonlocal
nature of the norm in which to measure the residual (these are typically
non-integer Sobolev norms) is accounted for by $h$-weighted 
integer order Sobolev norms; for example, 
$\|h^{1/2} \nabla_\Gamma (\cdot)\|_{L^2(\Gamma)}$
is taken as a proxy for $|\cdot|_{H^{1/2}(\Gamma)}$, where $\nabla_\Gamma$ is the 
surface gradient on the surface $\Gamma$. 
The second feature of the residual error estimators in FEM mentioned
above is the local character of the residual. This feature is not present in 
boundary integral equations. For example, the weighted residual error estimator
for Symm's integral equation involving the simple-layer operator $V$ 
is $\|h^{1/2} \nabla_\Gamma (f - V u_h)\|_{L^2(\Gamma)}$;  
even when restricting the integral to a single element, 
the nonlocal nature of $V$ involves the numerical approximation $u_h$ on whole
surface $\Gamma$. 
As a result, the classical inverse estimates for spaces 
of piecewise polynomials, which were suitable in the FEM, are insufficient
for the BEM. The appropriate inverse estimates are provided in the present paper. 
In this connection, the works
\cite{fkmp,gantumur} are particularly relevant. Using similar techniques,
\cite{fkmp} considers the special case of the lowest order discretization 
of the simple-layer operator $\slp$, whereas the present article covers
arbitrary (fixed) order conforming discretizations of all four operators. 
The work of \cite{gantumur} leads to very similar results; possibly due to 
the use of wavelet techniques, \cite{gantumur} requires $C^{1,1}$-surfaces.
Instead, our analysis relies on techniques from local elliptic
regularity theory, and we may thus admit polyhedral surfaces here.
We finally note that 
all four operators appear in our formulation of the FEM-BEM coupling. 

The remainder of this work is organized as follows:
Section~\ref{section:invest} collects all necessary notations and preliminaries
(Section~\ref{section:invest:preliminaries}--\ref{section:invest:sobolev})
and proves the new inverse estimates
(Theorem~\ref{thm:invest} and Corollary~\ref{cor:invest}), which are the
main results of this work. Our analysis relies on elliptic regularity
estimates for the simple-layer potential $\widetilde\slp$ (Section~\ref{section:invest:aux})
and the double-layer potential $\widetilde\dlp$ (Section~\ref{sec:aux-dlp}).
In Section~\ref{section:afb}, we consider an adaptive algorithm
for Costabel's symmetric FEM-BEM coupling,
which is applied to
a linear transmission problem (Section~\ref{section:continuous}). Our
discretization includes the approximation of the given data so that an
implementation has to deal with discrete boundary integral operators
only (Section~\ref{section:perturbed}). 
We therefore extend the reliable error estimator
of~\cite{cs1995} to include data approximation terms in
Proposition~\ref{prop:aposteriori}.
Adapting the concept of estimator reduction~\cite{estconv}, which has
also been used for $(h-h/2)$-type error estimators in~\cite{afp},
we prove that the usual adaptive coupling (Algorithm~\ref{algorithm}) leads
to a perturbed contraction for the error estimator $\varrho_\ell$ and thus
obtain convergence $\varrho_\ell\to0$ as $\ell\to\infty$.
Since $\varrho_\ell$ provides an upper bound for the Galerkin error, which unlike~\cite{afp}
does \emph{not} rely on any saturation assumption, we thus obtain convergence of the
adaptive FEM-BEM coupling (Theorem~\ref{thm:convergence}).
A short Section~\ref{section:fembem:extension} discusses the extension of these 
result to nonlinear transmission problems. 
Numerical experiments in Section~\ref{section:numerics} illustrate the convergence of the 
adaptive FEM-BEM coupling procedure and give empirical evidence that the optimal order
of convergence is, in fact, achieved.

\section{Inverse estimates for integral operators}
\label{section:invest}%
\def\n{ {\mathbb{n}}}
\def\L{ {\mathcal{L}}}
\def\gint{ {\gamma_1^{\rm int}}}
\def\gext{ {\gamma_1^{\rm ext}}}
\def\supp{{\rm supp}}
\def\trace{ {\gamma_0^{\rm int}}}
\def\trext{ {\gamma_0^{\rm ext}}}
\def\el{ {E} }
  \def\S{\mathbb{S}}

\noindent
In this section, we prove certain inverse estimates for the 
four classical boundary integral operators associated with the Laplace 
operator. Independently of our work, similar estimates for \revision{fixed} order 
\revision{piecewise polynomials} have been proved in~\cite{gantumur} by means of 
wavelet-based techniques. For technical reasons, \cite{gantumur} 
assumes the boundary to be fairly smooth, namely, $C^{1,1}$.
In contrast, the present analysis is based on PDE techniques and 
allows us to treat polygonal/polyhedral boundaries. 
We mention that the estimate~\eqref{eq:invest:V} below has already been 
shown in our earlier work~\cite{fkmp} in a discrete setting for 
lowest order elements $\Psi_\ell\in\PP^0(\mesh_\ell)$ and is generalized 
here to the case of arbitrary $\psi\in L^2(\Gamma)$.

\subsection{Preliminaries \& general assumptions}
\label{section:invest:preliminaries}
Let $\Omega$ be a bounded Lipschitz domain in $\R^d$, for $d\ge2$, 
with polygonal/polyhedral Lipschitz boundary $\Gamma=\partial\Omega$. 
The exterior
unit normal vector field is denoted by $\normal$. We define $\Omega^\e:=\R^d\setminus\overline\Omega$. Let $\mesh_\ell$ denote a conforming triangulation of $\Gamma$ into simplices, i.e.
\begin{itemize}
\item $\mesh_\ell$ is a finite set of non-degenerate $(d-1)$-dimensional compact surface simplices, i.e., affine images of the reference simplex $E_{\rm ref}\subset\R^{d-1}$ with positive surface measure;
\item $\Gamma = \bigcup_{E\in\mesh_\ell} E$, i.e., $\Gamma$ is covered by $\mesh_\ell$;
\item for each $E,E'\in\mesh_\ell$ with $E\neq E'$, the intersection $E\cap E'$ is either empty,
  or a $j$-dimensional simplex for $j=0, \dots, d-2$,
  i.e., a joint node, or a joint edge, or a joint face, etc.
\end{itemize}
Moreover, we assume that $\mesh_\ell$ is $\kappa$-shape regular, i.e., 
\begin{align}\label{def:kappa:3d}
 \max_{E\in\mesh_\ell}\frac{\diam(E)^{d-1}}{|E|} \le \kappa < \infty
 \quad\text{for }d\ge3
\end{align}
with $|\cdot|$ the $(d-1)$-dimensional surface measure and $\diam(\cdot)$ the Euclidean diameter, whereas
\begin{align}\label{def:kappa:2d}
 \max_{\substack{E,E'\in\mesh_\ell\\E\cap E'\neq\emptyset}}\frac{\diam(E)}{\diam(E')}\le \kappa < \infty
 \quad\text{for }d=2.
\end{align}
Note that, for $d\ge3$, conformity of $\mesh_\ell$ and $\kappa$-shape regularity~\eqref{def:kappa:3d} also imply~\eqref{def:kappa:2d} (though with a different, but bounded constant $\widetilde\kappa$).
With each triangulation $\mesh_\ell$, we associate the local mesh size function $h_\ell\in L^\infty(\Gamma)$ which is defined elementwise by $h_\ell|_E := h_\ell(E) := |E|^{1/(d-1)}$ for all $E\in\mesh_\ell$.
We stress that $\kappa$-shape regularity of $\mesh_\ell$ implies $h_\ell|_E \simeq \diam(E)$.

Let $\gamma \subseteq \Gamma$ be a relatively open subset of $\Gamma$.
With
\begin{align}
 \mesh_\ell^\gamma := \mesh_\ell|_\gamma
 := \set{E\in\mesh_\ell}{E\subseteq\overline\gamma},
\end{align}
we denote the restriction of $\mesh_\ell$ to $\gamma$. It is always assumed 
that $\gamma$ is resolved by $\mesh_\ell$, i.e., $\mesh_\ell^\gamma$ is
a $\kappa$-shape regular and conforming triangulation of $\gamma$.

\subsection{Sobolev spaces and boundary integral operators}
\label{section:invest:sobolev}
In this section, we very briefly fix our notation concerning Sobolev spaces and 
boundary integral operators and refer the reader to 
the monographs~\cite{mclean,hsiaowendland,ss,verchota} for 
further details and the precise definitions. 

For the boundary $\Gamma = \partial\Omega$ of the bounded Lipschitz domain $\Omega$, we denote by 
$\nabla_\Gamma(\cdot)$ the surface gradient. The Sobolev space $H^1(\Gamma)$ can be defined as 
the completion of the Lipschitz continuous functions on $\Gamma$ with respect to the norm
\begin{align*}
  \norm{u}{H^1(\Gamma)}^2 := \norm{u}{L^2(\Gamma)}^2 + \norm{\nabla_\Gamma u}{L^2(\Gamma)}^2.
\end{align*}
We denote by $\trace(\cdot)$ the interior trace operator, i.e., the $\trace u$ is the 
restriction of a function $u\in H^1(\Omega)$ to the boundary $\Gamma$. 
The space $H^{1/2}(\Gamma)$ is the trace space of $H^1(\Omega)$ equipped in the standard
way with the quotient norm.
For relatively open subsets $\gamma \subset\Gamma$ and $s \in \{-1/2,0,1/2\}$, we denote by
\begin{align*}
  H^{1/2+s}(\gamma) &= \set{v|_\gamma}{v\in H^{1/2+s}(\Gamma)},\\
  \H^{1/2+s}(\gamma)&= \set{v|_\gamma}{v \in H^{1/2+s}(\Gamma), \operatorname*{supp} v \subset \overline{\gamma}}. 
\end{align*}
the space of all restrictions of functions to $\gamma$ and endow this spaces with the corresponding quotient norms.
In particular, if $v\in\H^{1/2+s}(\gamma)$ is extended by zero to 
the entire boundary $\Gamma$, then $v\in H^{1/2+s}(\Gamma)$ and $\norm{v}{\H^{1/2+s}(\gamma)} = \norm{v}{H^{1/2+s}(\Gamma)}$.
Finally, negative order spaces
\begin{align*}
  H^{-1/2}(\Gamma) &:= H^{1/2}(\Gamma)',\quad
  \H^{-(1/2+s)}(\gamma) &:= H^{1/2+s}(\gamma)',\quad\text{and}\quad
  H^{-(1/2+s)}(\gamma) &:= \H^{1/2+s}(\gamma)'
\end{align*}
are defined by duality, where duality pairings $\langle \cdot ,\cdot\rangle$ are understood to extend the 
standard $L^2$-scalar product. 
We note the continuous inclusions
\begin{align*}
  \H^{\pm(1/2+s)}(\gamma) &\subseteq H^{\pm(1/2+s)}(\gamma) 
 \quad\text{as well as}\quad
  \H^{\pm(1/2+s)}(\Gamma) = H^{\pm(1/2+s)}(\Gamma).
\end{align*}
The interior conormal derivative operator 
$\gamma_1^\interior:H^1_\Delta(\Omega)\rightarrow H^{-1/2}(\Gamma)$, where
\begin{align*}
  H^1_\Delta(\Omega):=
  \{u \in H^1(\Omega)\,|\, -\Delta u \in L^2(\Omega)\},
\end{align*}
is defined by the 
first Green's formula, viz., 
\begin{align}
  \label{eq:first Green identity}
  \dual{\gamma_1^\interior u}{v}_\Gamma
  =\dual{\nabla u}{\nabla v}_\Omega
  - \dual{-\Delta u}{v}_\Omega
  \quad\text{for all }v\in H^1(\Omega).
\end{align}

\begin{remark}
\label{rem:gamma_1}
The operator $\gamma_1^\interior$ generalizes the classical normal derivative 
operator: if $u \in H^1_\Delta(\Omega)$ is sufficiently smooth near a boundary
point $x_0$, then  $\gamma_1^\interior$ can be represented near $x_0$ by a function 
given by the pointwise defined  normal derivative $\partial_{\normal} u$.  
\end{remark}

The exterior trace $\trext$ and the exterior conormal derivative
operator $\gext$ are defined analogously to their interior counterparts. 
To that end, we fix a bounded Lipschitz domain $U \subset {\mathbb R}^d$ 
with $\overline{\Omega} \subset U$. The exterior trace operator 
$\trext:H^1(U\setminus\overline{\Omega}) \rightarrow H^{1/2}(\Gamma)$ 
is defined by restricting to $\Gamma$, and the exterior conormal derivative 
$\gext$ is characterized by 
 $ \dual{\gext u}{v}_\Gamma
  =\dual{\nabla u}{\nabla v}_{U\setminus\overline{\Omega}}
  - \dual{-\Delta u}{v}_{U\setminus\overline{\Omega}}$
  for all $v\in H^1(U\setminus\overline{\Omega}).$

For a function $u$ that admits both derivatives, we define
the jumps
$[\gamma_1 u] := \gext u - \gint u$, as well as $[u] = \trext u - \trace u$.

We denote by $G$ the fundamental solution of the $d$-dimensional Laplacian
\begin{align}
 G(x,y) = \begin{cases}
 -\frac{1}{|\S^1|}\,\log|x-y|,\quad&\text{for }d=2,\\
 +\frac{1}{|\S^{d-1}|}\,|x-y|^{-(d-2)},&\text{for }d\ge3,
 \end{cases}
\end{align}
where $|\S^{d-1}|$ denotes the surface measure of the Euclidean sphere in $\R^d$, e.g., $|\S^1|=2\pi$ and $|\S^2|=4\pi$.
The classical simple-layer potential $\widetilde\slp$ and the double layer potential
$\widetilde \dlp$ are defined by 
\begin{equation*}
(\widetilde \slp \psi)(x):= \int_\Gamma G(x,y) \psi(y)\,d\Gamma(y), 
\qquad 
(\widetilde \dlp v)(x):= \int_\Gamma \partial_{\normal(y)} G(x,y) \psi(y)\,d\Gamma(y), 
\qquad x \in {\mathbb R}^d\setminus \Gamma; 
\end{equation*}
here, $\partial_{\normal(y)}$ denotes the (outer) normal derivative with respect to the variable $y$. 
These pointwise defined operators can be extended to bounded linear operator with 
\begin{align}
\label{eq:mapping-properties-potentials}
 \widetilde\slp\in L\big(H^{-1/2}(\Gamma);H^1(U)\big)
 \quad\text{and}\quad
 \widetilde\dlp\in L\big(H^{1/2}(\Gamma);H^1(U\setminus\Gamma)\big).
\end{align}
It is classical that 
$\Delta\widetilde\slp\psi = 0 = \Delta\widetilde\dlp v$ in $U\setminus\Gamma$ 
for all $\psi\in H^{-1/2}(\Gamma)$ and $v\in H^{1/2}(\Gamma)$. 
The simple-layer, double-layer, adjoint double-layer, and the hypersingular operators are 
defined as follows: 
\begin{align}
  \slp = \trace\widetilde\slp, \quad \dlp = \frac{1}{2}+\trace\widetilde\dlp, \quad \dlp'=-\frac{1}{2}+\gint\widetilde\slp, \text{ and } \quad \hyp = -\gint\widetilde\dlp.
\end{align}
As is shown in
\revision{\cite{hsiaowendland,mclean,verchota}},  these linear operators 
have the following mapping properties for $s \in \{-1/2,0,1/2\}$ and representations 
(in the case of the hypersingular operator $\hyp$, the 
integral is understood as a \revision{part finite integral}): 
\begin{align}
 \label{def:slp}
 \slp&\in L(\H^{-1/2+s}(\gamma);H^{1/2+s}(\gamma)),
 &(\slp\psi)(x) &= \int_\Gamma G(x,y)\,\psi(y)\,d\Gamma(y),\\
 \label{def:dlp}
 \dlp&\in L(\H^{1/2+s}(\gamma);H^{1/2+s}(\gamma)),
 &(\dlp v)(x) &= \int_\Gamma \partial_{\normal(y)}G(x,y)\,v(y)\,d\Gamma(y),\\
 \label{def:adlp}
 \dlp'&\in L(\H^{-1/2+s}(\gamma);H^{-1/2+s}(\gamma)),
 &(\dlp'\psi)(x) &= \int_\Gamma \partial_{\normal(x)}G(x,y)\,v(y)\,d\Gamma(y),\\
 \label{def:hyp}
 \hyp&\in L(\H^{1/2+s}(\gamma);H^{-1/2+s}(\gamma)),
 &(\hyp v)(x) &= -\partial_{\normal(x)}\int_\Gamma \partial_{\normal(y)}G(x,y)\,v(y)\,d\Gamma(y).
\end{align}

\subsection{Statement of main result on inverse estimates}
The following theorem is the main result of this work
and the mathematical core of the arguments that allow to 
transfer convergence results from AFEM to ABEM.

\begin{theorem}
\label{thm:invest}
There exists
a constant $\setc{inv}>0$ such that the following estimates hold:
\begin{align}
\label{eq:invest:V}
\norm{h_\ell^{1/2}\nabla_\Gamma\slp\psi}{L^2(\gamma)}
& \le\c{inv}\big(\norm{\psi}{\H^{-1/2}(\gamma)}
+ \norm{h_\ell^{1/2}\psi}{L^2(\gamma)}\big),
\\
\label{eq:invest:Kadj}
\norm{h_\ell^{1/2}\dlp'\psi}{L^2(\gamma)}
& \le\c{inv}\big(\norm{\psi}{\H^{-1/2}(\gamma)}
+ \norm{h_\ell^{1/2}\psi}{L^2(\gamma)}\big),
\\
\label{eq:invest:K}
\norm{h_\ell^{1/2}\nabla_\Gamma\dlp v}{L^2(\gamma)}
&
\le\c{inv}\big(\norm{v}{\H^{1/2}(\gamma)}
+ \norm{h_\ell^{1/2}\nabla_\Gamma v}{L^2(\gamma)}\big),
\\
\label{eq:invest:W}
\norm{h_\ell^{1/2}\hyp v}{L^2(\gamma)}
&\le\c{inv}\big(\norm{v}{\H^{1/2}(\gamma)}
+ \norm{h_\ell^{1/2}\nabla_\Gamma v}{L^2(\gamma)}\big),
\end{align}
for all functions $\psi \in L^2(\gamma)$
and all $v \in \widetilde H^1(\gamma)$.
The constant $\c{inv}>0$ depends only on $\Gamma$, $\gamma$, and 
$\kappa$-shape regularity of $\mesh_\ell$.
\end{theorem}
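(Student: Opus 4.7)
My plan is to reduce all four inequalities to two local elliptic regularity estimates, one for the simple-layer potential $u_\psi := \widetilde\slp\psi$ and one for the double-layer potential $w_v := \widetilde\dlp v$, both of which are harmonic in $\Omega$. Using the identities recorded in Section~\ref{section:invest:sobolev}, $\slp\psi = \trace u_\psi$, $\dlp'\psi = -\tfrac12\psi + \gint u_\psi$, $\dlp v = \tfrac12 v + \trace w_v$, and $\hyp v = -\gint w_v$, the pointwise ``jump'' contributions $\tfrac12\psi$ in \eqref{eq:invest:Kadj} and $\tfrac12\nabla_\Gamma v$ in \eqref{eq:invest:K} are already of the form appearing on the right-hand side after multiplication by $h_\ell^{1/2}$. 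Thus all four inequalities reduce to bounding the two traces of $\nabla u$ on $\gamma$ for $u \in \{u_\psi, w_v\}$, namely $\|h_\ell^{1/2}\nabla_\Gamma\trace u\|_{L^2(\gamma)}$ (tangential part) and $\|h_\ell^{1/2}\gint u\|_{L^2(\gamma)}$ (normal part).

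The first step is elementwise. For each $E \in \mesh_\ell^\gamma$, I would introduce a curvilinear box $B_E \subset \Omega$ with face $E$ and height comparable to $h_\ell(E)$; by $\kappa$-shape regularity this can be arranged so that the family $\{B_E\}$ has bounded overlap. A scaled trace inequality combined with Caccioppoli's inequality on a slightly enlarged $\widehat B_E \subset \Omega$ (valid because $\Delta u = 0$) yields
$$h_\ell(E)\|\nabla u\|_{L^2(E)}^2 \lesssim \|\nabla u\|_{L^2(B_E)}^2 + h_\ell(E)^2\|\nabla^2 u\|_{L^2(B_E)}^2 \lesssim \|\nabla u\|_{L^2(\widehat B_E)}^2.$$
Summation over $E \in \mesh_\ell^\gamma$ and finite overlap then give
$$\|h_\ell^{1/2}\nabla u\|_{L^2(\gamma)}^2 \lesssim \|\nabla u\|_{L^2(T)}^2,$$
where $T := \bigcup_E \widehat B_E$ is a tubular neighborhood of $\gamma$ inside $\Omega$ whose local thickness is comparable to $h_\ell$. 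Both tangential and normal components of $\nabla u$ are controlled in one stroke, so this handles \eqref{eq:invest:V} and \eqref{eq:invest:Kadj} simultaneously, and similarly \eqref{eq:invest:K} and \eqref{eq:invest:W}.

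The key remaining step, which I expect to be the principal obstacle, is a sharp regularity estimate for $\|\nabla u\|_{L^2(T)}$ in terms of the data. The global mapping property \eqref{eq:mapping-properties-potentials} alone only yields $\|\nabla u_\psi\|_{L^2(T)} \lesssim \|\psi\|_{H^{-1/2}(\Gamma)}$ and ignores the thinness of $T$. The refinement one actually needs is, roughly,
$$\|\nabla \widetilde\slp\psi\|_{L^2(T)}^2 \lesssim \|\psi\|_{\H^{-1/2}(\gamma)}^2 + \|h_\ell^{1/2}\psi\|_{L^2(\gamma)}^2$$
together with an analogous bound for $\widetilde\dlp v$ in which $\|\nabla_\Gamma v\|_{L^2(\gamma)}$ replaces $\|\psi\|_{L^2(\gamma)}$, consistent with the fact that $\widetilde\dlp v$ has one fewer degree of regularity than $\widetilde\slp\psi$ at $\Gamma$. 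These bounds should follow from local elliptic regularity of the two potentials combined with a careful splitting of the density into its near-field and far-field contributions: the far-field piece enters through a smooth kernel and is absorbed by the dual norm, while the near-field piece is tamed via interior regularity of harmonic functions in a strip of width $\|h_\ell\|_{L^\infty(\gamma)}$. Exactly this is the content of the auxiliary results in Sections~\ref{section:invest:aux} and~\ref{sec:aux-dlp}; once available, combining them with the elementwise trace-and-Caccioppoli bound above yields all four inequalities \eqref{eq:invest:V}--\eqref{eq:invest:W} with a constant depending only on $\Gamma$, $\gamma$, and $\kappa$.
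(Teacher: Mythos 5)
There is a genuine gap in the first, ``elementwise'' step, and it is precisely the gap that forces the paper to organize the argument the other way around. You apply a trace inequality followed by a Caccioppoli inequality to the \emph{full} potential $u\in\{\widetilde\slp\psi,\widetilde\dlp v\}$ on boxes $B_E\subset\Omega$ whose face $E$ lies on $\Gamma$. Both ingredients fail there. First, the interior estimate $\norm{D^2u}{L^2(B_r)}\lesssim h^{-1}\norm{\nabla u}{L^2(B_{r+h})}$ requires the enlarged set to remain inside the region where $u$ is harmonic; a box touching $\Gamma$ cannot be enlarged toward $\Gamma$, and for points $x\in B_E$ with ${\rm dist}(x,\Gamma)=t\to0$ the interior bound degenerates like $t^{-1}$, not $h_\ell(E)^{-1}$. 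Consequently $h_\ell(E)^2\norm{\nabla^2u}{L^2(B_E)}^2\lesssim\norm{\nabla u}{L^2(\widehat B_E)}^2$ is false in general: for $\psi\in L^2(\Gamma)$ the potential $\widetilde\slp\psi$ is generically not in $H^2$ up to $\Gamma$ (its normal derivative jumps by $-\psi$ across $\Gamma$), so the left-hand side need not even be finite, and the trace inequality for $\nabla u$ on $E$ likewise requires $\nabla u\in H^1(B_E)$, which is unavailable. The same objection applies to $\widetilde\dlp v$, whose trace jumps across $\Gamma$.

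This is why the paper performs the near-field/far-field splitting \emph{before} any trace or Caccioppoli argument, and why the two pieces are treated by different tools. Only the far field $u_{\slp,E}^{\rm far}=\widetilde\slp(\psi\chi_{\Gamma\setminus U_E})$ is harmonic in a full neighborhood $U_E$ of $E$ that crosses $\Gamma$ (its jumps vanish on $\Gamma\cap U_E$); the Caccioppoli inequality of Lemma~\ref{lemma:caccioppoli:V} is applied on balls centered \emph{on} $E$ and sticking out on both sides of $\Gamma$, which is legitimate precisely for this piece, and only this piece admits the pointwise identity $\gint u^{\rm far}=\partial_\normal u^{\rm far}$ needed for $\dlp'$ and $\hyp$. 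The near field, by contrast, carries all the boundary singularity and is \emph{not} tamed by interior regularity ``in a strip,'' as your sketch of the second step suggests; it is controlled by the $L^2(\Gamma)\to H^1(\Gamma)$ boundedness of $\slp$ (Verchota), the $L^2\to L^2$ boundedness of $\dlp'$, explicit kernel estimates for a piecewise-constant projection of the density, and a patchwise Poincar\'e inequality in the double-layer case (Propositions~\ref{prop:nearfield:V} and~\ref{lemma:nearfield:K}). Your description also inverts the roles of the two pieces in the volume bound: in the paper the $\H^{-1/2}$-norm enters through the global stability of $\widetilde\slp$ applied to $u^{\rm far}=u-u^{\rm near}$, while the $h$-weighted $L^2$-norm comes from the near field. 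To repair the proposal you would need to restrict the trace-and-Caccioppoli step to the far field and supply an independent argument for the near field, at which point you have reconstructed the paper's proof.
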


\begin{remark}
The estimates~\eqref{eq:invest:V}--\eqref{eq:invest:W} are rather easy to show for globally 
quasi-uniform meshes, i.e., $h_\ell(E) \simeq h_\ell(E')$ for all $E$, $E'\in\mesh_\ell$. 
For example, to see~\eqref{eq:invest:V}, one recalls stability of $\slp:L^2(\gamma)\to H^1(\gamma)$ 
to get 
\begin{align}\label{eq:invest:uniform}
 \norm{h_\ell^{1/2}\nabla_\Gamma\slp\psi}{L^2(\gamma)}
 \lesssim h_\ell^{1/2} \norm{\slp\psi}{H^1(\gamma)}
 \lesssim h_\ell^{1/2} \norm{\psi}{L^2(\gamma)}
 \simeq \norm{h_\ell^{1/2}\psi}{L^2(\gamma)}.
\end{align}
\revision{
One sees that~\eqref{eq:invest:uniform} is slightly stronger than~\eqref{eq:invest:V}, where an additional term $\norm{\psi}{\H^{-1/2}(\gamma)}$ arises on the right-hand side.
}
\end{remark}%

For each element $E\in\mesh_\ell^\gamma$, let $\gamma_E:E_{\rm ref}\to E$ denote the affine bijection from the reference simplex $E_{\rm ref}\subset\R^{d-1}$ onto $E$. We introduce the space of (discontinuous) piecewise polynomials of degree $q$ and the spaces of continuous piecewise polynomials of degree $p$ by 
\begin{align}
 &\PP^q(\mesh_\ell^\gamma)
 := \set{\Psi_\ell\in L^2(\gamma)}{\forall E\in\mesh_\ell^\gamma\quad
 \Psi_\ell\circ\gamma_E\text{ is a polynomial of degree }\le q},\\
 &\SS^p(\mesh_\ell^\gamma) := \PP^p(\mesh_\ell^\gamma)\cap C(\gamma),
 \quad\text{and}\quad
 \SS^p_0(\mesh_\ell^\gamma) := \set{ V_\ell|_\gamma}{V_\ell\in\SS^p(\mesh_\ell) \text{ with } \supp(V_\ell)\subseteq\overline\gamma}. 
\end{align}
We note the inclusions
$\PP^q(\mesh_\ell^\gamma)\subset L^2(\gamma) \subset \H^{-1/2}(\gamma)$, $\SS^p_0(\mesh_\ell^\gamma)
\subset \H^1(\gamma) \subset \H^{1/2}(\gamma)$, and $\SS^p(\mesh_\ell^\gamma)\subseteq H^1(\gamma)$, as well as $\SS^p_0(\mesh_\ell) = \SS^p(\mesh_\ell)$ in case of $\gamma=\Gamma$. If we now restrict the estimates~\eqref{eq:invest:V}--\eqref{eq:invest:W} of Theorem~\ref{thm:invest} to discrete functions $\Psi_\ell\in\PP^q(\mesh_\ell^\gamma)$ and $V_\ell\in\SS^p_0(\mesh_\ell^\gamma)$,
we obtain the following estimates.

\begin{corollary}
\label{cor:invest}
There exists
a constant $\setc{invtilde}>0$ such that the following estimates hold:
  \begin{align}
   \label{eq:cor:invest:V}
   \max\{1,q\}^{-1}\norm{h_\ell^{1/2}\nabla_\Gamma\slp\Psi_\ell}{L^2(\gamma)}
   &\le\c{invtilde}\norm{\Psi_\ell}{\H^{-1/2}(\gamma)},\\
   \label{eq:cor:invest:Kadj}
   \max\{1,q\}^{-1}\norm{h_\ell^{1/2}\dlp'\Psi_\ell}{L^2(\gamma)}
   &\le\c{invtilde}\norm{\Psi_\ell}{\H^{-1/2}(\gamma)},\\
   \label{eq:cor:invest:K}
   p^{-1}\norm{h_\ell^{1/2}\nabla_\Gamma\dlp V_\ell}{L^2(\gamma)}
   &\le\c{invtilde}\norm{V_\ell}{\H^{1/2}(\gamma)},\\
   \label{eq:cor:invest:W}
   p^{-1}\norm{h_\ell^{1/2}\hyp V_\ell}{L^2(\gamma)}
   &\le\c{invtilde}\norm{V_\ell}{\H^{1/2}(\gamma)},
  \end{align}
  for all discrete functions $\Psi_\ell\in\PP^q(\mesh_\ell^\gamma)$ and
$V_\ell\in\SS^p_0(\mesh_\ell^\gamma)$.
  The constant $\c{invtilde}>0$ depends only on $\gamma$ and the
  shape regularity constant $\kappa$ of $\mesh_\ell$, but is independent of the polynomial degrees $q\ge0$ resp.\ $p\ge1$.
\end{corollary}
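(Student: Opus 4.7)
The plan is to deduce Corollary~\ref{cor:invest} from Theorem~\ref{thm:invest} by absorbing the mesh-weighted $L^2$-terms on the right-hand sides into the fractional Sobolev norms, which is legal only because we are now restricted to piecewise polynomials and may therefore invoke inverse inequalities. Concretely, since $\PP^q(\mesh_\ell^\gamma)\subset L^2(\gamma)$ and $\SS^p_0(\mesh_\ell^\gamma)\subset\widetilde H^1(\gamma)$, I may insert $\psi=\Psi_\ell$ and $v=V_\ell$ into \eqref{eq:invest:V}--\eqref{eq:invest:W} of Theorem~\ref{thm:invest}. The whole problem then reduces to proving two $p$- and $q$-explicit weighted inverse estimates on $\kappa$-shape regular meshes, namely
\begin{align*}
\norm{h_\ell^{1/2}\Psi_\ell}{L^2(\gamma)}
&\le C\,\max\{1,q\}\,\norm{\Psi_\ell}{\H^{-1/2}(\gamma)}
\quad\text{for all }\Psi_\ell\in\PP^q(\mesh_\ell^\gamma),\\
\norm{h_\ell^{1/2}\nabla_\Gamma V_\ell}{L^2(\gamma)}
&\le C\,p\,\norm{V_\ell}{\H^{1/2}(\gamma)}
\quad\text{for all }V_\ell\in\SS^p_0(\mesh_\ell^\gamma),
\end{align*}
with $C$ depending only on $\gamma$ and $\kappa$.

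Once these two inverse estimates are available, the conclusion is immediate: substituting the first into \eqref{eq:invest:V} and \eqref{eq:invest:Kadj} gives \eqref{eq:cor:invest:V} and \eqref{eq:cor:invest:Kadj}, and substituting the second into \eqref{eq:invest:K} and \eqref{eq:invest:W} gives \eqref{eq:cor:invest:K} and \eqref{eq:cor:invest:W}. In both cases the factor $\norm{\Psi_\ell}{\H^{-1/2}(\gamma)}$ (resp.\ $\norm{V_\ell}{\H^{1/2}(\gamma)}$) coming directly out of Theorem~\ref{thm:invest} is of strictly smaller order in $p$ and $q$ than the one coming from the weighted $L^2$-part, so it is dominated by the inverse-estimate term and one obtains exactly the $p$- and $q$-dependence claimed in the corollary.

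The real work is therefore the weighted inverse estimates themselves. On globally quasi-uniform meshes they are classical and follow directly from scaling and the one-dimensional polynomial inequality $\norm{\Phi}{L^2(E_{\rm ref})}\lesssim q\,\norm{\Phi}{H^{-1/2}(E_{\rm ref})}$ for polynomials of degree $\le q$, combined with the analogous inequality for $\SS^p$ and the $H^{1/2}$-norm. The main obstacle is to retain both the correct power of $p$ resp.\ $q$ and the correct \emph{local} mesh-size weighting on strongly graded, merely $\kappa$-shape regular meshes: the nonlocal norms $\H^{\pm 1/2}(\gamma)$ do not split into a sum of local contributions, so one cannot argue elementwise. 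This is typically circumvented by combining the elementwise scaled inverse estimate with a stable decomposition of $\H^{\pm1/2}(\gamma)$-functions adapted to the local mesh-size function $h_\ell$, e.g.\ via a mesh-size-weighted Scott--Zhang-type quasi-interpolation or, equivalently, a dyadic patchwise decomposition controlled by shape regularity. This part is the only nontrivial step; assuming these estimates (which are standard in the $h$- and $hp$-BEM literature), Corollary~\ref{cor:invest} follows from Theorem~\ref{thm:invest} in one line.
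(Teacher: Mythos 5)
Your proposal is correct and follows exactly the paper's own route: the paper likewise reduces the corollary to the two $p$- and $q$-explicit weighted inverse estimates $\max\{1,q\}^{-1}\norm{h_\ell^{1/2}\Psi_\ell}{L^2(\gamma)}\lesssim\norm{\Psi_\ell}{\H^{-1/2}(\gamma)}$ and $p^{-1}\norm{h_\ell^{1/2}\nabla_\Gamma V_\ell}{L^2(\gamma)}\lesssim\norm{V_\ell}{\H^{1/2}(\gamma)}$, which it cites from the literature (Georgoulis and Aurada--Karkulik--Praetorius) rather than reproving, and then combines them with Theorem~\ref{thm:invest} precisely as you do. The only difference is cosmetic: you sketch how those cited inverse estimates might be proved, whereas the paper simply references them.
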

\begin{remark}
  We stress that for discrete spaces with locally varying polynomial degrees, e.g.
  \begin{align*}
    \PP^{q_\ell}(\mesh_\ell^\gamma)
     := \set{\Psi_\ell\in L^2(\gamma)}{\forall E\in\mesh_\ell^\gamma\quad
     \Psi_\ell\circ\gamma_E\text{ is a polynomial of degree }\le q_\ell(\el)},
  \end{align*}
  the inverse estimates of Corollary~\ref{cor:invest} remain true as long as
  the polynomial degrees are comparable on neighboring elements, i.e.,
  $q_\ell(E) \simeq q_\ell(E')$ for \mbox{$E\cap E' \neq \emptyset$.} For example, the inverse estimate~\eqref{eq:cor:invest:V}
  involving $\slp$ then reads
  \begin{align*}
    \norm{h_\ell^{1/2}\max\{1,q_\ell\}^{-1}\nabla_\Gamma\slp\Psi_\ell}{L^2(\gamma)}
    &\le\c{invtilde}\norm{\Psi_\ell}{\H^{-1/2}(\gamma)} \quad\text{ for all } \Psi_\ell \in \PP^{q_\ell}(\mesh_\ell^\gamma),
  \end{align*}
  and the estimates~\eqref{eq:cor:invest:Kadj}--\eqref{eq:cor:invest:W} can be extended analogously.
  We refer the reader to~\cite[Theorem 4.4]{k}, where the inverse estimates are proved in this extended fashion.
\end{remark}
\begin{proof}[Proof of Corollary \ref{cor:invest}]
  Our starting point are two inverse estimates from~\cite[Theorem 3.9]{georgoulis} and~\cite[Proposition~3]{akp}:
  \begin{align}
    \label{eq:foo-1}
    \max\{1,q\}^{-1}\norm{h_\ell^{1/2}\Psi_\ell}{L^2(\gamma)}
    \lesssim \norm{\Psi_\ell}{\H^{-1/2}(\gamma)}
    \quad\text{for all }\Psi_\ell\in\PP^q(\mesh_\ell^\gamma),\\
    \label{eq:foo-2}
    p^{-1}\norm{h_\ell^{1/2}\nabla_\Gamma V_\ell}{L^2(\gamma)}
    \lesssim \norm{V_\ell}{\H^{1/2}(\gamma)}
    \quad\text{for all }V_\ell\in\SS^p_0(\mesh_\ell^\gamma),
  \end{align}
  where the \revision{hidden} constants depend solely on $\Gamma$ and the $\kappa$-shape regularity
  of $\mesh_\ell^\gamma$. 
  Combining \eqref{eq:foo-1} with~\eqref{eq:invest:V}--\eqref{eq:invest:Kadj} 
  leads to \eqref{eq:cor:invest:V}--\eqref{eq:cor:invest:Kadj};
  the bound \eqref{eq:foo-2} in conjunction 
  with~\eqref{eq:invest:K}--\eqref{eq:invest:W} yields 
  \eqref{eq:cor:invest:K}--\eqref{eq:cor:invest:W}.
\end{proof}

The remainder of this section is devoted to the proof of Theorem~\ref{thm:invest}. 
The proof will first be given for $\gamma=\Gamma$, and the general case 
$\gamma\subsetneqq\Gamma$ is deduced from it afterwards. 

On the technical side, an important difficulty of the proof of Theorem~\ref{thm:invest}
arises from the fact that the boundary integral operators are nonlocal. 
We cope with this issue by splitting the operators into near field 
and far field contributions, each requiring different tools. The analysis
of the near field part relies on local arguments and stability properties
of the BIO. For the far field part, the key observation
is that the BIOs are derived from two potentials, namely, the simple-layer 
potential $\widetilde \slp$ and the double-layer potential $\widetilde \dlp$ 
by taking appropriate traces. Since these potentials solve elliptic equations, 
inverse type estimates (``Caccioppoli inequalities'') are available for them. 
The study of these two potentials is the topic of Sections~\ref{section:invest:aux}
and~\ref{sec:aux-dlp}. We will discuss the case
of the simple-layer potential $\widetilde \slp$ in greater detail first
and be briefer afterwards in our treatment of the 
double-layer potential $\widetilde \dlp$, since the basic arguments are  
similar to  those for $\widetilde \slp$. 
\subsection{Far field and near field estimates for the simple layer potential}
\label{section:invest:aux}
\revision{We start with subsection~\ref{sec:notation-slp}, where we introduce the decomposition of the simple layer potential
into far field and near field. For either of this parts, we provide inverse estimates. Subsection~\ref{sec:nearfield-slp} is
devoted to the derivation of inverse estimates for the near field parts, whereas we deal with far field parts in
subsection~\ref{sec:farfield-slp}}

\subsubsection{Notation and decomposition into near field and far field}
\label{sec:notation-slp}
\def\x{{\bf x}}
For each element $E\in\mesh_\ell$ and $\delta>0$, we define the neighborhood
$U_E$ of $E$ by 
\begin{align}
 E \subset U_E := \bigcup_{x\in E} B_{2\delta h_\ell(E)}(x),
\end{align}
where $B_{\eps}(x) := \left\{ y\in\R^d \mid \abs{x-y}<\eps \right\}\subset\R^d$.
By $\kappa$-shape regularity of $\mesh_\ell$,
there exist $\delta>0$ and $M\in\N$ such that
$\Gamma\cap U_E$ is contained in the patch $\omega_\ell(E)$ of $E$, i.e., 
\begin{align}
\label{UE:patch}
  \Gamma\cap U_E \subseteq \omega_\ell(E):=\bigcup\set{E^\prime\in\mesh_\ell}{E^\prime\cap E\neq\emptyset},
\end{align}
and that the covering $\Gamma\subset\bigcup_{E\in\mesh_\ell}U_E$ is locally finite, i.e., 
\begin{align}\label{UE:overlap}
 \#\set{U_E}{E\in\mesh_\ell\text{ and }x\in U_E}\le M
 \quad\text{for all }x\in\R^d \text{ and $\ell\in\N$.}
\end{align}
\revision{%
Finally, we fix a bounded domain $U\subset\R^d$ such that
\begin{align}\label{def:U}
U_E \subset U
\quad\text{for all }E\in\mesh_\ell.
\end{align}%
}%
To deal with the nonlocality of the integral operators, we define
for functions $\psi\in L^2(\Gamma)$ and $E\in\mesh_\ell$ the near field and the far field
of the simple-layer potential $u_\slp = \widetilde\slp\psi$ by
\def\near{{\rm near}}
\def\far{{\rm far}}
\begin{align}
\label{eq:uslp_far_uslp_near}
 u_{\slp,E}^\near := \widetilde\slp(\psi\chi_{\Gamma\cap U_E})
 \quad\text{and}\quad
 u_{\slp,E}^\far := \widetilde\slp(\psi\chi_{\Gamma\setminus U_E}),
\end{align}
where $\chi_\omega$ denotes the characteristic function of the set $\omega\subset\R^d$.
We have the obvious identity
\begin{align}
 u_\slp =u_{\slp,E}^\near + u_{\slp,E}^\far
 \quad\text{for all }E\in\mesh_\ell.
\end{align}
In our analysis, we will treat $u_{\slp,E}^\near$ and $u_{\slp,E}^\far$ separately, starting
with the simpler case of $u_{\slp,E}^\near$. 
\subsubsection{Inverse estimates for the near field part $u_{\slp,E}^\near$}
\label{sec:nearfield-slp}
\revision{The near field parts of a potential can be treated with local arguments and the stability properties of the associated
boundary integral operators.}
\begin{lemma}\label{lemma:nearfield:V:1}
There exists a constant $\setc{xxx} > 0$ depending only on $\Gamma$
and the $\kappa$-shape regularity of $\mesh_\ell$ such that for arbitrary 
$E \in \mesh_\ell$ and $\Psi_\ell^E \in \PP^0(\mesh_\ell)$
with $\supp\left( \Psi_\ell^E \right)\subseteq\omega_\ell(E)$ there holds 
\begin{align*}
  \sum_{E\in\mesh_\ell}\norm{\nabla\widetilde\slp\Psi_\ell^E}{L^2(U_E)}^2
  \leq \c{xxx} \sum_{E\in\mesh_\ell}\norm{h_\ell^{1/2}\Psi_\ell^E}{L^2(\omega_\ell(E))}^2.
\end{align*}
\end{lemma}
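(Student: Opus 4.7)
The plan is to prove the lemma by combining an elementwise bound with a summation argument. Concretely, for each $E \in \mesh_\ell$, I would show
\begin{align*}
\norm{\nabla\widetilde\slp\Psi_\ell^E}{L^2(U_E)}^2 \lesssim h_\ell(E)\,\norm{\Psi_\ell^E}{L^2(\omega_\ell(E))}^2. \tag{$\dagger$}
\end{align*}
Granted $(\dagger)$, the assertion follows by summation over $E$, because $\kappa$-shape regularity gives $h_\ell|_{\omega_\ell(E)} \simeq h_\ell(E)$ and hence $h_\ell(E)\,\norm{\Psi_\ell^E}{L^2(\omega_\ell(E))}^2 \simeq \norm{h_\ell^{1/2}\Psi_\ell^E}{L^2(\omega_\ell(E))}^2$.

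For $(\dagger)$, the natural approach is a scaling argument. Consider the similarity $\Phi_E\colon \hat x \mapsto x_E + h_\ell(E)\,\hat x$ around a reference point $x_E \in E$, mapping a unit-size reference patch $\hat\omega$ to $\omega_\ell(E)$ and a unit-size neighborhood $\hat U$ to $U_E$. Setting $\hat\Psi := \Psi_\ell^E\circ\Phi_E$ and exploiting the homogeneity of the Newton kernel ($G(hz) = h^{2-d}G(z)$ for $d\ge 3$, and $G(hz) = G(z) - \log h/(2\pi)$ for $d=2$; the additive constant in the latter case does not affect the gradient), one verifies that $\nabla_x(\widetilde\slp\Psi_\ell^E)(\Phi_E(\hat x)) = \hat\nabla(\widetilde\slp_{\hat\Gamma}\hat\Psi)(\hat x)$, where $\hat\Gamma = \Phi_E^{-1}(\Gamma)$ is the correspondingly rescaled surface. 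The change-of-variables identities
\begin{align*}
\norm{\nabla\widetilde\slp\Psi_\ell^E}{L^2(U_E)}^2 = h_\ell(E)^{d}\,\norm{\hat\nabla\,\widetilde\slp_{\hat\Gamma}\hat\Psi}{L^2(\hat U)}^2,\qquad
\norm{\Psi_\ell^E}{L^2(\omega_\ell(E))}^2 = h_\ell(E)^{d-1}\,\norm{\hat\Psi}{L^2(\hat\omega)}^2,
\end{align*}
together with the mapping property~\eqref{eq:mapping-properties-potentials} applied on the reference configuration, $\norm{\widetilde\slp_{\hat\Gamma}\hat\Psi}{H^1(\hat U)}\lesssim \norm{\hat\Psi}{L^2(\hat\omega)}$, combine to yield
\begin{align*}
\norm{\nabla\widetilde\slp\Psi_\ell^E}{L^2(U_E)}^2 \lesssim h_\ell(E)^{d}\cdot h_\ell(E)^{-(d-1)}\,\norm{\Psi_\ell^E}{L^2(\omega_\ell(E))}^2 = h_\ell(E)\,\norm{\Psi_\ell^E}{L^2(\omega_\ell(E))}^2,
\end{align*}
which is precisely $(\dagger)$.

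The main obstacle is ensuring that the mapping constant on the rescaled reference configuration can be chosen uniformly in $E$ and $\ell$. For polyhedral $\Gamma$, only finitely many local geometries (up to similarity) arise at face interiors, edges, and vertices; $\kappa$-shape regularity then bounds the Lipschitz character of each patch $\hat\omega$ and its embedding into $\hat\Gamma$ uniformly. A standard compactness/continuity argument over this finite family of reference configurations produces a single implicit constant depending only on $\Gamma$ and $\kappa$, which closes the scaling step and hence the whole proof.
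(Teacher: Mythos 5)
Your reduction to the elementwise bound $(\dagger)$ followed by summation is structurally the same as the paper's proof, and your scaling identities are correct (including the remark that the additive constant in the two--dimensional kernel disappears under the gradient). The difference lies in how $(\dagger)$ is established, and that is where your argument has a genuine gap. You invoke the mapping property \eqref{eq:mapping-properties-potentials} ``on the reference configuration'' and justify uniformity of the constant by asserting that only finitely many local geometries arise up to similarity. That is not true: the rescaled patches $\widehat\omega=\Phi_E^{-1}(\omega_\ell(E))$ are determined by the mesh, not by $\Gamma$ alone, and under successive refinement they form a continuum of admissible configurations (varying valences, simplex shapes, positions relative to edges and corners of $\Gamma$), constrained only by $\kappa$-shape regularity. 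Moreover, the blown-up surface $\widehat\Gamma=\Phi_E^{-1}(\Gamma)$ is not a fixed bounded Lipschitz surface --- its diameter grows like $h_\ell(E)^{-1}$ --- so \eqref{eq:mapping-properties-potentials} cannot literally be ``applied on the reference configuration''. What you actually need is a local estimate $\norm{\nabla\widetilde\slp_{\widehat\Gamma}\widehat\Psi}{L^2(\widehat U)}\lesssim\norm{\widehat\Psi}{L^2(\widehat\omega)}$ for densities supported on a unit-size patch, with a constant depending only on $\kappa$ and $d$; a compactness argument over the (infinite, though precompact) family of configurations would in addition require continuity of this local operator norm with respect to the surface, which is itself a nontrivial claim for $L^2$ densities on Lipschitz patches.

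The gap is easy to close, and the paper's proof shows one way: exploit that $\Psi_\ell^E$ is piecewise constant. Writing $\Psi_\ell^E=\sum_{E'\subseteq\omega_\ell(E)}\Psi_\ell^E(E')\chi_{E'}$, the constant value is pulled out of the integral, so one only has to bound $\int_{U_E}\bigl(\int_{E'}|\nabla_xG(x,y)|\,d\Gamma(y)\bigr)^2dx$. Using $|\nabla_xG(x,y)|\lesssim|x-y|^{1-d}$ and enlarging $E'$ to a ball of radius $ch_\ell(E)$ in its hyperplane reduces everything, after your own scaling, to the single model integral $\int_{B_1(0)}\bigl(\int_{B_1(0)\cap\R^{d-1}}|x-y|^{1-d}\,d\Gamma(y)\bigr)^2dx<\infty$, which yields the uniform constant with no compactness argument at all. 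Alternatively, one could cite uniform layer-potential bounds for Lipschitz graphs whose constants depend only on the Lipschitz character; but ``finitely many configurations plus compactness'' is not, as stated, a valid substitute. With the uniformity step repaired in either way, the remainder of your argument (the change of variables and the final summation using $h_\ell|_{\omega_\ell(E)}\simeq h_\ell(E)$) is correct.
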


\begin{proof}
  We fix an element $E\in\mesh_\ell$. Denoting by 
  $\Psi_\ell^E(E')$ the value of $\Psi_\ell^E\in\PP^0(\mesh_\ell)$ on the element $E'$
  we compute 
  \begin{align*}
    (\nabla \widetilde\slp\Psi_\ell^E)(x)
= \sum_{E'\in\omega_\ell(E)}\Psi_\ell^E(E')\int_{E'}\nabla_xG(x,y)\,d\Gamma( y)
  \quad\text{for all } x \in \R^d\setminus \Gamma.
  \end{align*}
  The number of elements $E'$ in the patch $\omega_\ell(E)$ is bounded in terms
of the shape regularity constant $\kappa$ and thus
  \begin{align*}
    \abs{(\nabla \widetilde\slp\Psi_\ell^E)(x)}^2
\lesssim
\sum_{E'\in\omega_\ell(E)}
\abs{\Psi_\ell^E(E')}^2 \Big(\int_{E'}\big|\nabla_ xG(x,y)\big|\,d\Gamma( y)\Big)^2
  \end{align*}
with some $E$-independent constant, which depends only on $\kappa$.
  Since the mesh $\mesh_\ell$ is $\kappa$-shape regular, we can select
  a constant $c>0$, which depends solely on $\kappa$, such that
  $U_E \subseteq B_{ch_\ell(E)}(b_{E^\prime})$ and
  $E^\prime \subseteq B_{ch_\ell(E)}(b_{E^\prime})$
for each neighbor $E^\prime\in\omega_\ell(E)$. 
  We integrate over $U_E$ and estimate the remaining integral
  \begin{align*}
    \int_{U_E}\abs{(\nabla \widetilde\slp\Psi_\ell^E)(x)}^2\,d x
&\lesssim
\sum_{E'\in\omega_\ell(E)}\abs{\Psi_\ell^E(E')}^2
\int_{U_E}\Big(\int_{E'}\big|\nabla_ xG(x,y)\big|\,d\Gamma( y)\Big)^2\,d x\\
&\lesssim
\sum_{E'\in\omega_\ell(E)}\abs{\Psi_\ell^E(E')}^2
\int_{B_{ch_\ell(E)}(b_{E'})}
\Big(\int_{B_{ch_\ell(E)}(b_{E'})\cap\Gamma_{E'}}\frac{1}{\abs{ x- y}^{d-1}}\,d\Gamma( y)\Big)^2\,d x,
  \end{align*}
  where $\Gamma_{E'}$ denotes the hyperplane that is spanned by $E'$.
Scaling arguments then yield
  \begin{align*}
\int_{U_E}\abs{(\nabla \widetilde\slp\Psi_\ell^E)(x)}^2\,d x
&\lesssim
\sum_{E'\in\omega_\ell(E)}\abs{\Psi_\ell^E(E')}^2 h_\ell(E)^d
\int_{B_1(0)}\Big(\int_{B_{1}(0)\cap\R^{d-1}}\frac{1}{\abs{ x- y}^{d-1}}\,d\Gamma( y)\Big)^2\,d x \\
  &\lesssim \sum_{E'\in\omega_\ell(E)}\abs{\Psi_\ell^E(E')}^2 h_\ell(E)^d
  \lesssim \norm{h_\ell^{1/2}\Psi_\ell^E}{L^2(\omega_\ell(E))}^2.
  \end{align*}
  Summing this last estimate over all $E\in\mesh_\ell$, we conclude the proof.
\end{proof}

\begin{proposition}[Near field bound for $\widetilde\slp$]
\label{prop:nearfield:V}
There is a 
constant $\setc{nearfield}>0$ depending only on $\Gamma$ and the $\kappa$-shape regularity of $\mesh_\ell$
such that the near field part $u_{\slp,E}^\near$ satisfies 
\revision{$u_{\slp,E}^\near \in H^1(U)$} and 
$\gamma_0^\interior u_{\slp,E}^\near \in H^1(\Gamma)$ as well as 
\begin{align}\label{eq1:nearfield:V}
  \sum_{E\in\mesh_\ell}\norm{h_\ell^{1/2}\nabla_\Gamma \gamma_0^\interior u_{\slp,E}^\near}{L^2(E)}^2
  +
  \sum_{E\in\mesh_\ell}\norm{\nabla u_{\slp,E}^\near}{L^2(U_E)}^2
  \le \c{nearfield}\,\norm{h_\ell^{1/2}\psi}{L^2(\Gamma)}^2.
\end{align}
\end{proposition}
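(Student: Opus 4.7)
The proof splits into bounds for the two sums in~\eqref{eq1:nearfield:V}, which I would treat separately. Setting $\psi_E:=\psi\chi_{\Gamma\cap U_E}$, we have $\supp(\psi_E)\subseteq\omega_\ell(E)$ by~\eqref{UE:patch}, and $\gamma_0^\interior u_{\slp,E}^\near=\slp\psi_E$ by~\eqref{def:slp}. For the surface-gradient sum, I would invoke the classical mapping property $\slp\in L(L^2(\Gamma),H^1(\Gamma))$ on Lipschitz boundaries~\cite{verchota,mclean}, which immediately gives $\gamma_0^\interior u_{\slp,E}^\near\in H^1(\Gamma)$ together with the local estimate $\norm{\nabla_\Gamma\gamma_0^\interior u_{\slp,E}^\near}{L^2(E)}\lesssim\norm{\psi}{L^2(\omega_\ell(E))}$. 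Multiplying by $h_\ell(E)^{1/2}$, squaring, summing over $E\in\mesh_\ell$, and using $\kappa$-shape regularity (so $h_\ell(E)\simeq h_\ell|_{E'}$ for $E'\subseteq\omega_\ell(E)$) together with the finite overlap~\eqref{UE:overlap} of the patches $\omega_\ell(E)$ then yields the bound by $\norm{h_\ell^{1/2}\psi}{L^2(\Gamma)}^2$.

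For the volume-gradient sum I would adapt the pointwise technique of Lemma~\ref{lemma:nearfield:V:1} to general $L^2$-densities. Since the piecewise-constant factorization is no longer available, I would use the weighted Cauchy--Schwarz inequality $\bigl(\int K\,\psi_E\bigr)^2\leq\bigl(\int K\bigr)\bigl(\int K\,|\psi_E|^2\bigr)$ with the kernel $K(x,y):=|x-y|^{-(d-1)}\gtrsim|\nabla_xG(x,y)|$ integrated over $\omega_\ell(E)$; after applying Fubini's theorem, the desired estimate reduces to the scalar bound
\begin{equation*}
\sup_{y\in\omega_\ell(E)}\int_{U_E}A(x)\,|x-y|^{-(d-1)}\,dx\lesssim h_\ell(E),
\qquad A(x):=\int_{\omega_\ell(E)}|x-y|^{-(d-1)}\,d\Gamma(y).
\end{equation*}
A reference-configuration scaling argument, uniform in $E$ by $\kappa$-shape regularity, delivers this: scale-invariance of the combined singular kernels reduces the claim to a finite integral on a unit-sized reference patch, while the $d$-dimensional Lebesgue measure $dx$ supplies the factor $h_\ell(E)$. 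Membership $u_{\slp,E}^\near\in H^1(U)$ is immediate from $\widetilde\slp\in L(L^2(\Gamma),H^1(U))$ via $L^2(\Gamma)\subseteq H^{-1/2}(\Gamma)$. Summing $\norm{\nabla u_{\slp,E}^\near}{L^2(U_E)}^2\lesssim h_\ell(E)\norm{\psi}{L^2(\omega_\ell(E))}^2$ over $E$ and using the finite overlap of the $\omega_\ell(E)$ once more closes the second estimate.

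The principal obstacle is the volume-gradient bound for non-smooth $\psi$: the global property $\widetilde\slp\in L(L^2(\Gamma),H^1(U))$ alone would only produce $\norm{\psi}{L^2(\Gamma)}^2$ after summation, which exceeds the required $h$-weighted norm by a factor $h$. The weighted Cauchy--Schwarz trick, combined with the scale-invariance of $|x-y|^{-(d-1)}\,d\Gamma(y)$, is tailored precisely so that the extra $dx$-integration in $\R^d$ contributes the missing $h_\ell(E)$, and $\kappa$-shape regularity ensures that the resulting constant is element-independent.
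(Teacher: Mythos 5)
Your treatment of the first sum (the surface--gradient terms) is exactly the paper's: local application of the stability $\slp\colon L^2(\Gamma)\to H^1(\Gamma)$ from~\cite{verchota}, followed by summation using local comparability of $h_\ell$ and the finite overlap~\eqref{UE:overlap}. For the second sum, however, you take a genuinely different route. The paper splits $u_{\slp,E}^\near=\widetilde\slp(\Pi_\ell(\psi\chi_{\Gamma\cap U_E}))+\widetilde\slp((1-\Pi_\ell)\psi\chi_{\Gamma\cap U_E})$ with $\Pi_\ell$ the $L^2$-projection onto $\PP^0(\mesh_\ell)$, handles the projected part by Lemma~\ref{lemma:nearfield:V:1}, and controls the remainder through the mapping property $\widetilde\slp\colon H^{-1/2}(\Gamma)\to H^1(U)$ combined with the approximation estimate $\norm{(1-\Pi_\ell)\phi}{H^{-1/2}(\Gamma)}\lesssim\norm{h_\ell^{1/2}\phi}{L^2(\Gamma)}$ from~\cite[Theorem~4.1]{ccdpr:symm} --- it is precisely this last estimate that supplies the missing factor $h_\ell^{1/2}$ you identify as the principal obstacle. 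You instead estimate the kernel integral directly for general $L^2$-densities via the weighted Cauchy--Schwarz inequality and a scaling argument; this is correct, subsumes Lemma~\ref{lemma:nearfield:V:1} as a special case, and has the advantage of being self-contained (no projection, no external $H^{-1/2}$ approximation result). The price is that you must verify the uniform bound $\sup_{y}\int_{U_E}A(x)\,|x-y|^{-(d-1)}\,dx\lesssim h_\ell(E)$ with some care: since the exponent $d-1$ equals the surface dimension, $A(x)$ is not bounded but only logarithmically singular as $x$ approaches $\Gamma$ (on the unit-scale reference configuration, $\widehat A(\widehat x)\lesssim 1+\log(1/\operatorname{dist}(\widehat x,\widehat\Gamma))$); this is harmless because the subsequent integration is $d$-dimensional and $|\widehat x-\widehat y|^{-(d-1)}$ times a logarithm is integrable on $\R^d$, and $\kappa$-shape regularity keeps the reference constant element-independent --- but the verification should be made explicit rather than absorbed into the phrase ``finite integral on a reference patch.''
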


\begin{proof}
  The stability assertion $\slp:L^2(\Gamma)\rightarrow H^1(\Gamma)$ proved in 
  \cite{verchota} gives, for each  $E \in \mesh_\ell$, 
  \begin{align*}
   \norm{\nabla_\Gamma \gamma_0^\interior u_{\slp,E}^\near}{L^2(E)}
   \leq \norm{\slp( \psi\chi_{U_E\cap\Gamma})}{H^1(\Gamma)}
   \lesssim \norm{\psi\chi_{U_E\cap\Gamma}}{L^2(\Gamma)}
   = \norm{\psi}{L^2(U_E\cap\Gamma)}.
  \end{align*}
  Summing the last estimate over all $E \in \mesh_\ell$ and using the finite overlap
  property~\eqref{UE:overlap} of the set $U_E$, we arrive at 
  \begin{align*}
   \sum_{E\in\mesh_\ell}\norm{h_\ell^{1/2}\nabla_\Gamma \gamma_0^\interior u_{\slp,E}^\near}{L^2(E)}^2
   &= \sum_{E\in\mesh_\ell}h_\ell(E)\norm{\nabla_\Gamma \gamma_0^\interior u_{\slp,E}^\near}{L^2(E)}^2\\
   &\lesssim \sum_{E\in\mesh_\ell}h_\ell(E)\norm{\psi}{L^2(U_E\cap\Gamma)}^2
   \simeq \norm{h_\ell^{1/2}\psi}{L^2(\Gamma)}^2,
  \end{align*}
  where all estimates depend only on the $\kappa$-shape regularity constant $\kappa$. This bounds the first term on the left-hand side of~\eqref{eq1:nearfield:V}.
  To bound the second term, let $\Pi_\ell$ denote the $L^2(\Gamma)$-orthogonal projection onto $\PP^0(\mesh_\ell)$.
  We decompose the near field as 
$u_{\slp,E}^\near
= \widetilde \slp(\Pi_\ell(\psi\chi_{\Gamma\cap U_E}))
+ \widetilde \slp\big( (1-\Pi_\ell)\psi\chi_{\Gamma\cap U_E}\big)$.
The condition $\supp(\psi\chi_{\Gamma\cap U_E})\subseteq\omega_\ell(E)$
implies $\supp\left(\Pi_\ell(\psi\chi_{\Gamma\cap U_E})\right) \subseteq \omega_\ell(E)$ and therefore, 
  taking $\Psi_\ell^E = \Pi_\ell(\psi\chi_{\Gamma\cap U_E})$ in Lemma~\ref{lemma:nearfield:V:1}, we conclude
  \begin{align}
  \label{eq:foo-10}
    \sum_{E\in\mesh_\ell}\norm{\nabla\widetilde\slp(\Pi_\ell(\psi\chi_{\Gamma\cap U_E}))}{L^2(U_E)}^2
    \lesssim \sum_{E\in\mesh_\ell}\norm{h_\ell^{1/2}\Pi_\ell(\psi\chi_{\Gamma\cap U_E})}{L^2(\omega_\ell(E))}^2
    \lesssim \norm{h_\ell^{1/2}\psi}{L^2(\Gamma)}^2,
  \end{align}
  where we used the local $L^2$-stability of $\Pi_\ell$ in the last estimate.
Recalling the stability $\widetilde\slp:H^{-1/2}(\Gamma)\to H^1(U)$ of \eqref{eq:mapping-properties-potentials}
we can estimate 
\begin{align*}
  \norm{\nabla \widetilde\slp\big((1-\Pi_\ell) \psi\chi_{\Gamma\cap U_E}\big)}{L^2(U_E)}
  &\lesssim \norm{\nabla \widetilde\slp\big((1-\Pi_\ell) \psi\chi_{\Gamma\cap U_E}\big)}{L^2(U)}\\
  &\lesssim \norm{(1-\Pi_\ell) \psi\chi_{\Gamma\cap U_E}}{H^{-1/2}(\Gamma)} 
\end{align*}
Together with a local approximation result for $\Pi_\ell$ from~\cite[Theorem~4.1]{ccdpr:symm}, we obtain
\begin{align}
\nonumber 
\sum_{E\in\mesh_\ell}\norm{\nabla \widetilde\slp\big((1-\Pi_\ell) \psi\chi_{\Gamma\cap U_E}\big)}{L^2(U_E)}^2
&
\lesssim \sum_{E\in\mesh_\ell}\norm{(1-\Pi_\ell) \psi\chi_{\Gamma\cap U_E}}{H^{-1/2}(\Gamma)}^2
\\ &
  \label{eq:foo-20}
\lesssim \sum_{E\in\mesh_\ell}\norm{h_\ell^{1/2}(\psi\chi_{\Gamma\cap U_E})}{L^2(\Gamma)}^2
\simeq \norm{h_\ell^{1/2}\psi}{L^2(\Gamma)}^2.
\end{align}
\revision{The combination of~\eqref{eq:foo-10}--\eqref{eq:foo-20}} yields 
 the desired estimate in~\eqref{eq1:nearfield:V} for $\sum_{E \in \mesh_\ell} \|\nabla u_{\slp,E}^\near\|^2_{L^2(U_E)}$.
\end{proof}

\subsubsection{Estimates for the far field part $u_{\slp,E}^\far$}
\label{sec:farfield-slp}
The following lemma is taken from~{\cite{fkmp}}.
For the convenience of the reader and since the same argument underlies 
the proof of the analogous lemma regarding
the double-layer potential, we recall its proof here.

\def\DD{ {\mathcal D }}
\def\dx{\,dx}
\def\y{ {\mathbf y}}
\begin{lemma}[Caccioppoli inequality for $u_{\slp,E}^\far$]
\label{lemma:caccioppoli:V}
There is a constant $\setc{caccioppoli}>0$ depending only on the 
$\kappa$-shape regularity of $\mesh_\ell$ such that for the 
function $u_{\slp,E}^\far$ of 
\eqref{eq:uslp_far_uslp_near} the following is true:
$u_{\slp,E}^\far|_\Omega \in C^\infty(\Omega)$,
$u_{\slp,E}^\far|_{\Omega^\e} \in C^\infty(\Omega^\e)$,
and $u_{\slp,E}^\far|_{U_E} \in C^\infty(U_E)$ with
\begin{align}\label{eq:caccioppoli:V}
  \norm{D^2u_{\slp,E}^\far}{L^2(B_{\delta h_\ell(E)}(x))}
  \le \c{caccioppoli}\,\frac{1}{h_\ell(E)}\,\norm{\nabla u_{\slp,E}^\far}{L^2(B_{2\delta h_\ell(E)}(x))}
  \quad\text{for all }x\in E\in \mesh_\ell.
\end{align}
\end{lemma}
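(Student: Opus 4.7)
The plan is to exploit the fact that $u_{\slp,E}^\far$ is harmonic on $U_E$ (and elsewhere off the complementary support), and then apply the classical interior Caccioppoli inequality for harmonic functions to the partial derivatives of $u_{\slp,E}^\far$. Since $\widetilde\slp(\cdot)$ is defined by a convolution with the fundamental solution $G$, and since $G(\cdot,y)$ is a smooth harmonic function of the first argument on $\R^d\setminus\{y\}$, the potential $u_{\slp,E}^\far=\widetilde\slp(\psi\chi_{\Gamma\setminus U_E})$ is a smooth harmonic function on the open set $\R^d\setminus(\Gamma\setminus U_E)$. In particular it is in $C^\infty(\Omega)$, $C^\infty(\Omega^\e)$, and $C^\infty(U_E)$, because each of these sets is disjoint from $\Gamma\setminus U_E$.

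First, I would record the geometric fact: for any $x\in E$, the definition $U_E=\bigcup_{x'\in E}B_{2\delta h_\ell(E)}(x')$ immediately gives $B_{2\delta h_\ell(E)}(x)\subset U_E$, so both balls appearing in~\eqref{eq:caccioppoli:V} lie inside $U_E$, the region of harmonicity. Next, I would invoke the standard Caccioppoli inequality: if $v$ is harmonic on a ball $B_R(x)$ and $0<r<R$, then
\begin{align*}
\norm{\nabla v}{L^2(B_r(x))}\le \frac{C}{R-r}\,\norm{v-c}{L^2(B_R(x))}
\end{align*}
for every constant $c$, the proof being a three-line energy estimate using a cut-off $\eta$ with $\eta\equiv 1$ on $B_r$, $\supp\eta\subset B_R$, $\|\nabla\eta\|_\infty\lesssim 1/(R-r)$, and testing $-\Delta v=0$ with $\eta^2(v-c)$.

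Now, since $u_{\slp,E}^\far$ is harmonic on $U_E$, each partial derivative $\partial_j u_{\slp,E}^\far$ is also harmonic on $U_E$. Applying the Caccioppoli inequality to $v:=\partial_j u_{\slp,E}^\far$ on the concentric balls $B_{\delta h_\ell(E)}(x)\subset B_{2\delta h_\ell(E)}(x)\subset U_E$, taking $c=0$, yields
\begin{align*}
\norm{\nabla\partial_j u_{\slp,E}^\far}{L^2(B_{\delta h_\ell(E)}(x))}
\le \frac{C}{\delta h_\ell(E)}\,\norm{\partial_j u_{\slp,E}^\far}{L^2(B_{2\delta h_\ell(E)}(x))}.
\end{align*}
Summing the squares over $j=1,\dots,d$ and taking square roots produces~\eqref{eq:caccioppoli:V}, with a constant $\c{caccioppoli}$ that depends only on $\delta$ and hence only on $\kappa$.

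Honestly, there is no serious obstacle here; the only care required is the bookkeeping of smoothness and the verification that $B_{2\delta h_\ell(E)}(x)\subset U_E$ for $x\in E$. The substantive content is entirely contained in the classical interior regularity/Caccioppoli estimate for harmonic functions, applied componentwise to $\nabla u_{\slp,E}^\far$.
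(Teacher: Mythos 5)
Your proof is correct, and while it follows the paper's overall strategy --- establish harmonicity of $u_{\slp,E}^{\rm far}$ on $U_E$, then apply an interior elliptic estimate on the concentric balls $B_{\delta h_\ell(E)}(x)\subset B_{2\delta h_\ell(E)}(x)\subset U_E$ --- both steps are carried out by genuinely different means. For the smoothness, you differentiate under the integral sign, using that the density $\psi\chi_{\Gamma\setminus U_E}$ is an $L^2$ function supported at positive distance from every point of $U_E$; the paper instead characterizes $u_{\slp,E}^{\rm far}$ as the solution of a transmission problem whose jumps $[u]$ and $[\gamma_1 u]$ vanish on $\Gamma\cap U_E$, deduces distributional harmonicity in $U_E$ by a two-fold integration by parts, and invokes Weyl's lemma. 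Your route is more elementary, but it implicitly uses that the pointwise integral coincides with the $H^{-1/2}(\Gamma)\to H^1(U)$ extension of $\widetilde\slp$ appearing elsewhere in the argument; this identification is standard for $L^2$ densities and is precisely what the paper's appeal to the jump relations of Sauter--Schwab supplies, so it deserves a sentence rather than being taken for granted. For the estimate itself, you apply the first-order Caccioppoli inequality (with $c=0$) to each harmonic derivative $\partial_j u_{\slp,E}^{\rm far}$ and sum over $j$; the paper instead applies Morrey's interior $H^2$ estimate to $u_{\slp,E}^{\rm far}-c_E$, with $c_E$ the mean value over the larger ball, and then needs a Poincar\'e inequality to absorb the zeroth-order term. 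Your version avoids both the mean-value subtraction and the Poincar\'e step, and it transfers verbatim to the double-layer far field of Lemma~\ref{lemma:caccioppoli:K}, since the kernel $\partial_{\normal(y)}G(x,y)$ is likewise smooth and harmonic in $x$ away from $y$ and the relevant density there again lies in $L^2(\Gamma)$.
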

%
\begin{proof}
  The statements $u_{\slp,\el}^\far|_\Omega \in C^\infty(\Omega)$ and 
  $u_{\slp,\el}^\far|_{\Omega^\e} \in C^\infty(\Omega^\e)$ are taken from \cite[Theorem~{3.1.1}]{ss},
  and we therefore focus on the statements on 
  $u_{\slp,\el}^\far|_{E_E} \in C^\infty(U_E)$ and the estimate 
\eqref{eq:caccioppoli:V}. According to~\cite[Proposition~{3.1.7}]{ss},~\cite[Theorem~{3.1.16}]{ss}, 
and~\cite[Theorem~{3.3.1}]{ss}, the function 
\revision{$u_{\slp,E}^\far\in H^1_{\loc}(\R^d):=\set{v:\R^d\to\R}{v|_K\in H^1(K)\text{ for all }K\subset\R^d
\text{ compact}}$}
solves the transmission problem
  \begin{align}\label{lem:farfield:eq:transmission}
  \begin{array}{rcll}
    -\Delta u_{\slp,E}^\far &=& 0 \quad &\text{a.e. in } \Omega \cup \Omega^\e \\{}
    [u_{\slp,E}^\far] &=& 0 &\text{in } H^{1/2}(\Gamma)\\{}
    [\gamma_1 u_{\slp,E}^\far] &=& -\psi\chi_{\Gamma\setminus U_E}
    &\text{in } H^{-1/2}(\Gamma).
  \end{array}
  \end{align}
In particular, \eqref{lem:farfield:eq:transmission} states that the jump of 
$u_{\slp,E}^\far$ as well as the jump of the normal derivative vanish
on $\Gamma \cap U_E$. This implies that $u_{\slp,E}$ is harmonic in $U_E$
by the following classical argument: First, we observe that $u_{\slp,E}^\far$ 
is distributionally harmonic in $U_E$, since a two-fold  
integration by parts that uses these jump conditions shows 
for 
$v \in C^\infty_0(U_E)$ that 
$\langle u_{\slp,E}^\far,-\Delta v\rangle = 0$.
Weyl's lemma (see, e.g., \cite[Theorem~{2.3.1}]{morrey66}) then implies that 
$u_{\slp,E}^\far$ is therefore strongly harmonic and $
u_{\slp,E}^\far \in C^\infty(U_E)$. 

  The Caccioppoli inequality \eqref{eq:caccioppoli:V} now expresses 
  interior regularity for elliptic problems. Indeed, 
  \cite[Lemma~{5.7.1}]{morrey66} shows 
  \begin{align}
\label{est:inverse_est_cf_Morrey}
   \norm{D^2 u}{L^2(B_r)}
   \lesssim \Big( \norm{f}{L^2(B_{r+h})}
   + \frac{1}{h}\,\norm{\nabla u}{L^2(B_{r+h})}
   + \frac{1}{h^2}\norm{u}{L^2(B_{r+h})}\Big)
  \end{align}
  for each $u\in H^1(B_{r+h})$ such that $u\in H^2(B_r)$ and $\Delta u=f$ on $B_{r+h}$
with balls
$\overline{B}_r\subset B_{r+h}$
with radii $0<r<r+h$ and some $f\in L^2(B_{r+h})$;
\revision{the hidden constant} depends solely on the spatial dimension and is independent of 
  $r$, $h>0$, and $u$, $f$.
  We apply \eqref{est:inverse_est_cf_Morrey} with $f = 0$ and $u = u_{\slp,E}^\far-c_E$,
where $c_E = \frac{1}{|B_{2\delta h_\ell(E)}( x)|}\int_{B_{2\delta h_\ell(E)}( x)}u_{\slp,E}^\far(y)\,dy$.
Using additionally a Poincar\'e inequality then leads to \eqref{eq:caccioppoli:V}. 
\end{proof}
The nonlocal character of the operator $\widetilde \slp$ is represented by the
far field part. 
Lemma~\ref{lemma:caccioppoli:V} allows us to show a local inverse estimate 
for the far field part of the simple-layer operator: 

\begin{lemma}[Local far field bound for $\widetilde\slp$]\label{lemma:farfield:V}
  For all $E\in\mesh_\ell$, there holds 
\begin{align}
\label{eq:farfield:V}
\norm{h_\ell^{1/2}\nabla_\Gamma \gamma_0^\interior u_{\slp,E}^\far}{L^2(E)}
\le \norm{h_\ell^{1/2}\nabla u_{\slp,E}^\far}{L^2(E)}
\le \c{farfield}\,\norm{\nabla u_{\slp,E}^\far}{L^2(U_E)}. 
\end{align}
  The constant $\setc{farfield}>0$ depends only on $\Gamma$ and the $\kappa$-shape regularity constant of $\mesh_\ell$.
\end{lemma}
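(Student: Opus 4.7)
The plan is to chain a scaled trace inequality with the interior Caccioppoli bound of Lemma~\ref{lemma:caccioppoli:V}. The first inequality in~\eqref{eq:farfield:V} is immediate: Lemma~\ref{lemma:caccioppoli:V} guarantees $u_{\slp,E}^\far \in C^\infty(U_E)$ with $E \subset U_E$, so on $E$ the surface gradient of $\gamma_0^\interior u_{\slp,E}^\far$ is just the tangential projection of the classical gradient $\nabla u_{\slp,E}^\far$; since $|\nabla_\Gamma(\cdot)| \le |\nabla(\cdot)|$ pointwise and $h_\ell$ is constant on $E$, integration over $E$ yields the bound.

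For the second inequality, I would set $h := h_\ell(E)$ and introduce an inner tubular neighborhood $\widetilde U_E := \bigcup_{x \in E} B_{\delta h}(x)$, which strictly sits between $E$ and $U_E$. Applied componentwise to $w := \nabla u_{\slp,E}^\far$, a scaled Sobolev trace inequality on $\widetilde U_E$ gives
\[
\norm{\nabla u_{\slp,E}^\far}{L^2(E)}^2
\le C\bigl(h^{-1}\norm{\nabla u_{\slp,E}^\far}{L^2(\widetilde U_E)}^2 + h\,\norm{D^2 u_{\slp,E}^\far}{L^2(\widetilde U_E)}^2\bigr),
\]
with a constant depending only on $\kappa$ and $\delta$; this follows by rescaling the standard trace inequality on a reference configuration of size $\sim 1$ by the factor $h$, the $\kappa$-shape regularity ensuring uniformity of the reference shape.

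To eliminate the Hessian term, I would invoke Lemma~\ref{lemma:caccioppoli:V}. By $\kappa$-shape regularity, the set $\widetilde U_E$ can be covered by $N = N(\kappa)$ balls $B_{\delta h}(x_i)$ with centers $x_i \in E$. Since each $x_i$ lies in $E$, the enlarged ball $B_{2\delta h}(x_i)$ sits inside $U_E$ by definition, so applying Lemma~\ref{lemma:caccioppoli:V} on each $B_{\delta h}(x_i)$ and summing yields $\norm{D^2 u_{\slp,E}^\far}{L^2(\widetilde U_E)}^2 \le C h^{-2}\,\norm{\nabla u_{\slp,E}^\far}{L^2(U_E)}^2$. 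Substituting back, using $\widetilde U_E \subset U_E$ on the first summand, and multiplying by $h$ produces the claimed estimate with $\c{farfield}$ depending only on $\Gamma$ and $\kappa$.

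I expect the main obstacle to be the careful bookkeeping of the nested neighborhoods $E \subset \widetilde U_E \subset U_E$: the trace inequality has to be applied on the strictly smaller $\widetilde U_E$ so that the factor-two doubling inherent in the Caccioppoli bound of Lemma~\ref{lemma:caccioppoli:V} still lands inside $U_E$ and matches the right-hand side of~\eqref{eq:farfield:V}. A secondary technicality is verifying that the covering of $\widetilde U_E$ by balls of radius $\delta h$ centered in $E$ has cardinality bounded solely in terms of $\kappa$, which is routine but must be checked to absorb all geometric constants into the shape-regularity dependence claimed in the lemma.
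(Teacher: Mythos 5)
Your proposal is correct and follows essentially the same route as the paper: the tangential-projection argument for the first inequality, and for the second a covering by balls of radius $\delta h_\ell(E)$ centered at points of $E$ (with cardinality controlled by $\kappa$-shape regularity), a scaled trace inequality, and the Caccioppoli bound of Lemma~\ref{lemma:caccioppoli:V} on the doubled balls, which lie in $U_E$ precisely because the centers belong to $E$. The only cosmetic difference is that the paper applies the trace inequality ball by ball to $B_i\cap E$ (in the multiplicative form $\norm{v}{}\,\norm{\nabla v}{}$, equivalent to your additive form via Young's inequality) rather than once on the inner tube $\widetilde U_E$, which sidesteps your covering of the full tubular neighborhood.
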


\begin{proof}
By Lemma~\ref{lemma:caccioppoli:V} we have $u_{\slp,E}^\far \in C^\infty(U_E)$. 
The first estimate in \eqref{eq:farfield:V} follows from the fact
that, for smooth functions, the surface gradient $\nabla_\Gamma(\cdot)$ is the orthogonal projection 
of the gradient $\nabla(\cdot)$ onto the tangent plane, i.e.,
$\nabla_\Gamma \gamma_0^\interior u(x) = \nabla u(x) - \big(\nabla u(x)\cdot\normal(x)\big)\,\normal(x)$, see~\cite{verchota}.

\revision{To prove the second estimate in~\eqref{eq:farfield:V}}, we fix an $E \in \mesh_\ell$. First, we select $N$ points 
$x_j\in E$, $j=1,\ldots,N$,
such that 
$$
E \subset \bigcup_{j=1}^N B_{\delta h_\ell(E)}(x_j). 
$$
We may assume that the \revision{number $N$ of points} depends solely on 
the $\kappa$-shape regularity of the mesh. This follows by geometric
considerations as detailed in \cite[Lemma~{3.5}]{fkmp}; essentially, 
one may select the points $x_j$ from a regular grid with spacing
$\frac{1}{2}\delta h_\ell(E)$ and cover $E$ with balls of radii 
$\delta h_\ell(E)$ centered at these points. The centers outside $E$ 
that are required for the covering are then projected into $E$ 
to ensure that all points are in $E$. 

Next, let $B_i := B_{\delta h_\ell(E)}( x_i)$ and
  $\wat B_{i} := B_{2\delta h_\ell(E)}( x_{i})\subset U_E$.
  Using a standard trace inequality and the Caccioppoli inequality~\eqref{eq:caccioppoli:V}
  we infer for all indices $i$ that
\begin{align*}
\norm{\nabla u_{\slp,\el}^\far}{L^2(B_{i}\cap E)}^2
\lesssim \frac{1}{h_\ell(E)} \norm{\nabla u_{\slp,\el}^\far}{L^2(B_{i})}^2 +
\norm{\nabla u_{\slp,\el}^\far}{L^2(B_i)}\norm{D^2u_{\slp,\el}^\far}{L^2(B_{i})}
\lesssim \frac{1}{h_\ell(E)} \norm{\nabla u_{\slp,\el}^\far}{L^2(\wat B_{i})}^2 .
\end{align*}
We use the last estimate to get
\begin{align*}
\norm{\nabla_\Gamma \gamma_0^\interior u_\el^\far}{L^2(E)}^2
\le \norm{\nabla u_{\slp,\el}^\far}{L^2(E)}^2
\le \sum_{i=1}^N\norm{\nabla u_{\slp,\el}^\far}{L^2(B_{i}\cap E)}^2
&
\lesssim \frac{1}{h_\ell(E)}\,\sum_{i=1}^N\norm{\nabla u_{\slp,\el}^\far}{L^2(\wat B_{i})}^2
\\ &
\lesssim \frac{1}{h_\ell(E)}\, \norm{\nabla u_{\slp,\el}^\far}{L^2(U_E)}^2.
\end{align*}
  This concludes the proof of~\eqref{eq:farfield:V}.
\end{proof}

Summation of the elementwise estimates of Lemma~\ref{lemma:farfield:V} yields 
the following result:  
\begin{proposition}[Far field bound for $\widetilde\slp$]\label{prop:farfield:V}
  There is a constant $\c{farfield}>0$ depending only on $\Gamma$ and the $\kappa$-shape regularity 
of $\mesh_\ell$ such that 
\begin{align*}
\sum_{E\in\mesh_\ell}\norm{h_\ell^{1/2}\nabla_\Gamma \gamma_0^\interior u_{\slp,E}^\far}{L^2(E)}^2
\le \sum_{E\in\mesh_\ell}\norm{h_\ell^{1/2}\nabla u_{\slp,E}^\far}{L^2(E)}^2
\leq \c{farfield} \left(\norm{\psi}{H^{-1/2}(\Gamma)}^2 + \norm{h_\ell^{1/2}\psi}{L^2(\Gamma)}^2\right).
\end{align*}
\end{proposition}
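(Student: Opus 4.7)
The plan is to sum the elementwise bound from Lemma~\ref{lemma:farfield:V} over all $E\in\mesh_\ell$ and then to bound the resulting global quantity $\sum_{E\in\mesh_\ell}\|\nabla u_{\slp,E}^\far\|_{L^2(U_E)}^2$ by the two terms on the right-hand side.

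First, the pointwise inequality $|\nabla_\Gamma \gamma_0^\interior v|\le |\nabla v|$ for smooth $v$ and Lemma~\ref{lemma:farfield:V} give
\begin{align*}
\sum_{E\in\mesh_\ell}\norm{h_\ell^{1/2}\nabla_\Gamma \gamma_0^\interior u_{\slp,E}^\far}{L^2(E)}^2
\le \sum_{E\in\mesh_\ell}\norm{h_\ell^{1/2}\nabla u_{\slp,E}^\far}{L^2(E)}^2
\lesssim \sum_{E\in\mesh_\ell}\norm{\nabla u_{\slp,E}^\far}{L^2(U_E)}^2,
\end{align*}
with the hidden constant depending only on $\Gamma$ and $\kappa$-shape regularity.

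Next, I would use the splitting $u_{\slp,E}^\far = u_\slp - u_{\slp,E}^\near$, where $u_\slp = \widetilde\slp\psi$, to write
\begin{align*}
\sum_{E\in\mesh_\ell}\norm{\nabla u_{\slp,E}^\far}{L^2(U_E)}^2
\lesssim \sum_{E\in\mesh_\ell}\norm{\nabla u_\slp}{L^2(U_E)}^2
+ \sum_{E\in\mesh_\ell}\norm{\nabla u_{\slp,E}^\near}{L^2(U_E)}^2.
\end{align*}
The second sum is controlled by $\norm{h_\ell^{1/2}\psi}{L^2(\Gamma)}^2$ thanks to Proposition~\ref{prop:nearfield:V}. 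For the first sum, the finite-overlap property~\eqref{UE:overlap} together with $U_E\subset U$ (see~\eqref{def:U}) yields
\begin{align*}
\sum_{E\in\mesh_\ell}\norm{\nabla u_\slp}{L^2(U_E)}^2
\le M\,\norm{\nabla u_\slp}{L^2(U)}^2
\le M\,\norm{\widetilde\slp\psi}{H^1(U)}^2
\lesssim \norm{\psi}{H^{-1/2}(\Gamma)}^2,
\end{align*}
where the last estimate uses the continuity $\widetilde\slp\in L(H^{-1/2}(\Gamma);H^1(U))$ from~\eqref{eq:mapping-properties-potentials}. Adding the two contributions gives the claim.

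I do not expect a genuine obstacle here, since all the work has been done in the preceding lemmas; the only points to be a bit careful about are the application of the finite-overlap property in the correct direction (sum over $E$ of $L^2(U_E)$-norms bounded by the $L^2(U)$-norm) and the fact that the gradient of $u_\slp$ is well defined on the full neighborhood $U$, so that the mapping property of $\widetilde\slp$ into $H^1(U)$ can be applied directly.
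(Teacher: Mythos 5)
Your proposal is correct and follows essentially the same route as the paper's proof: summing the local far field bound of Lemma~\ref{lemma:farfield:V}, splitting $u_{\slp,E}^\far = \widetilde\slp\psi - u_{\slp,E}^\near$, invoking the near field bound of Proposition~\ref{prop:nearfield:V}, and using the finite overlap property~\eqref{UE:overlap} together with the mapping property~\eqref{eq:mapping-properties-potentials} of $\widetilde\slp$ for the remaining term. No gaps.
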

\begin{proof}
  We use the local far field bound~\eqref{eq:farfield:V}
  of Lemma~\ref{lemma:farfield:V} and $u_{\slp,E}^\far = \widetilde\slp\psi - u_{\slp,E}^\near$,
\begin{align}
\begin{split}
\label{eq2:invest}
&
\sum_{E\in\mesh_\ell}\norm{h_\ell^{1/2}\nabla_\Gamma \gamma_0^\interior u_{\slp,E}^\far}{L^2(E)}^2
\leq \sum_{E\in\mesh_\ell}\norm{h_\ell^{1/2}\nabla u_{\slp,E}^\far}{L^2(E)}^2
\lesssim \sum_{E\in\mesh_\ell} \norm{\nabla u_{\slp,E}^\far}{L^2(U_E)}^2
\\ & \quad
\lesssim \sum_{E\in\mesh_\ell} \norm{\nabla \widetilde\slp\psi}{L^2(U_E)}^2
+\sum_{E\in\mesh_\ell} \norm{\nabla u_{\slp,E}^\near}{L^2(U_E)}^2.
\end{split}
\end{align}
The first term on the right-hand side in~\eqref{eq2:invest} is estimated by stability of $\widetilde\slp$
and the finite overlap property~\eqref{UE:overlap}
  \begin{align*}
    \sum_{E\in\mesh_\ell}\norm{\nabla \widetilde\slp\psi}{L^2(U_E)}^2
    \lesssim \norm{\nabla \widetilde\slp\psi}{L^2(U)}^2
    \lesssim \norm{\psi}{H^{-1/2}(\Gamma)}^2.
  \end{align*}
  The second term in~\eqref{eq2:invest} is bounded with the aid of the near field bound~\eqref{eq1:nearfield:V}.
\end{proof}
\subsection{Far field and near field estimates for the double layer potential}
\label{sec:aux-dlp}
\def\NN{\mathcal N}
Section~\ref{section:invest:aux} studied the simple layer potential in detail. 
Corresponding results for the double layer potential are derived in the present 
section. 
\subsubsection{Decomposition into near field and far field}
We use the notation introduced in Section~\ref{sec:notation-slp}. 
Additionally, in order to define the near field and far field parts for 
the double-layer potential, we need an appropriate cut-off function:
Let $\NN_\ell$ denote the set of nodes of $\mesh_\ell$.
For any $E \in \mesh_\ell$ define
\begin{align}
\label{eq:cut-off-function}
 \eta_E := \sum_{z\in\NN_\ell\cap\omega_\ell(E)}\eta_z,
\end{align}
where $\eta_z\in\SS^1(\mesh_\ell)$ denotes the hat function associated with the boundary node $z$,
i.e., $\eta_z \in \SS^1(\mesh_\ell)$ is characterized by the condition
$\eta_z(z')=\delta_{zz'}$ for all $z'\in\NN_\ell$, 
\revision{%
where $\delta_{zz'}$ denotes the Kronecker delta. 
We use the abbreviation $\widehat\omega_\ell(E):=\omega_\ell(\omega_\ell(E)) := \bigcup\set{E'\in\mesh_\ell}{E'\cap\omega_\ell(E)\neq\emptyset}$
for the second order patch,
where we recall from \eqref{UE:patch} that $\omega_\ell(E)$ denotes the patch of $E\in\mesh_\ell$. 
Note that $\widehat\omega_\ell(E) = \supp(\eta_E)$ for the cut-off function $\eta_E$ of~\eqref{eq:cut-off-function}.
}
We note 
\begin{align}
\label{eta:properties}
 \eta_E|_{\Gamma\cap U_E} = 1,\quad
 \eta_E|_{\Gamma\backslash\omega_\ell(\omega_\ell(E))} = 0,\quad
 \norm{\eta_E}{L^\infty(\Gamma)} = 1,\quad
 \text{and}\quad
 \norm{\nabla_\Gamma\eta_E}{L^\infty(\Gamma)} \simeq h_\ell(E)^{-1},
\end{align}
where the constant involved in the last estimate depends only on the
$\kappa$-shape regularity of $\mesh_\ell$.
For the double-layer potential $u_\dlp = \widetilde\dlp v$ of a density $v\in H^1(\Gamma)$
we define the near field and the far field part by
\begin{align}\label{eq:c_E}
 u_{\dlp,E}^\near := \widetilde\dlp\big((v-c_E)\eta_E\big)
 \quad\text{and}\quad
 u_{\dlp,E}^\far := \widetilde\dlp\big((v-c_E)(1-\eta_E)\big),
\end{align}
where $c_E\in\R$ is a constant that will be specified below.
Since $\widetilde \dlp 1 \equiv -1$ in $\Omega$ and 
$\widetilde \dlp 1 \equiv 0$ in $\Omega^\e$, we have, for every $\el \in \mesh_\ell$,
the identities 
\begin{align}
\label{eq:addition of near and far field contributions of Ktilde}
\begin{split}
 u_\dlp + c_E &= u_{\dlp,E}^\near + u_{\dlp,E}^\far
 \quad\text{in }\Omega
\quad\text{and}\quad
 u_\dlp = u_{\dlp,E}^\near + u_{\dlp,E}^\far
 \quad\text{in }\Omega^\e.
\end{split}
\end{align}
\subsubsection{Inverse estimates for the near field part $u_{\dlp,E}^\near$}
The proof of the near field bound for the double-layer potential needs an appropriate
choice of the constants $c_E\in\R$ in~\eqref{eq:c_E}.

\begin{lemma}[Poincar\'e inequality on patches]
\label{lemma:poincare}
For given $w\in H^1(\Gamma)$ and $E\in\mesh_\ell$, there is a constant $c_E\in\R$ such that
\begin{align}
\label{est:poincare on patch - first est}
\norm{w-c_E}{L^2(\widehat\omega_\ell(E))}
&
\le \c{poincare} \norm{h_\ell\nabla_\Gamma w}{L^2(\widehat\omega_\ell(E))}, 
\\
\label{est:poincare on patch - second est}
\norm{(w-c_\el)\eta_\el}{H^{1/2}(\Gamma)}
&
\le \c{poincare} \norm{h_\ell^{1/2}\nabla_\Gamma w}{L^2(\widehat\omega_\ell(\el))},
\\
\label{est:poincare on patch - third est}
\norm{(w-c_\el)\eta_\el}{H^{1}(\Gamma)}
&\le \c{poincare} \norm{\nabla_\Gamma w}{L^2(\widehat\omega_\ell(\el))}.
\end{align}
The constant $\setc{poincare}>0$ depends only on the $\kappa$-shape regularity constant of $\mesh_\ell$ and on the surface measure of $\Gamma$.
\end{lemma}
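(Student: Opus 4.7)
The plan is to set $c_E$ equal to the mean value of $w$ on the second order patch $\widehat\omega_\ell(E)$,
$$
 c_E := \frac{1}{|\widehat\omega_\ell(E)|}\int_{\widehat\omega_\ell(E)} w\,d\Gamma,
$$
and then to derive the three estimates in the order \eqref{est:poincare on patch - first est}, \eqref{est:poincare on patch - third est}, \eqref{est:poincare on patch - second est}, since the last follows by interpolation once the first two are established. For \eqref{est:poincare on patch - first est}, I would argue by a standard scaling argument: By $\kappa$-shape regularity, $\widehat\omega_\ell(E)$ is the image of a reference patch of unit diameter under an affine (or piecewise affine, near corners of $\Gamma$) transformation with Jacobian $\simeq h_\ell(E)^{d-1}$, and up to affine equivalence there are only finitely many topological configurations of such patches as $E$ varies over $\mesh_\ell$. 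On each reference configuration, the classical Poincaré inequality for zero mean value functions applies with a uniform constant; scaling back introduces the factor $h_\ell(E)$, and shape regularity gives $h_\ell \simeq h_\ell(E)$ on $\widehat\omega_\ell(E)$, yielding \eqref{est:poincare on patch - first est}.

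For \eqref{est:poincare on patch - third est}, I would use the product rule and the support property $\supp(\eta_E)\subseteq \widehat\omega_\ell(E)$ together with \eqref{eta:properties}, so that
$$
 \nabla_\Gamma\big((w-c_E)\eta_E\big) = \eta_E\,\nabla_\Gamma w + (w-c_E)\,\nabla_\Gamma\eta_E.
$$
The first summand is bounded in $L^2(\Gamma)$ by $\|\nabla_\Gamma w\|_{L^2(\widehat\omega_\ell(E))}$ using $\|\eta_E\|_{L^\infty(\Gamma)}\le1$, while the second is bounded by $h_\ell(E)^{-1}\|w-c_E\|_{L^2(\widehat\omega_\ell(E))}$ using $\|\nabla_\Gamma\eta_E\|_{L^\infty(\Gamma)}\lesssim h_\ell(E)^{-1}$ and then controlled by the already proved \eqref{est:poincare on patch - first est}. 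The $L^2(\Gamma)$-part of the $H^1$-norm is handled by $\|(w-c_E)\eta_E\|_{L^2(\Gamma)}\le \|w-c_E\|_{L^2(\widehat\omega_\ell(E))}\lesssim h_\ell(E)\|\nabla_\Gamma w\|_{L^2(\widehat\omega_\ell(E))}\lesssim \diam(\Gamma)\|\nabla_\Gamma w\|_{L^2(\widehat\omega_\ell(E))}$.

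For \eqref{est:poincare on patch - second est}, I would use the standard interpolation estimate $\|u\|_{H^{1/2}(\Gamma)}^2 \lesssim \|u\|_{L^2(\Gamma)}\,\|u\|_{H^1(\Gamma)}$ with $u = (w-c_E)\eta_E$. Inserting the bound $\|u\|_{L^2(\Gamma)}\lesssim h_\ell(E)\|\nabla_\Gamma w\|_{L^2(\widehat\omega_\ell(E))}$ together with the $H^1$-bound from \eqref{est:poincare on patch - third est} gives $\|u\|_{H^{1/2}(\Gamma)}\lesssim h_\ell(E)^{1/2}\|\nabla_\Gamma w\|_{L^2(\widehat\omega_\ell(E))}$, and exchanging $h_\ell(E)^{1/2}$ for $h_\ell^{1/2}$ (which is equivalent on $\widehat\omega_\ell(E)$ by shape regularity) yields the claim.

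The only delicate point in this program is the uniform validity of the Poincaré inequality on the patches in \eqref{est:poincare on patch - first est}: patches touching corners or edges of the polyhedral surface $\Gamma$ could in principle degenerate, but since $\Gamma$ has only finitely many singular features and shape regularity limits how patches can meet them, this reduces to verifying Poincaré on a finite list of reference configurations. The remaining steps are then essentially an application of the product rule, the properties \eqref{eta:properties} of the cut-off $\eta_E$, and standard interpolation.
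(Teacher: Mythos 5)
Your proposal is correct and follows essentially the same route as the paper: choose $c_E$ as a (patch) mean value, prove \eqref{est:poincare on patch - first est} by a uniform Poincar\'e inequality on second-order patches, deduce \eqref{est:poincare on patch - third est} from the product rule and the properties \eqref{eta:properties} of $\eta_E$, and obtain \eqref{est:poincare on patch - second est} by the interpolation inequality $\norm{u}{H^{1/2}(\Gamma)}^2\lesssim\norm{u}{L^2(\Gamma)}\norm{u}{H^1(\Gamma)}$. The only difference is how the uniform patch Poincar\'e constant is justified: the paper assembles local Poincar\'e inequalities via \cite[Theorem~7.1]{ds80}, whereas your appeal to ``finitely many reference configurations up to affine equivalence'' is slightly imprecise (the metric shapes of shape-regular patches form a continuum, not a finite set), though the standard chaining or continuity-plus-compactness arguments close this gap without changing anything else.
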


\begin{proof}
The first estimate~\eqref{est:poincare on patch - first est} is established by assembling local Poincar\'e inequalities on $\widehat\omega_\ell(E)$ with the help of~\cite[Theorem~7.1]{ds80}.
The properties of the cut-off function $\eta_E$ detailed in~\eqref{eta:properties}, 
the estimate~\eqref{est:poincare on patch - first est}, and the product rule yield 
\begin{align*}
 \norm{\nabla_\Gamma\big((w-c_E)\eta_E\big)}{L^2(\Gamma)}
 \le \norm{(w-c_E)\nabla_\Gamma\eta_E}{L^2(\Gamma)}
 + \norm{\eta_E\nabla_\Gamma(w-c_E)}{L^2(\Gamma)}
 \lesssim\norm{\nabla_\Gamma w}{L^2(\widehat\omega_\ell(\el))},
\end{align*}
since $\widehat\omega_\ell(\el) = \supp(\eta_\el)$. Hence, we obtain 
with the trivial bound $h_\ell(E) \leq |\Gamma|^{1/(d-1)} \lesssim 1$
\begin{align*}
 \norm{(w-c_\el)\eta_\el}{H^{1}(\Gamma)}^2
 = \norm{(w-c_\el)\eta_\el}{L^2(\Gamma)}^2
 + \norm{\nabla_\Gamma\big((w-c_\el)\eta_\el\big)}{L^2(\Gamma)}^2
 \lesssim \norm{\nabla_\Gamma w}{L^2(\widehat\omega_\ell(\el))}^2.
\end{align*}
This proves~\eqref{est:poincare on patch - third est}.
It remains to verify~\eqref{est:poincare on patch - second est}. To that end, we recall the interpolation inequality
$\norm{\cdot}{H^{1/2}(\Gamma)}^2 \lesssim \norm{\cdot}{L^2(\Gamma)}\norm{\cdot}{H^1(\Gamma)}$
and note that $\norm{(w-c_E)\eta_E}{L^2(\Gamma)} \le \norm{w-c_E}{L^2(\widehat\omega_\ell(E))}$ to get 
\begin{align*}
 \norm{(w-c_\el)\eta_\el}{H^{1/2}(\Gamma)}
 &\lesssim \norm{(w-c_\el)\eta_\el}{L^2(\Gamma)}^{1/2}
 \norm{(w-c_\el)\eta_\el}{H^1(\Gamma)}^{1/2}
 \\&
 \lesssim \norm{h_\ell\nabla_\Gamma w}{L^2(\widehat\omega_\ell(\el))}^{1/2}
 \norm{\nabla_\Gamma w}{L^2(\widehat\omega_\ell(\el))}^{1/2}
 \\&
 \simeq \norm{h_\ell^{1/2}\nabla_\Gamma w}{L^2(\widehat\omega_\ell(\el))},
\end{align*}
where the last estimate hinges on $\kappa$-shape regularity of $\mesh_\ell$.
\end{proof}

The following lemma provides an estimate for the near field part of the double-layer potential.

\begin{proposition}[Near field bound for $\widetilde\dlp$]
\label{lemma:nearfield:K}
Let $v\in H^{1}(\Gamma)$ and consider $u_{\dlp,E}^\near$ defined by \eqref{eq:c_E} 
with the constant $c_E$ given by Lemma~\ref{lemma:poincare}. Then
$\gamma_0^\interior u_{\dlp,E}^\near\in H^1(\Gamma)$, $u_{\dlp,E}^\near|_\Omega\in H^1(\Omega)$,
and $u_{\dlp,E}^\near|_{U\setminus \overline{\Omega}}\in H^1(U \setminus \overline{\Omega})$  with
\begin{align}
\begin{split}
&\sum_{E\in\mesh_\ell}\Big(
\norm{h_\ell^{1/2}\nabla_\Gamma \gamma_0^\interior u_{\dlp,E}^\near}{L^2(E)}^2
+ \norm{\nabla u_{\dlp,E}^\near}{L^2(U_E\cap\Omega)}^2
+ \norm{\nabla u_{\dlp,E}^\near}{L^2(U_E\cap\Omega^\e)}^2
\Big)
\\ & \quad
\le \c{nearfield}\,\norm{h_\ell^{1/2}\nabla_\Gamma v}{L^2(\Gamma)}^2.
\end{split}
\end{align}
The constant $\c{nearfield}>0$ depends only on $\Gamma$
and the $\kappa$-shape regularity of $\mesh_\ell$. 
\end{proposition}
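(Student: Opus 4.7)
The plan is to mirror the structure of Proposition~\ref{prop:nearfield:V}, now using Lemma~\ref{lemma:poincare} to control the cut-off density $(v-c_E)\eta_E$ in $H^1(\Gamma)$ and $H^{1/2}(\Gamma)$, and combining this with the stability of $\dlp$ on $H^1(\Gamma)$ and of $\widetilde\dlp:H^{1/2}(\Gamma)\to H^1(U\setminus\Gamma)$. First I would verify the regularity claims: since $v\in H^1(\Gamma)$ and $\eta_E\in\SS^1(\mesh_\ell)$ is Lipschitz, the product $(v-c_E)\eta_E$ lies in $H^1(\Gamma)\subset H^{1/2}(\Gamma)$. The mapping property~\eqref{eq:mapping-properties-potentials} of $\widetilde\dlp$ then yields $u_{\dlp,E}^\near|_\Omega\in H^1(\Omega)$ and $u_{\dlp,E}^\near|_{U\setminus\overline\Omega}\in H^1(U\setminus\overline\Omega)$, while \eqref{def:dlp} with $s=1/2$ and $\gamma=\Gamma$ (combined with $\gamma_0^\interior\widetilde\dlp=\dlp-\tfrac12$) shows $\gamma_0^\interior u_{\dlp,E}^\near\in H^1(\Gamma)$.

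For the surface-gradient term I would combine the $H^1(\Gamma)$-stability of $\dlp$ with the third estimate of Lemma~\ref{lemma:poincare} to obtain
\begin{align*}
\norm{\nabla_\Gamma \gamma_0^\interior u_{\dlp,E}^\near}{L^2(E)}
\le \norm{(\dlp-\tfrac12)((v-c_E)\eta_E)}{H^1(\Gamma)}
\lesssim \norm{(v-c_E)\eta_E}{H^1(\Gamma)}
\lesssim \norm{\nabla_\Gamma v}{L^2(\widehat\omega_\ell(E))}.
\end{align*}
Multiplying by $h_\ell(E)$, squaring, and summing over $E\in\mesh_\ell$, the locally finite overlap of the second-order patches $\widehat\omega_\ell(E)$ (a direct consequence of $\kappa$-shape regularity) delivers the bound by $\norm{h_\ell^{1/2}\nabla_\Gamma v}{L^2(\Gamma)}^2$.

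For the two volume-gradient terms I would use stability of $\widetilde\dlp:H^{1/2}(\Gamma)\to H^1(U\setminus\Gamma)$ together with the second estimate of Lemma~\ref{lemma:poincare}, which is designed to supply precisely the missing $h_\ell^{1/2}$-factor. Since $U_E\subseteq U$ by~\eqref{def:U},
\begin{align*}
\norm{\nabla u_{\dlp,E}^\near}{L^2(U_E\cap\Omega)}^2
+\norm{\nabla u_{\dlp,E}^\near}{L^2(U_E\cap\Omega^\e)}^2
&\le \norm{\nabla u_{\dlp,E}^\near}{L^2(U\setminus\Gamma)}^2 \\
&\lesssim \norm{(v-c_E)\eta_E}{H^{1/2}(\Gamma)}^2
\lesssim \norm{h_\ell^{1/2}\nabla_\Gamma v}{L^2(\widehat\omega_\ell(E))}^2,
\end{align*}
and summation with the same overlap argument finishes the estimate. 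The only mild obstacle is matching the two scales: the $\widetilde\dlp$-bound naturally produces an $H^{1/2}(\Gamma)$-norm whereas the $\dlp$-bound produces an $H^1(\Gamma)$-norm, yet both must land on the common right-hand side $\norm{h_\ell^{1/2}\nabla_\Gamma v}{L^2(\Gamma)}^2$; this is exactly what the interpolation argument built into Lemma~\ref{lemma:poincare} delivers, and the remainder is book-keeping via linearity of $\widetilde\dlp$, the Lipschitz character of $\eta_E$, and the $\kappa$-dependent overlap constant.
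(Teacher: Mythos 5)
Your proposal is correct and follows essentially the same route as the paper: $H^1(\Gamma)$-stability of $\dlp-\frac12$ combined with estimate~\eqref{est:poincare on patch - third est} for the surface-gradient term, and stability of $\widetilde\dlp:H^{1/2}(\Gamma)\to H^1(U\setminus\Gamma)$ combined with~\eqref{est:poincare on patch - second est} for the volume terms, followed by summation using the finite overlap of the second-order patches.
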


\begin{proof}
  First, the trace of the double-layer potential $\trace\widetilde\dlp = \dlp-\frac{1}{2}: H^1(\Gamma)\rightarrow H^1(\Gamma)$ is continuous, \eqref{def:dlp}. 
  Taking into account~\eqref{eta:properties}
  and the Poincar\'e-type estimate~\eqref{est:poincare on patch - third est}, we observe
\begin{align*}
\norm{\nabla_\Gamma \gamma_0^\interior u_{\dlp,\el}^\near}{L^2(\el)}
&
\leq \norm{\nabla_\Gamma \gamma_0^\interior u_{\dlp,\el}^\near}{L^2(\Gamma)}
\lesssim \norm{(v-c_\el)\eta_\el}{H^1(\widehat\omega_\ell(\el))}
\lesssim \norm{\nabla_\Gamma v}{L^2(\widehat\omega_\ell(E))}.
\end{align*}
  Summation over all $\el\in\mesh_\ell$ shows
\begin{align}\label{eq1:nearfield:K}
\sum_{E\in\mesh_\ell}\norm{h_\ell^{1/2}\nabla_\Gamma \gamma_0^\interior u_{\dlp,E}^\near}{L^2(E)}^2
\lesssim \norm{h_\ell^{1/2}\nabla_\Gamma v}{L^2(\Gamma)}^2.
\end{align}
Second, we use continuity of $\widetilde\dlp: H^{1/2}(\Gamma) \to H^1(U\setminus\Gamma)$ of 
\eqref{eq:mapping-properties-potentials} and get 
\begin{align*}
\norm{\nabla u_{\dlp,\el}^\near}{L^2(U_\el\cap\Omega)}^2 +
\norm{\nabla u_{\dlp,\el}^\near}{L^2(U_\el\cap\Omega^\e)}^2
\lesssim \norm{(v-c_\el)\eta_\el}{H^{1/2}(\Gamma)}^2
\lesssim \norm{h_\ell^{1/2}\nabla_\Gamma v}{L^2(\widehat\omega_\ell(\el))}^2,
  \end{align*}
where we have used~\eqref{est:poincare on patch - second est} in the last step.
  Summation over all $\el\in\mesh_\ell$ gives
\begin{align}\label{eq2:nearfield:K}
\sum_{E\in\mesh_\ell}\Big(\norm{\nabla u_{\dlp,E}^\near}{L^2(U_E\cap\Omega)}^2
  + \norm{\nabla u_{\dlp,E}^\near&}{L^2(U_E\cap\Omega^\e)}^2
  \Big)
  \lesssim \norm{h_\ell^{1/2}\nabla_\Gamma v}{L^2(\Gamma)}^2.
  \end{align}
  Combining~\eqref{eq1:nearfield:K}--\eqref{eq2:nearfield:K}, we conclude the proof.
\end{proof}
\subsubsection{Estimates for the far field part $u_{\dlp,\el}^\far$}
As for the simple-layer potential, we have a Caccioppoli inequality for the double-layer potential, 
which underlies the analysis of the far field contribution. 

\begin{lemma}[Caccioppoli inequality for $u_{\dlp,E}^\far$]
\label{lemma:caccioppoli:K}
For the constant $\c{caccioppoli}$ of Lemma~\ref{lemma:caccioppoli:V} 
the functions $u_{\dlp,E}^\far$ of \eqref{eq:c_E} satisfy
  $u_{\dlp,E}^\far|_\Omega \in C^\infty(\Omega)$,
  $u_{\dlp,E}^\far|_{\Omega^\e} \in C^\infty(\Omega^\e)$, and 
  $u_{\dlp,E}^\far|_{U_E}\in C^\infty(U_E)$ with 
  \begin{align}\label{eq:caccioppoli:K}
    \norm{D^2u_{\dlp,E}^\far}{L^2(B_{\delta h_\ell(E)}( x))}
     \le \c{caccioppoli}\,\frac{1}{h_\ell(E)}\,\norm{\nabla u_{\dlp,E}^\far}{L^2(B_{2\delta h_\ell(E)}( x))}
     \quad\text{for all } x\in E\in\mesh_\ell.
  \end{align}
\end{lemma}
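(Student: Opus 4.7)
The proof will closely parallel that of Lemma~\ref{lemma:caccioppoli:V}, with the key additional ingredient being the carefully chosen cut-off $\eta_E$. The plan is to first establish that $u_{\dlp,E}^\far$ is smooth across $\Gamma\cap U_E$, then apply the same interior regularity estimate from Morrey.

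For smoothness on $\Omega$ and $\Omega^\e$, the potential $\widetilde\dlp$ applied to any $H^{1/2}(\Gamma)$-density yields a $C^\infty$ function on each of these open sets, as recorded in~\cite{ss}. The crux is smoothness across $\Gamma\cap U_E$. The jump relations for the double-layer potential state that, with density $\varphi_E:=(v-c_E)(1-\eta_E) \in H^{1/2}(\Gamma)$, one has $[u_{\dlp,E}^\far]=\varphi_E$ and $[\gamma_1 u_{\dlp,E}^\far]=0$. By property~\eqref{eta:properties}, namely $\eta_E|_{\Gamma\cap U_E}=1$, we have $\varphi_E|_{\Gamma\cap U_E}=0$. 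Thus both the jump of the trace and the jump of the conormal derivative vanish across $\Gamma\cap U_E$. Integrating by parts twice against an arbitrary $\varphi\in C_0^\infty(U_E)$ then shows that $\langle u_{\dlp,E}^\far,-\Delta\varphi\rangle=0$, i.e., $u_{\dlp,E}^\far$ is distributionally harmonic in $U_E$. Weyl's lemma (\cite[Theorem~{2.3.1}]{morrey66}) gives $u_{\dlp,E}^\far\in C^\infty(U_E)$.

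For the Caccioppoli inequality, I would invoke the same interior regularity bound~\eqref{est:inverse_est_cf_Morrey} from~\cite[Lemma~{5.7.1}]{morrey66}, applied to the shifted function $u_{\dlp,E}^\far-c$ with $c:=|B_{2\delta h_\ell(E)}(x)|^{-1}\int_{B_{2\delta h_\ell(E)}(x)}u_{\dlp,E}^\far(y)\,dy$, the right-hand side $f=0$, radius $r=\delta h_\ell(E)$, and increment $h=\delta h_\ell(E)$. Since $D^2(u_{\dlp,E}^\far-c)=D^2 u_{\dlp,E}^\far$ and $\nabla(u_{\dlp,E}^\far-c)=\nabla u_{\dlp,E}^\far$, the estimate~\eqref{est:inverse_est_cf_Morrey} yields
\begin{align*}
\norm{D^2 u_{\dlp,E}^\far}{L^2(B_{\delta h_\ell(E)}(x))}
&\lesssim \frac{1}{h_\ell(E)}\norm{\nabla u_{\dlp,E}^\far}{L^2(B_{2\delta h_\ell(E)}(x))}
+ \frac{1}{h_\ell(E)^2}\norm{u_{\dlp,E}^\far-c}{L^2(B_{2\delta h_\ell(E)}(x))}.
\end{align*}
A Poincaré inequality on $B_{2\delta h_\ell(E)}(x)$ (with our choice of $c$ as its mean) absorbs the last term into a constant times $h_\ell(E)^{-1}\|\nabla u_{\dlp,E}^\far\|_{L^2(B_{2\delta h_\ell(E)}(x))}$, yielding~\eqref{eq:caccioppoli:K} with the same constant $\c{caccioppoli}$ as in Lemma~\ref{lemma:caccioppoli:V}.

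No step here is genuinely an obstacle, since the hard analytical work (the Morrey interior regularity bound and Weyl's lemma) is imported from the simple-layer case. The only place the argument really differs from that of Lemma~\ref{lemma:caccioppoli:V} is the verification that the density defining the far-field of $\widetilde\dlp$ vanishes on $\Gamma\cap U_E$; this is precisely why $\eta_E$ was constructed in~\eqref{eq:cut-off-function} and~\eqref{eta:properties} to equal one on $\Gamma\cap U_E$. Once this is recognized, the jump-cancellation argument proceeds verbatim, and the Caccioppoli estimate follows without further effort.
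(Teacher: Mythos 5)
Your proposal is correct and follows essentially the same route as the paper: identify the transmission problem solved by $u_{\dlp,E}^\far$, note that the jump of the trace equals $(v-c_E)(1-\eta_E)$, which vanishes on $\Gamma\cap U_E$ because $\eta_E\equiv 1$ there, and then repeat verbatim the Weyl-lemma and Morrey interior-regularity argument from Lemma~\ref{lemma:caccioppoli:V}. The paper's proof is just a terser version of yours, deferring all details to the simple-layer case.
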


\begin{proof}
  The proof is very similar to that of Lemma~\ref{lemma:caccioppoli:V}. One observes 
  that the far field $u_{\dlp,E}^\far$ solves the transmission problem
  \begin{align*}
  \begin{array}{rcll}
    -\Delta u_{\dlp,E}^\far &=& 0 \quad &\text{a.e. in } \Omega\cup\Omega^\e\\{}
    [u_{\dlp,E}^\far] &=& (v-c_E)(1-\eta_E) &\text{in } H^{1/2}(\Gamma)\\{}
    [\gamma_1 u_{\dlp,E}^\far] &=& 0 &\text{in }H^{-1/2}(\Gamma).
  \end{array}
  \end{align*}
  We note that $(1-\eta_E)|_{\Gamma\cap U_E}=0$ by construction of $\eta_E$ 
  in~\eqref{eq:cut-off-function}. Hence, the same reasoning as in the proof of 
  Lemma~\ref{lemma:caccioppoli:V} can be done to reach the conclusion 
  \eqref{eq:caccioppoli:K}. 
\end{proof}
\begin{lemma}[Local far field bound for $\widetilde\dlp$]\label{lemma:farfield:K}
  For all $E\in\mesh_\ell$ there holds 
\begin{align}
\label{eq:farfield:K}
\norm{h_\ell^{1/2}\nabla_\Gamma \gamma_0^\interior u_{\dlp,E}^\far}{L^2(E)}
\le \norm{h_\ell^{1/2}\nabla u_{\dlp,E}^\far}{L^2(E)}
\le \c{farfield}\,\norm{\nabla u_{\dlp,E}^\far}{L^2(U_E)}. 
  \end{align}
  The constant $\setc{farfield}>0$ depends only on $\Gamma$ and the $\kappa$-shape regularity constant of $\mesh_\ell$.
\end{lemma}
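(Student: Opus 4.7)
The plan is to imitate the proof of Lemma~\ref{lemma:farfield:V} almost verbatim, using the double-layer Caccioppoli inequality in place of its single-layer counterpart. Concretely, Lemma~\ref{lemma:caccioppoli:K} ensures that $u_{\dlp,E}^\far \in C^\infty(U_E)$, so on the portion of $\Gamma$ that lies inside $U_E$ (which contains $E$, cf.~\eqref{UE:patch}), the surface gradient is the orthogonal projection of the Euclidean gradient onto the tangent plane,
\[
  \nabla_\Gamma \gamma_0^\interior u_{\dlp,E}^\far(x)
   = \nabla u_{\dlp,E}^\far(x) - \big(\nabla u_{\dlp,E}^\far(x)\cdot\normal(x)\big)\normal(x),
\]
which gives the first inequality in \eqref{eq:farfield:K} at once.

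For the second inequality, I would select $N$ points $x_1,\dots,x_N\in E$, with $N$ depending only on the $\kappa$-shape regularity, such that $E\subset \bigcup_{j=1}^N B_{\delta h_\ell(E)}(x_j)$; this is exactly the same geometric covering argument used in the proof of Lemma~\ref{lemma:farfield:V} (see \cite[Lemma~3.5]{fkmp}). Setting $B_i := B_{\delta h_\ell(E)}(x_i)$ and $\widehat B_i := B_{2\delta h_\ell(E)}(x_i) \subset U_E$, a standard trace inequality on $B_i$ gives
\[
  \norm{\nabla u_{\dlp,E}^\far}{L^2(B_i\cap E)}^2
  \lesssim \frac{1}{h_\ell(E)} \norm{\nabla u_{\dlp,E}^\far}{L^2(B_i)}^2
    + \norm{\nabla u_{\dlp,E}^\far}{L^2(B_i)}\,\norm{D^2 u_{\dlp,E}^\far}{L^2(B_i)}.
\]
Invoking the Caccioppoli bound \eqref{eq:caccioppoli:K} to control $\norm{D^2 u_{\dlp,E}^\far}{L^2(B_i)}$ by $h_\ell(E)^{-1}\norm{\nabla u_{\dlp,E}^\far}{L^2(\widehat B_i)}$ absorbs the product term and yields
\[
  \norm{\nabla u_{\dlp,E}^\far}{L^2(B_i\cap E)}^2
  \lesssim \frac{1}{h_\ell(E)}\,\norm{\nabla u_{\dlp,E}^\far}{L^2(\widehat B_i)}^2.
\]

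Summing this estimate over $i=1,\dots,N$, using that all $\widehat B_i$ lie in $U_E$ and overlap at most a number of times controlled by $\kappa$-shape regularity, multiplying by $h_\ell(E)$, and taking a square root produces the second inequality in \eqref{eq:farfield:K}. I do not anticipate any genuine obstacle: the only substantive input beyond the single-layer argument is the Caccioppoli inequality for $u_{\dlp,E}^\far$, which is already provided by Lemma~\ref{lemma:caccioppoli:K}; the covering construction, trace inequality, and summation carry over unchanged because they use only properties of $\nabla u_{\dlp,E}^\far$ and $D^2 u_{\dlp,E}^\far$ on $U_E$ and the $\kappa$-shape regularity of $\mesh_\ell$.
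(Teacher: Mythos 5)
Your proposal is correct and coincides with the paper's argument: the paper proves Lemma~\ref{lemma:farfield:K} precisely by repeating the proof of Lemma~\ref{lemma:farfield:V} verbatim, with the Caccioppoli inequality \eqref{eq:caccioppoli:K} for $u_{\dlp,E}^\far$ replacing \eqref{eq:caccioppoli:V}. The covering of $E$ by balls, the trace inequality, and the summation are exactly the steps used there, so nothing further is needed.
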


\begin{proof}
The lemma is shown in exactly the same way as the corresponding bound for 
the simple layer potential $\slp$ in Lemma~\ref{lemma:farfield:V} appealing
to the Caccioppoli inequality \eqref{eq:caccioppoli:K} instead of \eqref{eq:caccioppoli:V}. 
\end{proof}
\begin{proposition}[Far field bound for $\widetilde\dlp$]\label{prop:farfield:K}
  There holds
\begin{align*}
\sum_{\el\in\mesh_\ell}\norm{h_\ell^{1/2}\nabla_\Gamma \gamma_0^\interior u_{\dlp,\el}^\far}{L^2(\el)}^2
\leq \sum_{\el\in\mesh_\ell}\norm{h_\ell^{1/2}\nabla u_{\dlp,\el}^\far}{L^2(\el)}^2
\leq \c{farfield} \left(\norm{h_\ell^{1/2}\nabla_\Gamma v}{L^2(\Gamma)}^2
+ \norm{v}{H^{1/2}(\Gamma)}^2\right).
  \end{align*}
  The constant $\c{farfield}>0$ depends only on $\Gamma$ and the $\kappa$-shape regularity constant
of $\mesh_\ell$.
\end{proposition}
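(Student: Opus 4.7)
The proof plan is to mirror the proof of Proposition~\ref{prop:farfield:V} (far field bound for $\widetilde\slp$) with $\widetilde\slp$ replaced by $\widetilde\dlp$ throughout. The three main substitutions are: the elementwise bound from Lemma~\ref{lemma:farfield:V} is replaced by Lemma~\ref{lemma:farfield:K}; the near field bound from Proposition~\ref{prop:nearfield:V} by Proposition~\ref{lemma:nearfield:K}; and the mapping property $\widetilde\slp\in L(H^{-1/2}(\Gamma);H^1(U))$ by $\widetilde\dlp\in L(H^{1/2}(\Gamma);H^1(U\setminus\Gamma))$ from~\eqref{eq:mapping-properties-potentials}.

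Concretely, I would sum the local far field bound of Lemma~\ref{lemma:farfield:K} over all elements $E\in\mesh_\ell$, which immediately gives the first claimed inequality and reduces the remaining task to bounding $\sum_{E\in\mesh_\ell}\norm{\nabla u_{\dlp,E}^\far}{L^2(U_E)}^2$ by the right-hand side of the proposition. Using the decomposition~\eqref{eq:addition of near and far field contributions of Ktilde} separately on the two pieces $U_E\cap\Omega$ and $U_E\cap\Omega^\e$, the additive constants $c_E$ vanish under the gradient, yielding $\nabla u_{\dlp,E}^\far = \nabla u_\dlp - \nabla u_{\dlp,E}^\near$ on both sides of $\Gamma$. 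A triangle inequality then splits the sum into one involving $u_\dlp=\widetilde\dlp v$ directly and one involving the near field parts.

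The full-potential contribution is controlled by $\norm{v}{H^{1/2}(\Gamma)}^2$ via the finite overlap property~\eqref{UE:overlap} combined with the stability~\eqref{eq:mapping-properties-potentials} of $\widetilde\dlp$ from $H^{1/2}(\Gamma)$ to $H^1(U\setminus\Gamma)$, while the near field contribution is directly bounded by $\norm{h_\ell^{1/2}\nabla_\Gamma v}{L^2(\Gamma)}^2$ through Proposition~\ref{lemma:nearfield:K}. The only subtlety compared to the simple-layer case is that one must split $U_E$ across $\Gamma$, because $\widetilde\dlp$ is only stable on $U\setminus\Gamma$ rather than all of $U$; this, however, is not a real obstacle since the additive constants in~\eqref{eq:addition of near and far field contributions of Ktilde} do not affect the gradient, the integrals over $U_E$ can be split since $u_{\dlp,E}^\far\in C^\infty(U_E)$ by Lemma~\ref{lemma:caccioppoli:K}, and all required ingredients are already in place.
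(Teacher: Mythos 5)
Your proposal is correct and follows essentially the same route as the paper: summation of Lemma~\ref{lemma:farfield:K}, the splitting of $U_E$ into $U_E\cap\Omega$ and $U_E\cap\Omega^\e$, the identity~\eqref{eq:addition of near and far field contributions of Ktilde} with the observation that the constants $c_E$ drop out under the gradient, stability of $\widetilde\dlp$ together with the finite overlap property~\eqref{UE:overlap} for the full-potential part, and Proposition~\ref{lemma:nearfield:K} for the near field part. The subtlety you flag about $\widetilde\dlp$ being stable only into $H^1(U\setminus\Gamma)$ is handled in the paper exactly as you describe.
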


\begin{proof}
We have by Lemma~\ref{lemma:farfield:K} 
\begin{align}
\label{prop:invest:K:eq:2}
\begin{split}
\sum_{\el\in\mesh_\ell}\norm{h_\ell^{1/2}\nabla_\Gamma \gamma_0^\interior u_{\dlp,\el}^\far}{L^2(\el)}^2
&
\leq \sum_{\el\in\mesh_\ell}\norm{h_\ell^{1/2}\nabla u_{\dlp,\el}^\far}{L^2(\el)}^2
\lesssim \sum_{\el\in\mesh_\ell}\norm{\nabla u_{\dlp,\el}^\far}{L^2(U_\el)}^2
\\ &
= \sum_{\el\in\mesh_\ell}\norm{\nabla u_{\dlp,\el}^\far}{L^2(U_\el\cap\Omega)}^2
+ \sum_{\el\in\mesh_\ell}\norm{\nabla u_{\dlp,\el}^\far}{L^2(U_\el\cap\Omega^\e)}^2.
\end{split}
\end{align}
With
the properties in \eqref{eq:addition of near and far field contributions of Ktilde}
 and a triangle inequality we get
\begin{align}
\begin{split}
\sum_{\el\in\mesh_\ell}\norm{h_\ell^{1/2}\nabla u_{\dlp,\el}^\far}{L^2(\el)}^2
\lesssim \sum_{\el\in\mesh_\ell}
&
\Bigl( \norm{\nabla \widetilde\dlp(v-c_\el)}{L^2(U_\el\cap\Omega)}^2
+ \norm{\nabla \widetilde\dlp v}{L^2(U_\el\cap\Omega^\e)}^2\Bigr)
\\ &
+ \sum_{\el\in\mesh_\ell}\Bigl( \norm{\nabla u_{\dlp,\el}^\near}{L^2(U_\el\cap\Omega)}^2
+ \norm{\nabla u_{\dlp,\el}^\near}{L^2(U_\el\cap\Omega^\e)}^2 \Bigr).
\end{split}
\end{align}
The near field contribution is bounded by Proposition~\ref{lemma:nearfield:K}.
Furthermore, noting $\nabla\widetilde\dlp c_\el = \nabla(-c_\el)=0$ in $\Omega$, we get 
\begin{align*}
\sum_{\el\in\mesh_\ell}\norm{h_\ell^{1/2}\nabla u_{\dlp,\el}^\far}{L^2(\el)}^2
&
\lesssim \sum_{\el\in\mesh_\ell}
\Bigl( \norm{\nabla \widetilde\dlp v}{L^2(U_\el\cap\Omega)}^2
+ \norm{\nabla \widetilde\dlp v}{L^2(U_\el\cap\Omega^\e)}^2 \Bigr)
+ \norm{h_\ell^{1/2}\nabla_\Gamma v}{L^2(\Gamma)}^2
\\ &
\lesssim \norm{v}{H^{1/2}(\Gamma)}^2
+ \norm{h_\ell^{1/2}\nabla_\Gamma v}{L^2(\Gamma)}^2,
\end{align*}
where we have used continuity of $\widetilde\dlp$.
\end{proof}
\subsection{Proof of Theorem~\ref{thm:invest} for $\boldsymbol{\gamma=\Gamma}$}
We are now in position to prove the inverse estimates~\eqref{eq:invest:V}--\eqref{eq:invest:W} of Theorem~\ref{thm:invest}.

\begin{proof}[Proof of the inverse estimate~\eqref{eq:invest:V} for the simple-layer potential $\slp$ and $\gamma=\Gamma$]
Let $\psi \in L^2(\Gamma)$.
Then,
\begin{align}\label{eq1:invest}
\begin{split}
\norm{h^{1/2}\nabla_\Gamma \slp\psi}{L^2(\Gamma)}^2
&
= \sum_{E\in\mesh_\ell} \norm{h_\ell^{1/2}\nabla_\Gamma \slp\psi}{L^2(E)}^2
\\ &
\lesssim \sum_{E\in\mesh_\ell}\norm{h_\ell^{1/2}\nabla_\Gamma \gamma_0^\interior u_{\slp,\el}^\far}{L^2(E)}^2
+ \sum_{E\in\mesh_\ell}\norm{h_\ell^{1/2}\nabla_\Gamma \gamma_0^\interior u_{\slp,\el}^\near}{L^2(E)}^2.
\end{split}
\end{align}
Both sums on the right-hand side can be estimated with the bounds of 
Propositions~\ref{prop:nearfield:V} and~\ref{prop:farfield:V}.
This yields
\begin{align*}
\norm{h_\ell^{1/2}\nabla_\Gamma \slp\psi}{L^2(\Gamma)}
\lesssim \norm{\psi}{H^{-1/2}(\Gamma)}
+ \norm{h_\ell^{1/2}\psi}{L^2(\Gamma)}.
\end{align*}
and concludes the proof.
\end{proof}

\begin{proof}[Proof of the inverse estimate~\eqref{eq:invest:Kadj} for the adjoint double-layer potential $\dlp'$ and $\gamma=\Gamma$]
Let $\psi \in L^2(\Gamma)$.
We split the left-hand side into near field and far field contributions to obtain
\begin{align}
\label{eq:invest:Kadj:1}
\norm{h_\ell^{1/2}\dlp'\psi}{L^2(\Gamma)}^2
\lesssim \sum_{\el\in\mesh_\ell}h_\ell(\el)\norm{\dlp'(\psi\chi_{U_\el\cap\Gamma})}{L^2(\el)}^2 +
\sum_{\el\in\mesh_\ell}h_\ell(\el)\norm{\dlp'(\psi\chi_{\Gamma\setminus U_\el})}{L^2(\el)}^2.
\end{align}
The continuity $\dlp':L^2(\Gamma)\rightarrow L^2(\Gamma)$ stated in \eqref{def:adlp}  yields 
for the near field contribution
\begin{align*}
\sum_{\el\in\mesh_\ell}h_\ell(\el)\norm{\dlp'(\psi\chi_{U_\el\cap\Gamma})}{L^2(\el)}^2
\leq \sum_{\el\in\mesh_\ell}h_\ell(\el)\norm{\dlp'(\psi\chi_{U_\el\cap\Gamma})}{L^2(\Gamma)}^2
&
\lesssim \sum_{\el\in\mesh_\ell}h_\ell(\el)\norm{\psi}{L^2(U_\el\cap\Gamma)}^2
\\ &
\lesssim \norm{h_\ell^{1/2}\psi}{L^2(\Gamma)}^2.
\end{align*}
For the far field contribution, we write 
$u_{\slp,E}^\far  = \widetilde \slp (\psi \chi_{\Gamma \setminus U_E})$
and note that $\dlp' = -1/2 + \gint \widetilde\slp$
and clearly $(\psi\chi_{\Gamma\backslash U_E})|_E=0$. Therefore, 
on $E$ we have $\dlp' (\psi \chi_{\Gamma\setminus U_E}) = \gint u_{\slp,E}^\far$. 
Furthermore, by the smoothness of $u_{\slp,E}^\far$ near $E$ (see 
Lemma~\ref{lemma:caccioppoli:V}) we have 
$\gint u_{\slp,E}^\far = \partial_{\normal} u_{\slp,E}^\far$ on $E$
(cf. Remark~\ref{rem:gamma_1})
and get 
\begin{align*}
\norm{\dlp'(\psi\chi_{\Gamma\setminus U_\el})}{L^2(\el)}
= \norm{\gint u_{\slp,\el}^\far}{L^2(\el)}
= \norm{\partial_\normal u_{\slp,\el}^\far}{L^2(\el)}
\lesssim \norm{\nabla u_{\slp,\el}^\far}{L^2(\el)}.
\end{align*}
The far field contribution in~\eqref{eq:invest:Kadj:1}
can therefore be bounded by Proposition~\ref{prop:farfield:V}
\begin{align*}
\sum_{\el\in\mesh_\ell}h_\ell(\el)\norm{\dlp'(\psi\chi_{\Gamma\setminus U_\el})}{L^2(\el)}^2
\lesssim \sum_{\el\in\mesh_\ell}\norm{h_\ell^{1/2}\nabla u_{\slp,\el}^\far}{L^2(\el)}^2
\lesssim \norm{h_\ell^{1/2}\psi}{L^2(\Gamma)}^2 + \norm{\psi}{H^{-1/2}(\Gamma)}^2.
\end{align*}
Altogether, this gives
\begin{align*}
\norm{h_\ell^{1/2}\dlp'\psi}{L^2(\Gamma)}
\lesssim \norm{h_\ell^{1/2}\psi}{L^2(\Gamma)} + \norm{\psi}{H^{-1/2}(\Gamma)}
\end{align*}
and concludes the proof.
\end{proof}

\begin{proof}[Proof of inverse estimate~\eqref{eq:invest:K} for the double-layer potential $\dlp$ and $\gamma=\Gamma$]
Let $v \in H^1(\Gamma)$.
We recall the stability of $\dlp = \frac{1}{2}+\trace\widetilde\dlp: H^1(\Gamma)\rightarrow H^1(\Gamma)$, from
which we conclude $\trace\widetilde\dlp v\in H^1(\Gamma)$.
Therefore,
\begin{align}
\label{prop:invest:K:eq:0}
\begin{split}
\norm{h_\ell^{1/2}\nabla_\Gamma \dlp v}{L^2(\Gamma)}
&
= \norm{h_\ell^{1/2}\nabla_\Gamma \big(\mbox{$\frac{1}{2}$}+\trace\widetilde\dlp\big) v}{L^2(\Gamma)}
\leq \frac{1}{2}\norm{h_\ell^{1/2}\nabla_\Gamma v}{L^2(\Gamma)}
+ \norm{h_\ell^{1/2}\nabla_\Gamma \gamma_0^\interior u_\dlp}{L^2(\Gamma)}
\end{split}
\end{align}
with $u_\dlp = \widetilde\dlp v$.
There holds $u_\dlp +c_\el = u_{\dlp,\el}^\near + u_{\dlp,\el}^\far$ in $\Omega$,
cf.~\eqref{eq:addition of near and far field contributions of Ktilde}.
For the second term on the right-hand side in~\eqref{prop:invest:K:eq:0}, we obtain with the constants $c_E$ of~Lemma~\ref{lemma:poincare} that
\begin{align}\label{prop:invest:K:eq:1}
\begin{split}
\norm{h_\ell^{1/2}\nabla_\Gamma \gamma_0^\interior u_\dlp }{L^2(\Gamma)}^2
&
= \sum_{\el\in\mesh_\ell}h_\ell(\el)\norm{\nabla_\Gamma \gamma_0^\interior (u_\dlp +c_\el)}{L^2(\el)}^2
\\ &
\lesssim \sum_{\el\in\mesh_\ell}h_\ell(\el)\norm{\nabla_\Gamma \gamma_0^\interior u_{\dlp,\el}^\near}{L^2(\el)}^2
+ \sum_{\el\in\mesh_\ell}h_\ell(\el)\norm{\nabla_\Gamma \gamma_0^\interior u_{\dlp,\el}^\far}{L^2(\el)}^2.
\end{split}
\end{align}
The first sum can be bounded by Proposition~\ref{lemma:nearfield:K},
whereas the second sum can be bounded by Proposition~\ref{prop:farfield:K}.
Altogether, this yields
\begin{align*}
\norm{h_\ell^{1/2}\nabla_\Gamma \dlp v}{L^2(\Gamma)}
\lesssim \norm{v}{H^{1/2}(\Gamma)} + \norm{h_\ell^{1/2}\nabla_\Gamma v}{L^2(\Gamma)}
\end{align*}
and concludes the proof.
\end{proof}

\begin{proof}[Proof of inverse estimate~\eqref{eq:invest:W} for the hypersingular integral operator $\hyp$ and $\gamma=\Gamma$]
Let again $v \in H^1(\Gamma)$.
We split the left-hand side of \eqref{eq:invest:W}
into the sum over all elements $E\in\mesh_\ell$.
On every element, we subtract the constants $c_\el$
from Lemma~\ref{lemma:poincare}.
Note that $\hyp c_E=0$.
Splitting now into near field and far field yields
\begin{align}
\label{eq:invest:W:1}
\norm{h_\ell^{1/2}\hyp v}{L^2(\Gamma)}^2
&
= \sum_{E\in\mesh_\ell}\norm{h_\ell^{1/2}\hyp(v-c_E)}{L^2(E)}^2
\nonumber
\\&
\lesssim \sum_{\el\in\mesh_\ell} \norm{h_\ell^{1/2}\hyp((v-c_\el)\eta_\el)}{L^2(\el)}^2
+ \sum_{\el\in\mesh_\ell}\norm{h_\ell^{1/2}\hyp((v-c_\el)(1-\eta_\el))}{L^2(\el)}^2.
\end{align}
The near field contribution is bounded by the stability of $\hyp:H^1(\Gamma)\rightarrow L^2(\Gamma)$
stated in \eqref{def:hyp}: 
\begin{align*}
\norm{\hyp((v-c_\el)\eta_\el)}{L^2(\el)}^2
\lesssim \norm{( v-c_\el)\eta_\el}{H^1(\widehat\omega_\ell(\el))}^2
\lesssim \norm{\nabla_\Gamma v}{L^2(\widehat\omega_\ell(\el))}^2,
\end{align*}
where we have used the Poincar\'e-type estimate of Lemma~\ref{lemma:poincare}
in the last step.
The sum over all elements gives
\begin{align*}
\sum_{\el\in\mesh_\ell} \norm{h_\ell^{1/2}\hyp((v-c_\el)\eta_\el)}{L^2(\el)}^2
\lesssim \sum_{\el\in\mesh_\ell} h_\ell(\el)\norm{\nabla_\Gamma v}{L^2(\widehat\omega_\ell(\el))}^2
\lesssim \norm{h_\ell^{1/2}\nabla_\Gamma v}{L^2(\Gamma)}^2.
  \end{align*}
It remains to bound the second term on the right-hand side in~\eqref{eq:invest:W:1}.
In view of the support properties of $\eta_E$, the potential 
$u_{\dlp,E}^\far = \widetilde \dlp ((v - c_E)(1-\eta_E))$
is smooth near $E$ (cf. Lemma~\ref{lemma:caccioppoli:K}) so that 
$\gint u_{\dlp,E}^\far = \partial_{\normal} u_{\dlp,E}^\far$ on $E$. 
Furthermore, since 
$\hyp=-\gamma_1^\interior\widetilde\dlp$ we see
\begin{align*}
\norm{\hyp((v-c_\el)(1-\eta_\el))}{L^2(\el)}^2
= \norm{\gint u_{\dlp,\el}^\far}{L^2(\el)}^2
= \norm{\partial_\normal u_{\dlp,\el}^\far}{L^2(\el)}^2
\leq \norm{\nabla u_{\dlp,\el}^\far}{L^2(\el)}^2.
  \end{align*}
We use Proposition~\ref{prop:farfield:K} to conclude
\begin{align*}
\sum_{\el\in\mesh_\ell}\norm{h_\ell^{1/2}\hyp((v-c_\el)(1-\eta_\el))}{L^2(\el)}^2
&
\leq \sum_{\el\in\mesh_\ell}\norm{h_\ell^{1/2}\nabla u_{\dlp,\el}^\far}{L^2(\el)}^2
\lesssim \norm{h_\ell^{1/2}\nabla_\Gamma v}{L^2(\Gamma)}^2
+ \norm{v}{H^{1/2}(\Gamma)}^2.
  \end{align*}
Altogether, we obtain
\begin{align*}
\norm{h_\ell^{1/2}\hyp v}{L^2(\Gamma)}
\lesssim \norm{h_\ell^{1/2}\nabla_\Gamma v}{L^2(\Gamma)}
+ \norm{v}{H^{1/2}(\Gamma)}
  \end{align*}
and thus conclude the proof.
\end{proof}

\subsection{Proof of Theorem~\ref{thm:invest} for $\boldsymbol{\gamma\subsetneqq\Gamma}$}
Finally, it remains to prove the inverse estimates~\eqref{eq:invest:V}--\eqref{eq:invest:W} of Theorem~\ref{thm:invest} for the case $\gamma\subsetneqq\Gamma$.
Let $\psi \in L^2(\gamma)$ and $v \in \H^1(\gamma)$.
We define the trivial extensions
$\widetilde\psi \in L^2(\Gamma)$ and $\widetilde v \in H^1(\Gamma)$ by
\begin{align*}
\widetilde \psi(x):=
\left\{
\begin{array}{cl}
\psi(x) & \text{if }x\in\gamma \\
0 & \text{if }x\in\Gamma\setminus\gamma
\end{array}
\right.,
\qquad
\widetilde v(x):=
\left\{
\begin{array}{cl}
v(x) & \text{if }x\in\gamma \\
0 & \text{if }x\in\Gamma\setminus\gamma
\end{array}
\right..
\end{align*}
Note that $\norm{\psi}{\H^{-1/2}(\gamma)}
= \norm{\widetilde\psi}{H^{-1/2}(\Gamma)}$ and $\norm{v}{\H^{1/2}(\gamma)}
= \norm{\widetilde v}{H^{1/2}(\Gamma)}$.
With this, we see
\begin{align*}
\norm{h_\ell^{1/2}\nabla_\Gamma\slp\psi}{L^2(\gamma)}
\leq \norm{h_\ell^{1/2}\nabla_\Gamma\slp\widetilde\psi}{L^2(\Gamma)}
&
\le\c{inv}\big(\norm{\widetilde\psi}{H^{-1/2}(\Gamma)}
+ \norm{h_\ell^{1/2}\widetilde\psi}{L^2(\Gamma)}\big)
\\ &
=\c{inv}\big(\norm{\psi}{\H^{-1/2}(\gamma)}
+ \norm{h_\ell^{1/2}\psi}{L^2(\gamma)}\big), 
\end{align*}
which proves estimate~\eqref{eq:invest:V}. The other estimates~\eqref{eq:invest:Kadj}--\eqref{eq:invest:W} follow with the same arguments.\qed
%

\def\II{\mathcal I}
\def\Cmin{C_{\rm min}}
\def\Cmax{C_{\rm max}}
\section{Convergent Adaptive Coupling of FEM and BEM}
\label{section:afb}%
\noindent
In this section, we use the inverse estimates of Section~\ref{section:invest}
to prove convergence of an adaptive FEM-BEM coupling algorithm.

\subsection{Model problem}
\label{section:continuous}
We consider the following linear interface problem
\begin{subequations}\label{eq:strongform}
\begin{align}
 -\div(\frakA\nabla u^\i) &= f &&\hspace*{-25mm}\text{in }\Omega^\i:=\Omega,\\
 -\Delta u^\e &= 0 &&\hspace*{-25mm}\text{in } \Omega^\e:=\R^d\backslash\overline\Omega,\\
 u^\i-u^\e &= u_0 &&\hspace*{-25mm}\text{on }\Gamma,\\
 (\frakA\nabla u^\i-\nabla u^\e)\cdot\normal &= \phi_0 &&\hspace*{-25mm}\text{on }\Gamma,\\
 u^\e(x) &= \OO(1/|x|) &&\hspace*{-25mm}\text{as }|x|\to\infty,
 \label{eq:strongform:radiation}
\end{align}
\end{subequations}
see Section~\ref{section:solvability} below for some remarks on the radiation condition~\eqref{eq:strongform:radiation} for 2D.
Here, $\Omega$ is a bounded Lipschitz domain in $\R^d$, $d=2,3$, with polygonal resp.\ polyhedral
boundary $\Gamma:=\partial\Omega$ and
exterior
unit normal vector $\normal$.
We assume that the symmetric coefficient matrix $\frakA(x)\in\R^{d\times d}_\sym$
depends Lipschitz continuously on $x$ and has positive and bounded smallest and largest eigenvalues
\begin{align}\label{eq:lambda}
 0<\Cmin\le \lambda_{\rm min}(x)\le \lambda_{\rm max}(x) \le \Cmax<\infty
 \quad\text{for almost all }x\in\Omega
\end{align} and $x$-independent constants $\Cmax \ge \Cmin>0$.
The given data satisfy $f\in L^2(\Omega)$, $u_0\in H^{1/2}(\Gamma)$, and
$\phi_0\in H^{-1/2}(\Gamma)$. As usual,~\eqref{eq:strongform} is understood
in the weak sense, and the sought solutions satisfy
 $u^\i\in H^1(\Omega)$ and $u^\e\in H^1_\loc(\Omega^\e)
=\set{v:\Omega^\e\to\R}{v\in H^1(K)\text{ for all } K\subset\Omega^\e \text{ compact}}$
 with
$\nabla u^\e\in L^2(\Omega^\e)^d$.

With the boundary integral operators from~\eqref{def:slp}--\eqref{def:hyp},
Problem~\eqref{eq:strongform} is equivalently stated via
the symmetric FEM-BEM coupling, cf.\ e.g.~\cite[Theorem~1]{cs1995}:
Find $(u,\phi)\in\HH:=H^1(\Omega)\times H^{-1/2}(\Gamma)$ such that
\begin{subequations}\label{eq:weakform}
\begin{align}
  \dual{\frakA\nabla u}{\nabla v}_\Omega
+ \dual{\hyp u+(\dlp'-\mfrac12)\phi}{v}_\Gamma
  &= \dual{f}{v}_\Omega + \dual{\phi_0+\hyp u_0}{v}_\Gamma,\\
  \dual{\psi}{\slp\phi-(\dlp-\mfrac12)u}_\Gamma
  &= -\dual{\psi}{(\dlp-\mfrac12)u_0}_\Gamma,
\end{align}
\end{subequations}
for all $(v,\psi)\in\HH$.

The link between~\eqref{eq:strongform} and~\eqref{eq:weakform} is provided by
$u=u^\i$ and $\phi = \nabla u^\e\cdot\normal$.
Moreover, $u^\e$ is then given by the third
Green's formula
\begin{align}\label{eq:uext}
 u^\e(x) = \widetilde\dlp(u-u_0)(x) - \widetilde \slp\phi(x) \quad\text{for } x \in \Omega^\e,
\end{align}
where the potentials $\widetilde\slp$ and $\widetilde\dlp$ formally denote the operators $\slp$ and $\dlp$,
but are now evaluated in $\Omega^\e$ instead of $\Gamma$.
Note carefully that we do not use a notational difference
for the function $u\in H^1(\Omega)$ and its trace $u\in H^{1/2}(\Gamma)$, for which
we compute the boundary integrals $\hyp u$ and $(\dlp-\mfrac12)u$ in~\eqref{eq:weakform}.

\subsection{Existence and uniqueness of solutions}
\label{section:solvability}
Assumption~\eqref{eq:lambda} guarantees pointwise ellipticity
\begin{align}\label{eq:A:elliptic}
 \c{monotone}\,|v-w|^2 \le \big(\frakA(x)v-\frakA(x)w\big)\cdot(v-w)
 \quad\text{for all }v,w\in\R^d\text{ and } x\in\Omega
\end{align}
with $(\cdot)$ denoting the Euclidean scalar product on $\R^d$, as well as
pointwise Lipschitz continuity
\begin{align}\label{eq:A:lipschitz}
 |\frakA(x)v - \frakA(x)w|\le\c{lipschitz}\,|v-w|
 \quad\text{for all }v,w\in\R^d\text{ and }x\in\Omega,
\end{align}
where $\c{monotone}=\Cmin,\c{lipschitz}=\Cmax^{1/2}>0$ do not depend on $x$.

Existence and uniqueness of the solution $\u=(u,\phi)$ of~\eqref{eq:weakform}
rely on the $H^{-1/2}(\Gamma)$-ellipticity of the simple-layer potential
$\slp$. Details are found e.g.\ in~\cite{cs1995,fembem}.
In 3D, the simple-layer potential $\slp$ is always elliptic, i.e.\
\begin{align}\label{eq:V:elliptic}
 \norm{\psi}{H^{-1/2}(\Gamma)}^2 \lesssim \dual{\psi}{\slp\psi}_\Gamma
 \quad\text{for all }\psi\in H^{-1/2}(\Gamma).
\end{align}
To ensure ellipticity in 2D, it suffices to scale $\Omega\subset\R^2$
appropriately so that $\diam(\Omega) < 1$. 

In 2D, the radiation condition $u(x)=\OO(1/|x|)$ either has to be relaxed
to $u(x)=\OO(\log|x|)$ as
$|x|\to\infty$ or the given data must satisfy the compatibility condition
$\int_\Omega f\,dx + \int_\Gamma\phi_0\,d\Gamma = 0$. The latter implies
$\int_\Gamma\phi\,d\Gamma = 0$ and hence the right
decay~\eqref{eq:strongform:radiation} of $u^{\rm ext}$ at infinity, as can
be seen from~\eqref{eq:uext}, cf.\ e.g.~\cite[Section~6.6.1]{s}.

\subsection{Galerkin discretization}
\label{section:discretization}%
Let $\EE_\ell^\Gamma$ be a $\kappa$-shape regular triangulation of the coupling boundary $\Gamma$
in the sense of Section~\ref{section:invest:preliminaries}. In addition, let
$\TT_\ell$ be a regular triangulation of $\Omega$ into compact and non-degenerate
simplices $T\in\TT_\ell$. As above, we assume that $\Omega$ as well as $\Gamma$
are exactly resolved by $\TT_\ell$ and $\EE_\ell^\Gamma$, and $\kappa$-shape
regularity of $\TT_\ell$ is understood in the sense of
\begin{align}
\sigma(\TT_\ell) := \max_{T\in\TT_\ell}\frac{\diam(T)^d}{|T|}\le\kappa<\infty
\end{align}
with $|\cdot|$ denoting the usual volume measure on $\R^d$.
By $\EE_\ell^\Omega$, we denote the set of facets of $\TT_\ell$ which lie
inside of $\Omega$, but not on the coupling boundary.

For the discretization, we use conforming elements and approximate $u$
by a continuous $\TT_\ell$-piecewise polynomial
$U_\ell\in\SS^p(\TT_\ell)\subset H^1(\Omega)$ of degree $p\ge1$.
Moreover, $\phi$ is approximated by a (possibly discontinuous) $\EE_\ell^\Gamma$-piecewise polynomial
$\Phi_\ell\in\PP^q(\EE_\ell^\Gamma)\subset H^{-1/2}(\Gamma)$ of degree
$q\ge0$.
We stress that the usual link between $p$ and $q$ is $q=p-1$, and the lowest-order case would be $p=1$ and $q=0$.

The discrete spaces read
\begin{align}
 \XX_\ell := \SS^p(\TT_\ell)\times\PP^q(\EE_\ell^\Gamma)
 \quad\subseteq\quad
 H^1(\Omega)\times H^{-1/2}(\Gamma) = \HH,
\end{align}
where the product space $\HH$, equipped with the canonical norm
\begin{align}
 \enorm{\v}
 = \big(\norm{v}{H^1(\Omega)}^2 + \norm{\psi}{H^{-1/2}(\Gamma)}^2\big)^{1/2}
 \quad\text{for }\v:=(v,\psi)\in\HH,
\end{align}
becomes a Hilbert space.

The Galerkin formulation of~\eqref{eq:weakform} reads as follows:
Find $\U_\ell^\star=(U_\ell^\star,\Phi_\ell^\star)\in\XX_\ell$ such that
\begin{subequations}\label{eq:galerkin}
\begin{align}
  \dual{\frakA\nabla U_\ell^\star}{\nabla V_\ell}_\Omega
  + \dual{\hyp U_\ell^\star+(\dlp'-\mfrac12)\Phi_\ell^\star}{V_\ell}_\Gamma
  &= \dual{f}{V_\ell}_\Omega + \dual{\phi_0+\hyp u_0}{V_\ell}_\Gamma,\\
  \dual{\Psi_\ell}{\slp\Phi_\ell^\star-(\dlp-\mfrac12)U_\ell^\star}_\Gamma
  &= -\dual{\Psi_\ell}{(\dlp-\mfrac12)u_0}_\Gamma,
\end{align}
\end{subequations}
for all $\V_\ell=(V_\ell,\Psi_\ell)\in \XX_\ell$.

Again, it is known that~\eqref{eq:galerkin} admits a unique discrete
solution $\U_\ell^\star\in\XX_\ell$ which is quasi-optimal in the sense of the
C\'ea lemma
\begin{align}\label{eq:cea}
 \enorm{\u-\U_\ell^\star}
 \le \c{cea}\,\min_{\V_\ell\in\XX_\ell}\enorm{\u-\V_\ell},
\end{align}
where the constant $\setc{cea}>0$ depends only on $\Omega$,
see~\cite{fembem} resp.~\cite[Appendix]{afp} for the fact that,
contrary to \cite[Corollary~3]{cs1995},
no additional assumption on $\TT_\ell$ or $\EE_\ell^\Gamma$ is needed.

\subsection{Perturbed Galerkin discretization}
\label{section:perturbed}%
The right-hand side of the discrete formulation~\eqref{eq:galerkin}
involves the evaluation of $\hyp u_0$ and $\dlp u_0$, which can
hardly be performed analytically. Moreover, so-called \emph{fast methods
for boundary integral operators} usually deal with discrete functions,
cf.~\cite{sr2007}.
Therefore, we propose to approximate at least the given boundary data
$u_0\in H^{1/2}(\Gamma)$ by appropriate discrete functions and proceed
analogously to~\cite{kp}: To that end and to provide below a local measure for the approximation error,
we assume additional regularity $u_0\in H^1(\Gamma)$ and use the Scott-Zhang projection
$\JJ_\ell:L^2(\Gamma)\to\SS^p(\EE^\Gamma_\ell)$
from~\cite{scottzhang} to discretize
$U_{0,\ell} = \JJ_\ell u_0\in\SS^p(\EE^\Gamma_\ell)$.
Now, the perturbed Galerkin formulation reads
as follows: Find $\U_\ell=(U_\ell,\Phi_\ell)\in\XX_\ell$ such that
\begin{subequations}\label{eq:perturbed}
\begin{align}
  \dual{\frakA\nabla U_\ell}{\nabla V_\ell}_\Omega
  + \dual{\hyp U_\ell+(\dlp'-\mfrac12)\Phi_\ell}{V_\ell}_\Gamma
  &= \dual{f}{V_\ell}_\Omega + \dual{\phi_0+\hyp U_{0,\ell}}{V_\ell}_\Gamma,\\
  \dual{\Psi_\ell}{\slp\Phi_\ell-(\dlp-\mfrac12)U_\ell}_\Gamma
  &= -\dual{\Psi_\ell}{(\dlp-\mfrac12)U_{0,\ell}}_\Gamma,
\end{align}
\end{subequations}
for all $\V_\ell=(V_\ell,\Psi_\ell)\in \XX_\ell$. Compared
to~\eqref{eq:galerkin}, the only difference is that~\eqref{eq:perturbed}
involves the approximate data $U_{0,\ell}$ instead of $u_0$ on the
right-hand side. Consequently, the same arguments as before prove
that~\eqref{eq:perturbed} has a unique solution. Moreover, the benefit is that~\eqref{eq:perturbed} only involves discrete boundary integral operators, i.e.\ matrices.

\begin{remark}
We stress that the additional regularity assumption $u_0\in H^1(\Gamma)$ is also necessary for the well-posedness of the residual error estimator of Section~\ref{section:aposteriori}.
\end{remark}

\subsection{A~posteriori error estimate}
\label{section:aposteriori}%
The overall goal of this section is to provide a residual 
{\sl a~posteriori} error estimate~\eqref{eq:indicators} for our 
discretization~\eqref{eq:perturbed}. To that end, we
assume additional regularity $u_0\in H^1(\Gamma)$ and $\phi_0\in L^2(\Gamma)$.
\revision{Recall the notation $\TT_\ell$, $\EE_\ell^\Omega$, and $\EE_\ell^\Gamma$ from Section~\ref{section:discretization}. We}
define the volume residual
\begin{align}\label{eq:eta:volume}
 \eta_\ell(\TT_\ell)^2 = \sum_{T\in\TT_\ell}\eta_\ell(T)^2,
 \quad\text{where}\quad
 \eta_\ell(T)^2 = |T|^{2/d}\, \norm{f+\div(\frakA\nabla U_\ell)}{L^2(T)}^2,
\end{align}
the jump contributions across interior edges
\begin{align}\label{eq:eta:jump}
 \eta_\ell(\EE_\ell^\Omega)^2
 = \sum_{E\in\EE_\ell^\Omega} \eta_\ell(E)^2,
 \quad\text{where}\quad
 \eta_\ell(E)^2 = |E|^{1/(d-1)}\, \norm{[(\frakA\nabla U_\ell)\cdot \normal]}{L^2(E)}^2,
\end{align}
and the boundary contributions on the coupling boundary
\begin{align}\label{eq:eta:boundary}
 \eta_\ell(\EE_\ell^\Gamma)^2
 &= \sum_{E\in\EE_\ell^\Gamma} \eta_\ell(E)^2,
 \quad\text{where}\quad \eta_\ell(E)^2 = \eta_\ell^{(1)}(E)^2 +
 \eta_\ell^{(2)}(E)^2,\nonumber\\
 \begin{split}
 \eta_\ell^{(1)}(E)^2 &= |E|^{1/(d-1)}\, \norm{\phi_0-(\frakA\nabla U_\ell)\cdot\normal + \hyp(U_{0,\ell}-U_\ell)+(\mfrac12-\dlp')\Phi_\ell}{L^2(E)}^2,\\
 \eta_\ell^{(2)}(E)^2 &= |E|^{1/(d-1)}\,\norm{\nabla_\Gamma(\slp\Phi_\ell - (\mfrac12-\dlp)(U_{0,\ell}-U_\ell))}{L^2(E)}^2.
 \end{split}
\end{align}
Moreover, we define the data approximation term
\begin{align}\label{eq:osc}
 \osc_\ell^2 = \sum_{E\in\EE_\ell^\Gamma}\osc_\ell(E)^2,
 \quad\text{where}\quad
 \osc_\ell(E)^2 = |E|^{1/(d-1)}\, \norm{\nabla_\Gamma(u_0-U_{0,\ell})}{L^2(E)}^2.
\end{align}
Altogether, we thus obtain the following {\sl a~posteriori} error bound, whose local contributions are used in Section~\ref{section:algorithm} below to steer an adaptive mesh refinement.

\begin{proposition}\label{prop:aposteriori}
The error is reliably estimated by
\begin{align}\label{eq:indicators}
 \enorm{\u-\U_\ell}^2
 \le\c{reliability} \varrho_\ell^2
 = \!\!\!\!\!\sum_{\tau\in\TT_\ell\cup\EE_\ell^\Omega\cup\EE_\ell^\Gamma}\!\!\!\!\!\varrho_\ell(\tau)^2
 \text{ with }
 \varrho_\ell(\tau)^2
 = \begin{cases}
 \eta_\ell(\tau)^2&\text{for }\tau\in\TT_\ell\cup\EE_\ell^\Omega,\\
 \eta_\ell(\tau)^2 + \osc_\ell(\tau)^2\!\!\!\!&\text{for }\tau\in\EE_\ell^\Gamma.
 \end{cases}
\end{align}
The constant $\setc{reliability}>0$ depends only on $\kappa$-shape regularity of $\TT_\ell$ and $\EE_\ell^\Gamma$.
\end{proposition}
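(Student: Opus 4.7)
The plan is to adapt the standard residual-based reliability analysis for Costabel's symmetric coupling of~\cite{cs1995} to the data-perturbed discretization~\eqref{eq:perturbed}. Denote by $b(\cdot,\cdot)$ the bilinear form defined by the left-hand sides of~\eqref{eq:weakform}, and by $F$, $F_\ell$ the right-hand sides of~\eqref{eq:weakform} and~\eqref{eq:perturbed}, respectively. The starting point is the $\HH$-ellipticity of $b$, a consequence of~\eqref{eq:V:elliptic} and the uniform ellipticity~\eqref{eq:A:elliptic} of $\frakA$ (see~\cite{cs1995,fembem}). This yields
\begin{equation*}
\enorm{\u-\U_\ell}^2 \lesssim b(\u-\U_\ell,\u-\U_\ell) = F(\u-\U_\ell)-b(\U_\ell,\u-\U_\ell),
\end{equation*}
so the task reduces to a residual representation of the right-hand side.

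For the localization, I would integrate by parts the volume bilinear form $\dual{\frakA\nabla U_\ell}{\nabla v}_\Omega$ over each element $T\in\TT_\ell$; this produces the elementwise residual $f+\div(\frakA\nabla U_\ell)$ on $T$, the interior jumps $[(\frakA\nabla U_\ell)\cdot\normal]$ on facets $E\in\EE_\ell^\Omega$, and a boundary trace $(\frakA\nabla U_\ell)\cdot\normal$ on $E\in\EE_\ell^\Gamma$. Assembling this last contribution with the boundary terms from~\eqref{eq:weakform} and inserting $u_0=U_{0,\ell}+(u_0-U_{0,\ell})$ produces the indicator $\eta_\ell^{(1)}(E)$ together with a remainder $\dual{\hyp(u_0-U_{0,\ell})}{v}_\Gamma$. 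An analogous manipulation of the second coupling equation, using that the discrete BEM residual $\slp\Phi_\ell-(\dlp-\mfrac12)(U_\ell+U_{0,\ell})$ lies in $H^1(\Gamma)$ by continuity of $\slp$ on $L^2$ and of $\dlp-\mfrac12$ on $H^1$, delivers $\eta_\ell^{(2)}(E)$ plus a remainder $\dual{\psi}{(\dlp-\mfrac12)(u_0-U_{0,\ell})}_\Gamma$.

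The $h$-weighting inside $\eta_\ell$ will arise from a perturbed Galerkin orthogonality: subtracting~\eqref{eq:perturbed} from~\eqref{eq:weakform} yields $b(\u-\U_\ell,\V_\ell)=F(\V_\ell)-F_\ell(\V_\ell)$ for every $\V_\ell\in\XX_\ell$. I would therefore replace the test function $\u-\U_\ell$ in the residual representation by $\u-\U_\ell-\V_\ell$, choosing $\V_\ell$ as a Scott--Zhang quasi-interpolant of $u-U_\ell$ in $\SS^p(\TT_\ell)$ together with the $L^2$-orthogonal projection of $\phi-\Phi_\ell$ onto $\PP^q(\EE_\ell^\Gamma)$. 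Standard local interpolation estimates then supply the weights $|T|^{1/d}$ and $|E|^{1/(d-1)}$ on the FEM side, while the BEM projection is bounded by the local estimate $\norm{\psi-\Psi_\ell}{\H^{-1/2}(E)}\lesssim|E|^{1/(2(d-1))}\norm{\psi}{L^2(E)}$ from~\cite[Theorem~4.1]{ccdpr:symm}. Cauchy--Schwarz over elements, interior facets and boundary facets then assembles into a bound of the form $\varrho_\ell\,\enorm{\u-\U_\ell}$ for the principal part of the residual.

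Finally, the data-approximation remainders collected above (both those from the $u_0\leftrightarrow U_{0,\ell}$ replacement in the second step and the defect $F(\V_\ell)-F_\ell(\V_\ell)$) are controlled by the continuity of $\hyp$ and $\dlp-\mfrac12$ as maps on $H^{1/2}(\Gamma)$, the trace inequality, and the Scott--Zhang approximation bound $\norm{u_0-U_{0,\ell}}{H^{1/2}(\Gamma)}\lesssim\norm{h_\ell^{1/2}\nabla_\Gamma(u_0-U_{0,\ell})}{L^2(\Gamma)}=\osc_\ell$. Since $\enorm{\V_\ell}\lesssim\enorm{\u-\U_\ell}$ by stability of the chosen quasi-interpolants, these remainders contribute at most $\osc_\ell\,\enorm{\u-\U_\ell}$. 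Putting everything together produces $\enorm{\u-\U_\ell}^2\lesssim\varrho_\ell\,\enorm{\u-\U_\ell}$, which gives~\eqref{eq:indicators} on division. The main obstacle I foresee is the consistent bookkeeping of the data-approximation corrections: the quantity $u_0-U_{0,\ell}$ enters in several distinct places and only its mild norm $\norm{u_0-U_{0,\ell}}{H^{1/2}(\Gamma)}$ is bounded by $\osc_\ell$, so one has to orchestrate the pairings with the test functions so that no stronger norm of $u_0-U_{0,\ell}$ is ever required.
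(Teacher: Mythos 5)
Your overall strategy (direct residual representation for $\u-\U_\ell$, localization by elementwise integration by parts, quasi-interpolation of the test functions, and separate tracking of the $u_0\leftrightarrow U_{0,\ell}$ remainders) differs from the paper's proof, which instead inserts the auxiliary solution $\u_\ell$ of the perturbed \emph{continuous} problem~\eqref{eq:perturbed:weakform}: the triangle inequality splits the error into $\enorm{\u-\u_\ell}\lesssim\norm{u_0-U_{0,\ell}}{H^{1/2}(\Gamma)}\lesssim\osc_\ell$ (by stability plus the Scott--Zhang estimate) and the genuine Galerkin error $\enorm{\u_\ell-\U_\ell}\lesssim\eta_\ell$, for which the classical analysis of~\cite{cs1995} applies verbatim because $\U_\ell$ is the \emph{exact} Galerkin solution for the data $U_{0,\ell}$. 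This splitting makes the data-approximation bookkeeping that you worry about at the end disappear entirely; your direct route can be made to work but is more delicate. Beyond this structural difference, there are two concrete gaps.

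First, your opening inequality $\enorm{\u-\U_\ell}^2\lesssim b(\u-\U_\ell,\u-\U_\ell)$ is false: by the duality $\dual{(\dlp'-\mfrac12)\psi}{v}_\Gamma=\dual{\psi}{(\dlp-\mfrac12)v}_\Gamma$ one has $b(\v,\v)=\dual{\frakA\nabla v}{\nabla v}_\Omega+\dual{\hyp v}{v}_\Gamma+\dual{\psi}{\slp\psi}_\Gamma$, which vanishes for $\v=(1,0)$ although $\enorm{\v}>0$; the form controls only $\norm{\nabla v}{L^2(\Omega)}$, not $\norm{v}{H^1(\Omega)}$. This is precisely why the paper invokes the stabilization results of~\cite{fembem} and~\cite[Appendix]{afp}; you must either work with the equivalent elliptic stabilized form or with the inf-sup stability of the continuous operator, i.e.\ bound $\enorm{\u-\U_\ell}$ by the dual norm of the residual tested against \emph{arbitrary} $\v\in\HH$. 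Second, your mechanism for producing the $h$-weight in $\eta_\ell^{(2)}$ does not work: you propose to project the test component $\phi-\Phi_\ell$ onto $\PP^q(\EE_\ell^\Gamma)$ and invoke $\norm{(1-\Pi_\ell)\psi}{\H^{-1/2}}\lesssim\norm{h_\ell^{1/2}\psi}{L^2}$ from~\cite[Theorem~4.1]{ccdpr:symm}, but that estimate requires $\psi\in L^2(\Gamma)$, whereas $\phi-\Phi_\ell$ is only in $H^{-1/2}(\Gamma)$ under the stated data regularity. The correct tool, and the one the paper emphasizes as the only nontrivial new ingredient in 3D, is the Poincar\'e-type localization $\norm{R_2}{H^{1/2}(\Gamma)}\lesssim\norm{h_\ell^{1/2}\nabla_\Gamma R_2}{L^2(\Gamma)}$ from~\cite[Corollary~4.2]{cms}, applied to the BEM residual $R_2=\slp\Phi_\ell-(\mfrac12-\dlp)(U_{0,\ell}-U_\ell)\in H^1(\Gamma)$ itself, using that the Galerkin condition makes $R_2$ $L^2$-orthogonal to $\PP^q(\EE_\ell^\Gamma)\supseteq\PP^0(\EE_\ell^\Gamma)$; one then simply pairs $\dual{\psi}{R_2}\le\norm{\psi}{H^{-1/2}(\Gamma)}\norm{R_2}{H^{1/2}(\Gamma)}$ without any projection of $\psi$.
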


\begin{proof}[Sketch of proof]
Instead of solving the non-perturbed Galerkin formulation~\eqref{eq:galerkin}
of the weak formulation~\eqref{eq:weakform}, we solve the
perturbed Galerkin formulation~\eqref{eq:perturbed} in practice. Put differently,
$\U_\ell=(U_\ell,\Phi_\ell)\in\XX_\ell$ is the Galerkin approximation
of the unique solution $\u_\ell=(u_\ell,\phi_\ell)\in\HH$ of the
continuous perturbed formulation
\begin{subequations}\label{eq:perturbed:weakform}
\begin{align}
  \dual{\frakA\nabla u_\ell}{\nabla v}_\Omega + \dual{\hyp u_\ell+(\dlp'-\mfrac12)\phi_\ell}{v}_\Gamma
  &= \dual{f}{v}_\Omega + \dual{\phi_0+\hyp U_{0,\ell}}{v}_\Gamma,\\
  \dual{\psi}{\slp\phi_\ell-(\dlp-\mfrac12)u_\ell}_\Gamma
  &= -\dual{\psi}{(\dlp-\mfrac12)U_{0,\ell}}_\Gamma,
\end{align}
\end{subequations}
for all $\v=(v,\psi)\in\HH$.
By stability, the error between $\u$ and $\u_\ell$ is controlled by
\begin{align}\label{eq:kp:apx}
 \enorm{\u-\u_\ell}
 \le \norm{u_0-U_{0,\ell}}{H^{1/2}(\Gamma)}
 \lesssim  \norm{h_\ell^{1/2}\nabla_\Gamma(u_0-U_{0,\ell})}{L^2(\Gamma)}
 \simeq \osc_\ell,
\end{align}
where the required approximation estimate for the Scott-Zhang projection can be found 
in~\revision{\cite[Theorem~3]{kp}}. Here, $h_\ell\in\PP^0(\EE_\ell^\Gamma)$ denotes
the local mesh size function of Section~\ref{section:invest:preliminaries}.
Therefore, it only remains to prove
\begin{align*}
 \enorm{\u_\ell-\U_\ell}
 \lesssim \eta_\ell = \Big(\sum_{T\in\TT_\ell}\eta_\ell(T)^2 + \sum_{E\in\EE_\ell^\Omega\cup\EE_\ell^\Gamma}\eta_\ell(E)^2\Big)^{1/2}.
\end{align*}
This follows along the lines of the 2D proof presented in~\revision{\cite[Theorem~4]{cs1995}}. The only
remarkable difference is that in the 3D case the estimate
\begin{align*}
 \norm{\slp\Phi_\ell - (\mfrac12-\dlp)(U_{0,\ell}-U_\ell)}{H^{1/2}(\Gamma)}
 \lesssim
 \norm{h_\ell^{1/2}\nabla_\Gamma\big(\slp\Phi_\ell - (\mfrac12-\dlp)(U_{0,\ell}-U_\ell)\big)}{L^2(\Gamma)}
\end{align*}
does not follow from continuity arguments as in~\cite{cs1995}, but from
a Poincar\'e-type estimate provided by~\revision{\cite[Corollary~4.2]{cms}}.
\end{proof}

\begin{remark}
{\rm(i)}
For the scaling of the different contributions of $\varrho_\ell$ defined in~\eqref{eq:eta:volume}--\eqref{eq:osc},
recall that
$\diam(T)\simeq|T|^{1/d}$ for a $d$-dimensional volume element $T\in\TT_\ell$ and 
$\diam(E)\simeq|E|^{1/(d-1)}$ for a $(d-1)$-dimensional boundary element $E\in\EE_\ell^\Gamma$
or interior facets $E\in\EE_\ell^\Omega$. Altogether, the volume contribution~\eqref{eq:eta:volume} is
weighted by $\diam(T)$, while all other contributions~\eqref{eq:eta:jump}--\eqref{eq:osc} are
weighted by $\diam(E)^{1/2}$.\\
{\rm(ii)} \revision{In~\cite[Theorem~3]{kp}}, the approximation estimate in~\eqref{eq:kp:apx} is proved for any $H^{1/2}$-stable projection
$\JJ_\ell$ onto $\SS^p(\EE_\ell^\Gamma)$. For $p=1$, it is proved in~\revision{\cite[Theorem~6]{kpp}} that newest vertex
bisection guarantees $H^1$-stability 
\revision{and hence also $H^{1/2}$-stability}
of the $L^2$-orthogonal
projection $\Pi_\ell:L^2(\Gamma)\to\SS^1(\EE_\ell^\Gamma)$ on the 2D manifold $\Gamma$. In the case $p=1$, we may thus also
use $U_{0,\ell}=\Pi_\ell u_0$ to discretize the given Dirichlet data, and Proposition~\ref{prop:aposteriori}
still holds accordingly.\\
{\rm(iii)} In 2D, one may also use nodal interpolation $U_{0,\ell}=I_\ell u_0$ to
discretize the data, and~\eqref{eq:kp:apx} holds accordingly. Details are found in~\cite[Proposition~1]{afp}.
\end{remark}

\subsection{Local mesh refinement}\label{section:meshrefinement}
Let $\TT_\ell$ be a sequence of triangulations of $\Omega$ which is obtained from a given
initial triangulation $\TT_0$ by successive local mesh refinement.
We require the following assumptions on the local mesh refinement of the volume mesh
with $\ell$-independent constants $0<\kappa<\infty$ and $0<q<1$:
\begin{itemize}
\item[(T1)] The triangulations $\TT_\ell$ of $\Omega$ are regular in the sense of Ciarlet and $\kappa$-shape regular.
\item[(T2)] Each refined element $T\in\TT_\ell$ is the disjoint union of its sons $T'\in\TT_{\ell+1}$.
\item[(T3)] The sons $T'\in\TT_{\ell+1}$ of a refined element $T\in\TT_\ell$ satisfy $|T'|\le q\,|T|$.
\item[(T4)] The sons $E'\in\EE_{\ell+1}^\Omega$ of some refined facet $E\in\EE_\ell^\Omega$ satisfy $|E'|\le q\,|E|$.
\end{itemize}
In addition, let $\EE_\ell^\Gamma$ be a sequence of triangulations of the coupling boundary $\Gamma$ which is obtained from an initial triangulation $\EE_0^\Gamma$  by successive local mesh refinement.
For the local mesh refinement of the boundary mesh, we assume the following:
\begin{itemize}
\item[(E1)] The triangulations $\EE_\ell^\Gamma$ of $\Gamma$ are regular in the sense of Ciarlet and $\kappa$-shape regular.
\item[(E2)] Each refined element $E\in\EE_\ell^\Gamma$ is the disjoint union of its sons $E'\in\EE_{\ell+1}$.
\item[(E3)] The sons $E'\in\EE_{\ell+1}^\Gamma$ of a refined element $E\in\EE_\ell^\Gamma$ satisfy $|E'|\le q\,|E|$.
\end{itemize}
From (T2) and~(E2), it follows that the discrete spaces are nested $\XX_{\ell}\subseteq\XX_{\ell+1}$.

All of these assumptions are satisfied for the usual mesh refinement strategies,
see e.g.~\cite{verfuerth96}.
For instance, one may use newest vertex bisection for both $\TT_\ell$ and $\EE_\ell^\Gamma$. We
refer to e.g.~\cite{stevenson} for details on the latter algorithm, but remark that (T3)--(T4) and (E3) are then satisfied with
$q=1/2$. Moreover, we stress that 2D and 3D newest vertex bisection only leads to finitely many equivalence classes
of elements $T\in\bigcup_{\ell=0}^\infty\TT_\ell$.

In the experiments below, we let $\EE_\ell^\Gamma = \TT_\ell|_\Gamma$ be the induced triangulation of
the coupling boundary $\Gamma$ and use newest vertex bisection to refine $\TT_\ell$.
In 3D, $\EE_\ell^\Gamma$ can then equivalently be obtained by use of 2D newest
vertex bisection from $\EE_0^\Gamma$.
We stress, however, that this coupling
is \emph{not} needed in theory.

\subsection{Adaptive algorithm and convergence}
\label{section:algorithm}%
In this section, we consider the following common adaptive algorithm, which steers the local mesh refinement by use of the local contributions of $\varrho_\ell^2 = \eta_\ell^2 + \osc_\ell^2$ of Proposition~\ref{prop:aposteriori}.

\begin{algorithm}\label{algorithm}
\textsc{Input}: Initial meshes $(\TT_0,\EE_0)$ for $\ell:=0$, adaptivity parameter $\theta\in(0,1)$.
\begin{itemize}
\item[(i)] Compute discrete solution $\U_\ell\in\XX_\ell$.
\item[(ii)] Compute refinement indicators $\varrho_\ell(\tau)$ from~\eqref{eq:indicators} for all $\tau\in\TT_\ell\cup\EE_\ell^\Omega\cup\EE_\ell^\Gamma$.
\item[(iii)] Determine a set $\MM_\ell\subseteq\TT_\ell\cup\EE_\ell^\Omega\cup\EE_\ell^\Gamma$ which satisfies the D\"orfler marking criterion
\begin{align}\label{eq:doerfler}
 \theta\,\varrho_\ell^2
 \le \sum_{T\in\TT_\ell\cap\MM_\ell}\varrho_\ell(T)^2
 + \sum_{E\in\EE_\ell^\Omega\cap\MM_\ell}\varrho_\ell(E)^2
 + \sum_{E\in\EE_\ell^\Gamma\cap\MM_\ell}\varrho_\ell(E)^2.
\end{align}
\item[(iv)] Mark elements $T\in\TT_\ell\cap\MM_\ell$ and facets
$E\in(\EE_\ell^\Omega\cup\EE_\ell^\Gamma)\cap\MM_\ell$ for refinement.
\item[(v)] Generate $(\TT_{\ell+1},\EE_{\ell+1})$ by refinement of at least all marked elements and facets.
\item[(vi)] Increase counter $\ell\mapsto\ell+1$, and goto {\rm(i)}.
\end{itemize}
\textsc{Output}: Sequence of error estimators $(\varrho_\ell)_{\ell\in\N}$ and
discrete solutions $(\U_\ell)_{\ell\in\N}$.\qed
\end{algorithm}

\revision{Since adaptive mesh refinement does not guarantee that the volume mesh $\TT_\ell$ and the boundary mesh $\EE_\ell^\Gamma$ become infinitely fine, convergence $\U_\ell\to\u$ is a~priori unclear as $\ell\to0$. 
However, we employ the concept of \emph{estimator reduction} from~\cite{estconv} to prove $\varrho_\ell\to0$ as $\ell\to0$. Finally, convergence $\U_\ell\to\u$ as $\ell\to\infty$ then follows from the reliability~\eqref{eq:indicators} of $\varrho_\ell$. These observations are stated in the following theorem.}

\begin{theorem}\label{thm:convergence}
Algorithm~\ref{algorithm} yields a sequence of meshes $\TT_\ell$, $\EE_\ell$, and approximations $\U_{\ell}$ 
with the following properties: 
\begin{enumerate}[\rm(i)]
\item 
there are constants $0<\rho<1$ and $\c{estconv}>0$ such that the overall error estimator 
$\varrho_\ell^2 = \eta_\ell^2+\osc_\ell^2$ satisfies
\begin{align}\label{eq:estconv}
 \varrho_{\ell+1}^2
 \le \rho\,\varrho_\ell^2
 + \c{estconv}\,\big(\enorm{\U_{\ell+1}-\U_\ell}^2
 + \norm{U_{0,\ell+1}-U_{0,\ell}}{H^{1/2}(\Gamma)}^2\big)
 \quad\text{for all }\ell\ge0.
\end{align}
The constant $0<\rho<1$ depends only on the mesh size reduction constant $0<q<1$ from assumptions~(T3)--(T4) and (E3) and the adaptivity parameter $0<\theta<1$. The constant $\setc{estconv}>0$ depends additionally on $\Omega$, $\kappa$-shape regularity of $\TT_\ell$ and $\EE_\ell^\Gamma$, and the polynomial degrees $p$, $q$. 
\item 
The error indicators as well as the oscillation terms tend to zero: 
\begin{align}\label{eq:convergence}
 \lim_{\ell\to\infty}\eta_\ell
 = 0 = \lim_{\ell\to\infty}\osc_\ell.
\end{align}
\item 
The adaptive coupling algorithm converges: 
$\lim\limits_{\ell\to\infty}\enorm{\u-\U_\ell} = 0$. 
\end{enumerate}
\end{theorem}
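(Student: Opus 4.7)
The plan is to follow the \emph{estimator reduction} framework of~\cite{estconv} (already adapted to an $(h-h/2)$-type estimator in~\cite{afp} and announced as the strategy in the paper's introduction). Part~(i) is the technical core and already implies parts~(ii) and~(iii). Indeed, once \eqref{eq:estconv} is established, nestedness $\XX_\ell\subseteq\XX_{\ell+1}$ together with C\'ea-type quasi-optimality \eqref{eq:cea} applied to the perturbed problem \eqref{eq:perturbed}, combined with $H^{1/2}$-stability of the Scott--Zhang projection, shows that $U_{0,\ell}$ converges in $H^{1/2}(\Gamma)$ to some limit $U_{0,\infty}$ and, as a consequence, $\U_\ell$ converges in $\HH$ to the Galerkin solution $\u_\infty\in\overline{\bigcup_\ell\XX_\ell}$ of the limiting perturbed problem. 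In particular $\enorm{\U_{\ell+1}-\U_\ell}\to0$ and $\|U_{0,\ell+1}-U_{0,\ell}\|_{H^{1/2}(\Gamma)}\to0$, and the elementary iteration lemma of~\cite{estconv} applied to~\eqref{eq:estconv} yields $\varrho_\ell\to0$ and hence~\eqref{eq:convergence}; the reliability bound of Proposition~\ref{prop:aposteriori} then gives $\enorm{\u-\U_\ell}\le\c{reliability}^{1/2}\varrho_\ell\to0$, which is~(iii).

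To prove~(i), decompose $\varrho_{\ell+1}^2$ according to whether an element/facet was refined in the step $\ell\to\ell+1$. On unrefined $\tau$ the mesh-size weight is unchanged, whereas on refined $\tau$ assumptions~(T3)--(T4) and (E3) shrink it by the factor $q\in(0,1)$. The residual integrands at level $\ell+1$ are built from $U_{\ell+1},\Phi_{\ell+1},U_{0,\ell+1}$ instead of $U_\ell,\Phi_\ell,U_{0,\ell}$. A triangle inequality combined with Young's inequality with small parameter $\eps>0$ gives, schematically,
\begin{align*}
\varrho_{\ell+1}(\tau)^2 \le (1+\eps)\,q_\tau\,\varrho_\ell(\widetilde\tau)^2 + C_\eps\,R_\ell(\tau),
\end{align*}
where $q_\tau\in\{q,1\}$ according to whether $\tau$ was refined, $\widetilde\tau$ denotes the parent, and $R_\ell(\tau)$ collects perturbation terms depending on the discrete increments $\U_{\ell+1}-\U_\ell$ and $U_{0,\ell+1}-U_{0,\ell}$. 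Summing over $\tau\in\TT_{\ell+1}\cup\EE_{\ell+1}^\Omega\cup\EE_{\ell+1}^\Gamma$ and invoking the D\"orfler criterion~\eqref{eq:doerfler} on the marked set yields
\begin{align*}
\varrho_{\ell+1}^2 \le (1+\eps)\bigl(1-(1-q)\theta\bigr)\varrho_\ell^2
 + C_\eps\bigl(\enorm{\U_{\ell+1}-\U_\ell}^2+\|U_{0,\ell+1}-U_{0,\ell}\|_{H^{1/2}(\Gamma)}^2\bigr),
\end{align*}
and choosing $\eps$ small enough produces \eqref{eq:estconv} with $\rho:=(1+\eps)\bigl(1-(1-q)\theta\bigr)<1$.

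The crucial and only nontrivial step is the control of $R_\ell(\tau)$ for the boundary contributions $\eta_\ell^{(1)}(E)$ and $\eta_\ell^{(2)}(E)$, since these spawn perturbation terms of the form $\|h_{\ell+1}^{1/2}\hyp(U_{\ell+1}-U_\ell)\|_{L^2(E)}$, $\|h_{\ell+1}^{1/2}\dlp'(\Phi_{\ell+1}-\Phi_\ell)\|_{L^2(E)}$, $\|h_{\ell+1}^{1/2}\nabla_\Gamma\slp(\Phi_{\ell+1}-\Phi_\ell)\|_{L^2(E)}$, and analogous ones with $\dlp$. These are exactly the left-hand sides of the inverse estimates of Corollary~\ref{cor:invest}, which, summed over $\EE_{\ell+1}^\Gamma$, bound them by $\|U_{\ell+1}-U_\ell\|_{H^{1/2}(\Gamma)}+\|\Phi_{\ell+1}-\Phi_\ell\|_{H^{-1/2}(\Gamma)}\lesssim\enorm{\U_{\ell+1}-\U_\ell}$, together with harmless $h$-weighted $L^2$ remainders in $U_{0,\ell+1}-U_{0,\ell}$ that are absorbed into $\|U_{0,\ell+1}-U_{0,\ell}\|_{H^{1/2}(\Gamma)}$. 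The volume residual $\eta_\ell(T)$ and the interior-facet jumps $\eta_\ell(E)$ are treated with classical polynomial inverse estimates. The main obstacle is precisely this $h$-weighted, nonlocal boundary perturbation: standard polynomial inverse estimates fail to relate it to energy-norm differences, and Theorem~\ref{thm:invest}/Corollary~\ref{cor:invest} is exactly the tool that overcomes this obstruction, thereby enabling the transfer of the AFEM estimator-reduction argument to the FEM--BEM coupling setting considered here.
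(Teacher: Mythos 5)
Your proposal is correct and follows essentially the same route as the paper: an estimator-reduction argument splitting refined from unrefined elements (with the reduction factors from (T3)--(T4), (E3)), Young's inequality, the D\"orfler criterion, and the inverse estimates of Corollary~\ref{cor:invest} to absorb the nonlocal boundary perturbation terms into $\enorm{\U_{\ell+1}-\U_\ell}$, followed by a~priori convergence of the discrete solutions and of the Scott--Zhang projections to make the perturbation a null sequence, and reliability to conclude. The only cosmetic differences are that the paper tracks the exact reduction exponents $q^{2/d}$ and $q^{1/(d-1)}$ and isolates the a~priori convergence steps as Lemmas~\ref{lemma:apriori} and~\ref{lemma:scottzhang}.
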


\begin{proof}[Proof of Estimate~\eqref{eq:estconv} of Theorem~\ref{thm:convergence}]
We recall that $(a+b)^2\le (1+\delta)a^2+(1+\delta^{-1})b^2$ for all $a,b\in\R$ and arbitrary $\delta>0$.

$\bullet$ First, for the volume contributions~\eqref{eq:eta:volume} and the interior jumps~\eqref{eq:eta:jump},
we argue as in~\cite{ckns}.
By use of the triangle inequality, we see
\begin{align}\label{eq0:rho:volume}
 \begin{split}
 \eta_{\ell+1}(\TT_{\ell+1})^2
 &= \sum_{T'\in\TT_{\ell+1}}|T'|^{2/d}\,\norm{f+\div(\frakA\nabla U_{\ell+1})}{L^2(T')}^2
 \\&
 \le (1+\delta)\sum_{T'\in\TT_{\ell+1}}|T'|^{2/d}\,\norm{f+\div(\frakA\nabla U_{\ell})}{L^2(T')}^2
 \\&\quad
 + (1+\delta^{-1})\sum_{T'\in\TT_{\ell+1}}|T'|^{2/d}\,\norm{\div \frakA(\nabla U_{\ell+1}-\nabla U_{\ell})}{L^2(T')}^2
 \end{split}
\end{align}
The product rule gives $\div \frakA(\nabla U_{\ell+1}-\nabla U_{\ell}) = (\div\frakA)\cdot\nabla U_{\ell+1} + \frakA:D^2(\nabla U_{\ell+1}-\nabla U_{\ell})$ with $D^2(\cdot)$ being the Hessian matrix. Using the triangle inequality and a scaling argument,
the arguments of the second sum in~\eqref{eq0:rho:volume} are estimated by
\begin{align*}
 |T'|^{2/d}\,&\norm{\div \frakA(\nabla U_{\ell+1}-\nabla U_{\ell})}{L^2(T')}^2
 \\&
 \lesssim |T'|^{2/d}\big(\norm{\div \frakA}{L^\infty(T')}^2 \norm{\nabla(U_{\ell+1}-U_\ell)}{L^2(T')}^2
 + \norm{\frakA}{L^\infty(T')}^2 \norm{D^2(\nabla U_{\ell+1}-\nabla U_{\ell})}{L^2(T')}^2
 \\&
 \lesssim
 \norm{\frakA}{W^{1,\infty}(T')}^2\norm{\nabla(U_{\ell+1}-U_\ell)}{L^2(T')}^2.
\end{align*}
The first sum in~\eqref{eq0:rho:volume} is split into non-refined and refined elements, and assumption~(T3)
is used for the refined elements. Recall that assumption~(T2) states that each element $T\in\TT_\ell$ is the union of its sons $T'\in\TT_{\ell+1}$.
This gives
\begin{align*}
 \sum_{T'\in\TT_{\ell+1}}&|T'|^{2/d}\,\norm{f+\div(\frakA\nabla U_{\ell})}{L^2(T')}^2
 \\&
 \le \sum_{T\in\TT_\ell\cap\TT_{\ell+1}}|T|^{2/d}\,\norm{f+\div(\frakA\nabla U_{\ell})}{L^2(T)}^2
 + q^{2/d}\sum_{T\in\TT_\ell\backslash\TT_{\ell+1}}|T|^{2/d}\,\norm{f+\div(\frakA\nabla U_{\ell})}{L^2(T)}^2
 \\&
 = \eta_\ell(\TT_\ell)^2 - (1-q^{2/d})\,\eta_\ell(\TT_\ell\backslash\TT_{\ell+1})^2.
\end{align*}
Altogether, this gives
\begin{align}\label{eq:rho:volume}
 \begin{split}
 \eta_{\ell+1}(\TT_{\ell+1})^2
 &\le (1+\delta)\big(\eta_\ell(\TT_\ell)^2-(1-q^{2/d})\,\eta_\ell(\TT_\ell\backslash\TT_{\ell+1})^2\big)
 \\&\qquad
 + (1+\delta^{-1})C\,\norm{\nabla(U_{\ell+1}-U_\ell)}{L^2(\Omega)}^2,
 \end{split}
\end{align}
where $C>0$ depends on $\Omega$, on $\kappa$-shape regularity of $\TT_{\ell+1}$, and the (local) $W^{1,\infty}$-norm
of $\frakA$.

$\bullet$ Second, for the interior jumps~\eqref{eq:eta:jump}, we argue 
\begin{align}\label{eq0:rho:jump}
 \begin{split}
 \eta_{\ell+1}(\EE_{\ell+1}^\Omega)^2
 &= \sum_{E'\in\EE_{\ell+1}^\Omega} |E'|^{1/(d-1)}\norm{[\frakA\nabla U_{\ell+1}\cdot\normal]}{L^2(E')}^2
 \\&
 \le (1+\delta) \sum_{E'\in\EE_{\ell+1}^\Omega} |E'|^{1/(d-1)}\norm{[\frakA\nabla U_\ell\cdot\normal]}{L^2(E')}^2
 \\&\quad
 + (1+\delta^{-1}) \sum_{E'\in\EE_{\ell+1}^\Omega} |E'|^{1/(d-1)}\norm{[\frakA\nabla(U_{\ell+1}-U_\ell)\cdot\normal]}{L^2(E')}^2
 \end{split}
\end{align}
The arguments of the second sum in~\eqref{eq0:rho:jump} are estimated by use of a scaling argument, namely,
\begin{align*}
 |E'|^{1/(d-1)}\norm{[\frakA\nabla(U_{\ell+1}-U_\ell)\cdot\normal]}{L^2(E')}^2
 \lesssim \norm{\frakA}{W^{1,\infty}(\omega_{\ell+1,E'})}^2\,\norm{\nabla(U_{\ell+1}-U_\ell)}{L^2(\omega_{\ell+1,E'})}^2
\end{align*}
with $\omega_{\ell+1,E'}=T'_+\cup T'_-$ being the patch of $E'=T'_+\cap T'_-\in\EE_{\ell+1}^\Omega$. To treat the first sum in~\eqref{eq0:rho:jump},
the essential observation
is that $\frakA\nabla U_\ell$ is continuous across new facets $\EE_{\ell+1}^\Omega\backslash\EE_\ell^\Omega$
so that the respective jumps vanish. This and assumption~(T4) for refined interior facets give
\begin{align*}
  &\sum_{E'\in\EE_{\ell+1}^\Omega}|E'|^{1/(d-1)}\norm{[\frakA\nabla U_{\ell}\cdot\normal]}{L^2(E')}^2
 \\&\quad
 \le\sum_{E'\in\EE_\ell^\Omega\cap\EE_{\ell+1}^\Omega} |E|^{1/(d-1)}\norm{[\frakA\nabla U_{\ell}\cdot\normal]}{L^2(E)}^2
 + q^{1/(d-1)}\sum_{E'\in\EE_\ell^\Omega\backslash\EE_{\ell+1}^\Omega}|E|^{1/(d-1)}\norm{[\frakA\nabla U_{\ell}\cdot\normal]}{L^2(E)}^2
 \\&\quad
 = \eta_\ell(\EE_\ell^\Omega)^2 - (1-q^ {1/(d-1)})\eta_\ell(\EE_\ell^\Omega\backslash\EE_{\ell+1}^\Omega)^2.
\end{align*}
Altogether, we
obtain
\begin{align}\label{eq:rho:jump}
 \begin{split}
 \eta_{\ell+1}(\EE_{\ell+1}^\Omega)^2
 &\le (1+\delta)\big(\eta_\ell(\EE_\ell^\Omega)^2 - (1-q^ {1/(d-1)})\eta_\ell(\EE_\ell^\Omega\backslash\EE_{\ell+1}^\Omega)^2\big)
 \\&\qquad
 + (1+\delta^{-1})C\,\norm{\nabla(U_{\ell+1}-U_\ell)}{L^2(\Omega)}^2,
 \end{split}
\end{align}
where $C>0$ depends on $\kappa$-shape regularity of $\TT_{\ell+1}$ and the (local) $W^{1,\infty}$-norm
of $\frakA$.

$\bullet$ Third, we consider the first boundary contribution~\eqref{eq:eta:boundary} of the discretization error estimator.
By use of the triangle inequality, we see
\begin{align*}
 &\norm{\phi_0-(\frakA\nabla U_{\ell+1})\cdot\normal + \hyp(U_{0,\ell+1}-U_{\ell+1})+(\mfrac12-\dlp')\Phi_{\ell+1}}{L^2(E')}
 \\&\quad
 \le \norm{\phi_0-(\frakA\nabla U_\ell)\cdot\normal + \hyp(U_{0,\ell}-U_\ell)+(\mfrac12-\dlp')\Phi_\ell}{L^2(E')}
 \\&\qquad
 + \norm{\frakA\nabla(U_{\ell+1}-U_\ell)\cdot\normal}{L^2(E')}
 + \norm{\hyp\big((U_{0,\ell+1}-U_{0,\ell})-(U_{\ell+1}-U_\ell)\big)}{L^2(E')}
 \\&\qquad
 + \frac12\,\norm{\Phi_{\ell+1}-\Phi_\ell}{L^2(E')}
 + \norm{\dlp'(\Phi_{\ell+1}-\Phi_\ell)}{L^2(E')}
\end{align*}
We sum these terms over all elements $E'\in\EE_{\ell+1}^\Gamma$ and use assumption~(E3) to see
\begin{align*}
 &\sum_{E'\in\EE_{\ell+1}^\Gamma}|E'|^{1/(d-1)}
 \norm{\phi_0-(\frakA\nabla U_\ell)\cdot\normal + \hyp(U_{0,\ell}-U_\ell)+(\mfrac12-\dlp')\Phi_\ell}{L^2(E')}^2
 \\&\quad
 \le \sum_{E\in\EE_{\ell}^\Gamma\cap\EE_{\ell+1}^\Gamma}|E|^{1/(d-1)}
 \norm{\phi_0-(\frakA\nabla U_\ell)\cdot\normal + \hyp(U_{0,\ell}-U_\ell)+(\mfrac12-\dlp')\Phi_\ell}{L^2(E)}^2
 \\&\qquad
 + q^{1/(d-1)}\sum_{E\in\EE_{\ell}^\Gamma\backslash\EE_{\ell+1}^\Gamma}|E|^{1/(d-1)}
  \norm{\phi_0-(\frakA\nabla U_\ell)\cdot\normal + \hyp(U_{0,\ell}-U_\ell)+(\mfrac12-\dlp')\Phi_\ell}{L^2(E)}^2
.
\end{align*}
A scaling argument reveals
\begin{align*}
 \sum_{E'\in\EE_{\ell+1}^\Gamma}|E'|^{1/(d-1)}\norm{\frakA\nabla(U_{\ell+1}-U_\ell)\cdot\normal}{L^2(E')}^2
 \lesssim \norm{\nabla(U_{\ell+1}-U_\ell)}{L^2(\Omega)}^2,
\end{align*}
where the implied constant depends on the (local) $W^{1,\infty}$-norm of $\frakA$ and $\kappa$-shape regularity of $\TT_{\ell+1}$.
The inverse estimates of Corollary~\ref{cor:invest} for $\dlp'$ and $\hyp$ shows
\begin{align*}
 &\sum_{E'\in\EE_{\ell+1}^\Gamma}|E'|^{1/(d-1)}\big(
 \norm{\hyp\big((U_{0,\ell+1}-U_{0,\ell})-(U_{\ell+1}-U_\ell)\big)}{L^2(E')}^2
 + \norm{\dlp'(\Phi_{\ell+1}-\Phi_\ell)}{L^2(E')}^2
\big)
 \\&\quad
 = \norm{h_{\ell+1}^{1/2}\hyp\big((U_{0,\ell+1}-U_{0,\ell})-(U_{\ell+1}-U_\ell)\big)}{L^2(\Gamma)}^2
 + \norm{h_{\ell+1}^{1/2}\dlp'(\Phi_{\ell+1}-\Phi_\ell)}{L^2(\Gamma)}^2
 \\&\quad
 \lesssim \norm{(U_{0,\ell+1}-U_{0,\ell})-(U_{\ell+1}-U_\ell)}{H^{1/2}(\Gamma)}^2
 + \norm{\Phi_{\ell+1}-\Phi_\ell}{H^{-1/2}(\Gamma)}^2.
\end{align*}
An inverse estimate from~\cite[Theorem~3.6]{ghs} proves
\begin{align*}
 \sum_{E'\in\EE_{\ell+1}^\Gamma}|E'|^{1/(d-1)}\norm{\Phi_{\ell+1}-\Phi_\ell}{L^2(E')}
 = \norm{h_{\ell+1}^{1/2}(\Phi_{\ell+1}-\Phi_\ell)}{L^2(\Gamma)}
 \lesssim \norm{\Phi_{\ell+1}-\Phi_\ell}{H^{-1/2}(\Gamma)}^2,
\end{align*}
where the implied constant depends only on $\Gamma$, the $\kappa$-shape regularity
of $\EE_{\ell+1}^\Gamma$, and the polynomial degree $q$. Combining all these estimates, we obtain
\begin{align}\label{eq:rho:boundary1}
\begin{split}
 \eta_{\ell+1}^{(1)}(\EE_{\ell+1}^\Gamma)^2
 &\le (1+\delta)\big(\eta_\ell^{(1)}(\EE_\ell^\Gamma)^2 -(1-q^{1/(d-1)})\eta_\ell^{(1)}(\EE_\ell^\Gamma\backslash\EE_{\ell+1}^\Gamma)^2\big)
 \\&\qquad
 + (1+\delta^{-1})C\,\big(
 \norm{\nabla(U_{\ell+1}-U_\ell)}{L^2(\Omega)}^2
 + \norm{U_{\ell+1}-U_\ell}{H^{1/2}(\Gamma)}^2
 \\&\qquad\qquad\qquad\qquad\qquad
 + \norm{U_{0,\ell+1}-U_{0,\ell}}{H^{1/2}(\Gamma)}^2
 + \norm{\Phi_{\ell+1}-\Phi_\ell}{H^{-1/2}(\Gamma)}^2
 \big)
\end{split}
\end{align}
for the first contribution to $\eta_\ell(\EE_\ell^\Gamma)$ from~\eqref{eq:eta:boundary}.
The constant $C>0$ depends only on $\Gamma$, on $\kappa$-shape regularity of $\EE_{\ell+1}^\Gamma$
and $\TT_{\ell+1}$, and on the polynomial degree $q$.

$\bullet$ Fourth, we consider the second boundary contribution~\eqref{eq:eta:boundary} of the
discretization error estimator. 
Arguing as before, we now use the inverse estimates from 
Corollary~\ref{cor:invest} for $\slp$ and $\dlp$ as well as a local inverse
estimate $\norm{h_\ell\nabla_\Gamma V_\ell}{L^2(\Gamma)} \lesssim 
\norm{V_\ell}{H^{1/2}(\Gamma)}$ for all $V_\ell \in \SS^p(\TT_\ell)$ which
is e.g.\ found in~\cite[Proposition~4.1]{ccdpr:hypsing} for 2D and
\cite[Proposition~3]{akp} for 3D. We obtain
\begin{align}\label{eq:rho:boundary2}
\begin{split}
 \eta_{\ell+1}^{(2)}(\EE_{\ell+1}^\Gamma)^2
 &\le (1+\delta)\big(\eta_\ell^{(2)}(\EE_\ell^\Gamma)^2 -(1-q^{1/(d-1)})\eta_\ell^{(2)}(\EE_\ell^\Gamma\backslash\EE_{\ell+1}^\Gamma)^2\big)
 \\&\qquad
 + (1+\delta^{-1})C\,\big(
 \norm{U_{\ell+1}-U_\ell}{H^{1/2}(\Gamma)}^2
 \\&\qquad\qquad\qquad\qquad\qquad
 + \norm{U_{0,\ell+1}-U_{0,\ell}}{H^{1/2}(\Gamma)}^2
 + \norm{\Phi_{\ell+1}-\Phi_\ell}{H^{-1/2}(\Gamma)}^2
 \big)
\end{split}
\end{align}
for the first contribution to $\eta_\ell(\EE_\ell^\Gamma)$ from~\eqref{eq:eta:boundary}.
The constant $C>0$ depends only on $\kappa$-shape regularity of $\EE_{\ell+1}^\Gamma$
and $\TT_{\ell+1}$, on $\Gamma$, and on the polynomial degree $p$.

$\bullet$ Fifth, we consider the oscillation terms from~\eqref{eq:osc}. Arguing as
in the previous steps, it follows that
\begin{align}\label{eq:rho:osc}
 \begin{split}
 \osc_{\ell+1}^2
 &\le (1+\delta)\big(\osc_\ell^2 - (1-q^{1/(d-1)})\osc_\ell(\EE_\ell^\Gamma\backslash\EE_{\ell+1}^\Gamma)^2\big)
 \\&\qquad
 + (1+\delta^{-1})C\,\norm{U_{0,\ell+1}-U_{0,\ell}}{H^{1/2}(\Gamma)}^2,
 \end{split}
\end{align}
where the constant $C>0$ depends on $\Gamma$, $\kappa$-shape regularity of $\EE_\ell^\Gamma$, and the polynomial degree $p$.

$\bullet$ Sixth, we combine the reduction estimates~\eqref{eq:rho:volume}, \eqref{eq:rho:jump}, \eqref{eq:rho:boundary1}, \eqref{eq:rho:boundary2}, \eqref{eq:rho:osc} for the different parts of $\varrho_\ell$.
By a norm equivalence (with constants depending only on $\Omega$) we have 
$\norm{\nabla(U_{\ell+1}-U_\ell)}{L^2(\Omega)}^2
+\norm{U_{\ell+1}-U_\ell}{H^{1/2}(\Gamma)}^2
\simeq\norm{U_{\ell+1}-U_\ell}{H^1(\Omega)}^2$. Together with $0<q<1$ and hence $q^{2/d}\le q^{1/(d-1)}$, we obtain
\begin{align*}
 \varrho_{\ell+1}^2
 &= \eta_{\ell+1}(\TT_{\ell+1})^2
 + \eta_{\ell+1}(\EE_{\ell+1}^\Omega)^2
 + \eta_{\ell+1}(\EE_{\ell+1}^\Gamma)^2
 + \osc_{\ell+1}^2
 \\&
 \le (1+\delta)\Big(\eta_\ell(\TT_\ell)^2
 + \eta_\ell(\EE_\ell^\Omega)^2
 + \eta_\ell(\EE_\ell^\Gamma)^2
 + \osc_\ell^2
 \\&\qquad
 -(1-q^{1/(d-1)})\big(\eta_\ell(\TT_\ell\backslash\TT_{\ell+1})^2
 +\eta_\ell(\EE_\ell^\Omega\backslash\EE_{\ell+1}^\Omega)^2
 +\eta_\ell(\EE_\ell^\Gamma\backslash\EE_{\ell+1}^\Gamma)^2
 +\osc_\ell(\EE_\ell^\Gamma\backslash\EE_{\ell+1}^\Gamma)^2\big)
 \Big)
 \\&\quad
 +(1+\delta^{-1})C\,\Big(
 \norm{U_{\ell+1}-U_\ell}{H^1(\Omega)}^2
 + \norm{U_{0,\ell+1}-U_{0,\ell}}{H^{1/2}(\Gamma)}^2
 + \norm{\Phi_{\ell+1}-\Phi_\ell}{H^{-1/2}(\Gamma)}^2
 \Big),
\end{align*}
and the constant $C>0$ depends only on $\Omega$, $\kappa$-shape regularity of $\TT_\ell$ and $\EE_\ell^\Gamma$, and the polynomial degrees $p,q$. To abbreviate notation, we define
the index set $\II_\ell:=\TT_\ell\cup\EE_\ell^\Omega\cup\EE_\ell^\Gamma$. Together with norm equivalence $\norm{U_{\ell+1}-U_\ell}{H^1(\Omega)}^2 + \norm{\Phi_{\ell+1}-\Phi_\ell}{H^{-1/2}(\Gamma)}^2
\simeq\enorm{\U_{\ell+1}-\U_\ell}^2$, the previous estimate takes the form
\begin{align}\label{eq:estimator}
 \begin{split}
 \varrho_{\ell+1}^2
 &\le (1+\delta)\big(\varrho_\ell^2
 -(1-q^{1/(d-1)})\,\varrho_\ell(\II_\ell\backslash\II_{\ell+1})^2\big)
 \\&\quad
 +(1+\delta^{-1})C\,\big(\enorm{\U_{\ell+1}-\U_\ell}^2+\norm{U_{0,\ell+1}-U_{0,\ell}}{H^{1/2}(\Gamma)}^2\big).
 \end{split}
\end{align}

$\bullet$ Seventh, recall that marked elements and facets are refined so that $\MM_\ell \subseteq \II_\ell\backslash\II_{\ell+1}$. With the abbreviate notation, the D\"orfler marking~\eqref{eq:doerfler} takes the form
\begin{align*}
 \theta\,\varrho_\ell^2
 \le \varrho_\ell(\MM_\ell)^2
 \le \varrho_\ell(\II_\ell\backslash\II_{\ell+1})^2.
\end{align*}
Therefore, the estimate~\eqref{eq:estimator} becomes
\begin{align*}
 \varrho_{\ell+1}^2
 &= (1+\delta)\big(1 - (1-q^{1/(d-1)})\theta\big)\,\varrho_\ell^2
 +(1+\delta^{-1})C\,\big(\enorm{\U_{\ell+1}-\U_\ell}^2+\norm{U_{0,\ell+1}-U_{0,\ell}}{H^{1/2}(\Gamma)}^2\big).
\end{align*}
Now, note $0<(1-q^{1/(d-1)})\theta<1$. Therefore, we may choose some sufficiently small $\delta>0$ such that $\rho:= (1+\delta)\big(1 -(1-q^{1/(d-1)})\theta\big) <1$. This concludes the proof of the estimate~\eqref{eq:estconv}.
\end{proof}

We recall that assumptions~(T2) and (E2) imply $\XX_{\ell}\subseteq\XX_{\ell+1}$.
The proof of convergence~\eqref{eq:convergence} of the adaptive coupling relies on the following a~priori
convergence result from~\cite[Proof of Proposition~10]{afp} which essentially follows from the nestedness $\XX_{\ell}\subseteq\XX_{\ell+1}$ of discrete spaces and
the validity of C\'ea's lemma~\eqref{eq:cea}.

\begin{lemma}\label{lemma:apriori}
According to assumption~(T2) and (E2), the limit $\U_\infty^\star:=\lim_\ell\U_\ell^\star$ of Galerkin solutions exists
strongly in $\HH$. If the limit $U_{0,\infty}:=\lim_\ell U_{0,\ell}$ exists strongly in
$H^{1/2}(\Gamma)$, then also the limit $\U_\infty:=\lim_\ell\U_\ell$ of the perturbed
Galerkin solutions exists strongly in $\HH$.\qed
\end{lemma}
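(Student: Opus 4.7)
The proof naturally decomposes into two steps: first, establish strong convergence of the unperturbed Galerkin iterates $\U_\ell^\star\to\U_\infty^\star$ using only nestedness of $\XX_\ell$ and C\'ea's lemma; then, under the standing hypothesis $U_{0,\ell}\to U_{0,\infty}$ in $H^{1/2}(\Gamma)$, upgrade this to convergence of the data-perturbed iterates $\U_\ell\to\U_\infty$ via a data-stability argument. Throughout, one uses that the Costabel coupling bilinear form associated with~\eqref{eq:weakform} is bounded and elliptic on $\HH = H^1(\Omega)\times H^{-1/2}(\Gamma)$ (see~\cite{fembem}), so the C\'ea-type bound~\eqref{eq:cea} transfers verbatim to the Galerkin problem on any closed subspace of $\HH$.

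For the unperturbed step, nestedness $\XX_\ell\subseteq\XX_{\ell+1}$ guaranteed by~(T2) and~(E2) allows us to form the closed subspace $\XX_\infty:=\overline{\bigcup_{\ell\ge 0}\XX_\ell}\subseteq\HH$ and to solve the Galerkin formulation~\eqref{eq:galerkin} restricted to $\XX_\infty$, obtaining a unique $\U_\infty^\star\in\XX_\infty$. Testing the defining equations of $\U_\infty^\star$ against arbitrary $\V_\ell\in\XX_\ell\subseteq\XX_\infty$ and subtracting the defining equations of $\U_\ell^\star$ yields Galerkin orthogonality $a(\U_\infty^\star-\U_\ell^\star,\V_\ell)=0$; this identifies $\U_\ell^\star$ as the Galerkin approximation of $\U_\infty^\star$ on $\XX_\ell$, so C\'ea's lemma gives
\begin{align*}
\enorm{\U_\infty^\star-\U_\ell^\star}\le\c{cea}\inf_{\V_\ell\in\XX_\ell}\enorm{\U_\infty^\star-\V_\ell}\xrightarrow{\ell\to\infty}0,
\end{align*}
where the infimum vanishes by density of $\bigcup_\ell\XX_\ell$ in $\XX_\infty$.

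For the perturbed step, let $\u_\ell,\u_\infty\in\HH$ solve~\eqref{eq:perturbed:weakform} with data $U_{0,\ell}$ and $U_{0,\infty}$, respectively. Linearity of~\eqref{eq:perturbed:weakform} and the data-stability bound already used in~\eqref{eq:kp:apx} give $\enorm{\u_\ell-\u_\infty}\lesssim\norm{U_{0,\ell}-U_{0,\infty}}{H^{1/2}(\Gamma)}\to 0$. Let $\widetilde\U_\ell\in\XX_\ell$ denote the Galerkin approximation of the fixed datum $\u_\infty$; applying the unperturbed step with $\u_\infty$ in place of $\u$ produces $\widetilde\U_\ell\to\U_\infty$ strongly, where $\U_\infty\in\XX_\infty$ is the corresponding Galerkin solution on $\XX_\infty$. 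Since the bilinear form of~\eqref{eq:perturbed} is independent of $U_{0,\ell}$ (all data dependence sits on the right-hand side), linearity identifies $\U_\ell-\widetilde\U_\ell$ with the Galerkin approximation on $\XX_\ell$ of $\u_\ell-\u_\infty$, and C\'ea~\eqref{eq:cea} yields $\enorm{\U_\ell-\widetilde\U_\ell}\lesssim\enorm{\u_\ell-\u_\infty}\to 0$. A triangle inequality concludes $\U_\ell\to\U_\infty$ in $\HH$. The step requiring most care is the very last one: one must verify that the bilinear form in~\eqref{eq:perturbed} is genuinely data-independent so that the difference $\U_\ell-\widetilde\U_\ell$ is a Galerkin problem for the data defect alone; this is immediate from inspection of~\eqref{eq:perturbed} but must be spelled out in order to invoke the data-stability bound cleanly.
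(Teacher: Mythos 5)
Your argument is correct and is essentially the proof the paper delegates to~\cite[Proof of Proposition~10]{afp}: a~priori convergence of the Galerkin solutions via nestedness of the spaces $\XX_\ell$, density of $\bigcup_\ell\XX_\ell$ in $\XX_\infty$, and C\'ea's lemma, followed by Lipschitz dependence of the continuous and discrete solutions on the datum $U_{0,\ell}$. Your parenthetical care about the C\'ea bound holding on arbitrary closed subspaces of $\HH$ is exactly what the cited results~\cite{fembem,afp} supply, so no gap remains.
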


Moreover, the following result from~\cite[Lemma~18]{fpp} proves {\sl a~priori} convergence
of $U_{0,\ell}$ for the Scott-Zhang projection. We remark that \cite[lemma~{18}]{fpp} is 
stated and proved for $p=1$ and $H^1(\Omega)$, but the proof holds without any modification
also \revision{for general $p\ge1$} and $H^1(\Gamma)$. We recall that assumption~(E2) implies
nestedness $\SS^p(\EE_\ell^\Gamma)\subseteq\SS^p(\EE_{\ell+1}^\Gamma)$, whereas (E1)
states uniform $\kappa$-shape regularity which enters the approximation and stability
estimates of the Scott-Zhang projection.

\begin{lemma}\label{lemma:scottzhang}
According to~(E1) and~(E2), the limit $\JJ_\infty g=\lim_\ell\JJ_\ell g$ of the Scott-Zhang
projections exists strongly in $H^1(\Gamma)$ for all $g\in H^1(\Gamma)$.\qed
\end{lemma}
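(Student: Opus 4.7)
My plan is to show that $(\JJ_\ell g)_{\ell}$ is Cauchy in $H^1(\Gamma)$ by exploiting two structural ingredients: the nestedness $\SS^p(\EE_\ell^\Gamma) \subseteq \SS^p(\EE_{\ell+1}^\Gamma)$ from assumption~(E2), and the uniform $H^1(\Gamma)$-stability $\norm{\JJ_\ell w}{H^1(\Gamma)} \lesssim \norm{w}{H^1(\Gamma)}$ that follows from the standard Scott--Zhang estimates~\cite{scottzhang} under the uniform $\kappa$-shape regularity~(E1), with hidden constant depending only on $\kappa$ and $p$. The overall strategy is the standard one for a~priori convergence of stable projections onto nested subspaces, and the plan is to transfer the argument of~\cite[Lemma~18]{fpp} from the $H^1(\Omega)$-setting with $p=1$ to the present one.

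First, I would introduce the closed subspace
\begin{align*}
\XX_\infty := \overline{\bigcup_{\ell \ge 0} \SS^p(\EE_\ell^\Gamma)}^{H^1(\Gamma)}
\end{align*}
and establish strong convergence $\JJ_\ell v \to v$ in $H^1(\Gamma)$ for every $v \in \XX_\infty$. Indeed, for $v \in \SS^p(\EE_k^\Gamma)$ at some fixed level $k$, the projection property of $\JJ_\ell$ combined with nestedness yields $\JJ_\ell v = v$ for all $\ell \ge k$, so $\JJ_\ell v \to v$ trivially. A standard $\eps/3$-argument combining this exact reproduction with the uniform stability bound then extends the convergence to all $v \in \XX_\infty$.

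The main obstacle is the case of a general $g \in H^1(\Gamma)$ that need not lie in $\XX_\infty$. My starting point would be the identity
\begin{align*}
\JJ_m g - \JJ_\ell g = \JJ_m(g - \JJ_\ell g) \quad \text{for } m \ge \ell,
\end{align*}
which follows from $\JJ_\ell g \in \SS^p(\EE_\ell^\Gamma) \subseteq \SS^p(\EE_m^\Gamma)$ together with the projection property. Uniform stability implies that $(\JJ_\ell g)_\ell$ is bounded in $H^1(\Gamma)$ and lies entirely in $\XX_\infty$, so reflexivity of $H^1(\Gamma)$ yields a weakly convergent subsequence $\JJ_{\ell_k} g \rightharpoonup v^\star \in \XX_\infty$. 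Applying the previous step to $v^\star$ gives $\JJ_m v^\star \to v^\star$ in $H^1(\Gamma)$ as $m\to\infty$.

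The hardest step will be to identify $v^\star$ uniquely (independently of the subsequence) and to upgrade from weak to strong $H^1(\Gamma)$-convergence for the full sequence. I would do this by partitioning $\Gamma$ into the region where the local mesh size $h_\ell$ stabilizes---on which the locality of the Scott--Zhang construction makes $\JJ_\ell g$ eventually stationary in $\ell$ on each such element---and its complement, where $h_\ell \to 0$ and the local Scott--Zhang approximation estimate gives $\JJ_\ell g \to g$ locally in $H^1$. A diagonal/Cauchy argument combining both parts then identifies the limit and yields strong convergence. The key ingredients---uniform $H^1$-stability, local exact reproduction of discrete functions, and local approximation---are exactly those used in~\cite[Lemma~18]{fpp} and transfer verbatim to $H^1(\Gamma)$ for $p \ge 1$.
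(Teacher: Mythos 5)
The paper does not actually prove this lemma: it is quoted from \cite[Lemma~18]{fpp} together with the remark that the argument carries over verbatim to $H^1(\Gamma)$ and $p\ge1$. Your reconstruction has the right skeleton for that cited argument --- nestedness, uniform $H^1(\Gamma)$-stability, exact reproduction of discrete functions, and a splitting of $\Gamma$ into the part of the mesh that stabilizes and the part that keeps being refined --- but the decisive step is not correctly justified, and as written it fails.

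The gap is your claim that on the complement of the stabilized region ``the local Scott--Zhang approximation estimate gives $\JJ_\ell g\to g$ locally in $H^1$.'' For $g$ merely in $H^1(\Gamma)$ the first-order Scott--Zhang estimate yields decay only in $L^2$, i.e.\ $\norm{g-\JJ_\ell g}{L^2(E)}\lesssim h_\ell(E)\,\norm{\nabla_\Gamma g}{L^2(\omega_\ell(E))}$, while in the $H^1$-seminorm one only has local stability $\norm{\nabla_\Gamma(g-\JJ_\ell g)}{L^2(E)}\lesssim\norm{\nabla_\Gamma g}{L^2(\omega_\ell(E))}$ with no factor of $h$; so $h_\ell\to0$ on a region does not by itself give $H^1$-convergence there. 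Nor can you substitute absolute continuity of $\int_\Gamma|\nabla_\Gamma g|^2$ over the non-stabilized region, since that region need not shrink to measure zero (under uniform refinement it is all of $\Gamma$ at every level). The missing ingredient is a density step for the general $g$: approximate $g$ in $H^1(\Gamma)$ by a smoother $g_\eps$, use the uniform stability to reduce the Cauchy estimate for $\JJ_m g-\JJ_\ell g$ to the same quantity for $g_\eps$ up to $C\eps$, and only then invoke a rate $\norm{g_\eps-\JJ_\ell g_\eps}{H^1(E)}\lesssim h_\ell(E)^{s}\norm{g_\eps}{H^{1+s}(\omega_\ell(E))}$ on the non-stabilized part, combined with the (standard but nontrivial) fact that $\norm{h_\ell\chi_{\Gamma\setminus\Gamma_\ell^0}}{L^\infty(\Gamma)}\to0$, where $\Gamma_\ell^0$ denotes the union of elements whose patches are never refined again. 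Once this is in place the Cauchy property follows directly, and the weak-compactness/subsequence detour in your third paragraph becomes superfluous: it neither yields strong convergence nor identifies the limit without the Cauchy argument you need anyway.
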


\begin{proof}[Proof of Convergence~\eqref{eq:convergence} of Theorem~\ref{thm:convergence}]
According to Lemma~\ref{lemma:scottzhang}, the limit $U_{0,\infty} = \lim_\ell U_{0,\ell}$
exists strongly in $H^1(\Gamma)$ and hence also in $H^{1/2}(\Gamma)$.
Therefore, Lemma~\ref{lemma:apriori} applies
and proves that $\U_\infty:=\lim_\ell\U_\ell$ exists in $\HH$. Consequently,
the estimator reduction estimate~\eqref{eq:estconv} may be written as
\begin{align*}
 \varrho_{\ell+1}^2 \le \rho\,\varrho_\ell^2 + \alpha_\ell
 \quad\text{for all }\ell\ge0
\end{align*}
with a non-negative zero sequence $\alpha_\ell\to0$ as $\ell\to\infty$.
It is thus a consequence of elementary calculus that $\lim_\ell\varrho_\ell=0$,
cf.~\cite[Lemma~2.3]{estconv}. With $\varrho_\ell^2=\eta_\ell^2+\osc_\ell^2$
and reliability $\enorm{\u-\U_\ell}\lesssim\varrho_\ell$, we
conclude the proof.
\end{proof}

\begin{remark}
{\rm(i)} For $p=1$ and 2D, one may also use nodal interpolation $U_{0,\ell}=I_\ell u_0$ to discretize
the Dirichlet data. It is proved in~\cite[Proof of Proposition~5.2]{hh2mixed}
that the limit $U_{0,\infty}=\lim_\ell U_{0,\ell}$
then exists strongly in $H^1(\Gamma)$ and hence also in $H^{1/2}(\Gamma)$.\\
{\rm(ii)} For $p=1$ in 3D and newest vertex bisection, one may also use the $L^2$-orthogonal projection
$\Pi_\ell:L^2(\Gamma)\to\SS^1(\EE_\ell^\Gamma)$. It is proved in~\revision{\cite[Theorem~3]{kpp}} that $\Pi_\ell$ is $H^1$-stable and \revision{in~\cite[Lemma~12]{kp}} that the limit
$U_{0,\infty}=\lim_\ell U_{0,\ell}$ therefore exists weakly in $H^1(\Gamma)$ and, by interpolation,
hence strongly in $H^{1/2}(\Gamma)$.\\
{\rm(iii)} In either case, the claims~\eqref{eq:estconv}--\eqref{eq:convergence} of Theorem~\ref{thm:convergence} remain valid.
\end{remark}

\subsection{Extension to nonlinear transmission problems}\label{section:fembem:extension}
We consider the interface problem~\eqref{eq:strongform} with \emph{nonlinear}
$\frakA:\R^d\to\R^d$ and $\frakA\nabla u(x) := \frakA(\nabla u(x))$, see e.g.~\cite{cs1995,fembem}. 
Under monotonicity~\eqref{eq:A:elliptic} and Lipschitz 
continuity~\eqref{eq:A:lipschitz}, the coupling formulation~\eqref{eq:weakform}
as well as the discretizations~\eqref{eq:galerkin} and~\eqref{eq:perturbed}
still admit unique solutions. Moreover, the {\sl a~posteriori} error analysis 
of Proposition~\ref{prop:aposteriori} remains valid. Here, note that 
$\frakA\nabla U_\ell$ is $\TT_\ell$-piecewise Lipschitz continuous so that
all occurring terms as e.g.\ the jump terms~\eqref{eq:eta:jump}
are well-defined. 

While the convergence result of Theorem~\ref{thm:convergence} holds for
arbitrary polynomial degrees $p\ge1$ and $q\ge0$ in case of linear problems,
we did not succeed to prove the same for nonlinear 
transmission problem. Difficulties are only met in the first step
of the proof.
In~\eqref{eq0:rho:volume}, the 
term
\begin{align*}
 \sum_{T'\in\TT_{\ell+1}}|T'|^{2/d}\norm{\div(\frakA\nabla U_{\ell+1}-\frakA\nabla U_\ell)}{L^2(T')}^2
 \simeq \norm{h_\ell\,\div_\ell(\frakA\nabla U_{\ell+1}-\frakA\nabla U_\ell)}{L^2(\Omega)}^2
\end{align*}
arises, with $\div_\ell$ the $\TT_\ell$-piecewise divergence operator. However, for lowest-order volume elements $p=1$, this term vanishes.
Thus, our proof also covers the adaptive FEM-BEM coupling for certain nonlinear
coupling problems and lowest-order
elements $p=1$, where $\frakA:\R^d\to\R^d$ satisfies~\eqref{eq:A:elliptic}--\eqref{eq:A:lipschitz}.

For $p\ge1$ and $\frakA$ being linear, we used a scaling argument to bound the
latter term in~\eqref{eq0:rho:volume} by $\norm{\frakA}{W^{1,\infty}(\Omega)}\norm{\nabla U_{\ell+1}-\nabla U_\ell}{L^2(\Omega)}$.
For $\frakA:\R^d\to\R^d$ being nonlinear, we did neither succeed to find a similar bound (which seems, in general, to be impossible) nor to prove that this term vanishes as $\ell\to\infty$.

\section{Numerical experiments}
\label{section:numerics}
\def\err{\rm err}
\begin{figure}[htbp]
\begin{center}
  \includegraphics[width=0.5\textwidth]{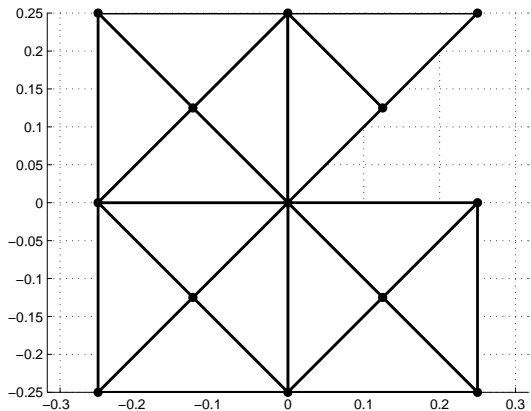}
  \caption{Z-shaped domain and initial triangulations $\TT_0$,  
  $\EE_0^\Gamma$ with $\#\TT_0 = 14$ triangles and \ $\#\EE_0^\Gamma = 10$
  boundary elements.}
  \label{fig:Zshape}
\end{center}
\end{figure}

\begin{figure}[htbp]
  \centering
  \begin{minipage}[b]{0.49\textwidth}
    \psfrag{convgraphResidual}[c][c]{}
    \psfrag{error}[c][c]{}
    \psfrag{nE}[c][c]{number of volume elements~$N$}
    \psfrag{etaAdap}{\tiny $\varrho_\ell(\Omega)$, adap}
    \psfrag{errAdap}{\tiny $\err_\ell(\Omega)$, adap}
    \psfrag{etaUnif}{\tiny $\varrho_\ell(\Omega)$, unif}
    \psfrag{errUnif}{\tiny $\err_\ell(\Omega)$, unif}

    \psfrag{OON1}[l][B]{\tiny $\OO(N^{-1})$}
    \psfrag{OON27}[l][B]{\tiny $\OO(N^{-2/7})$}

    \includegraphics[width=\textwidth]{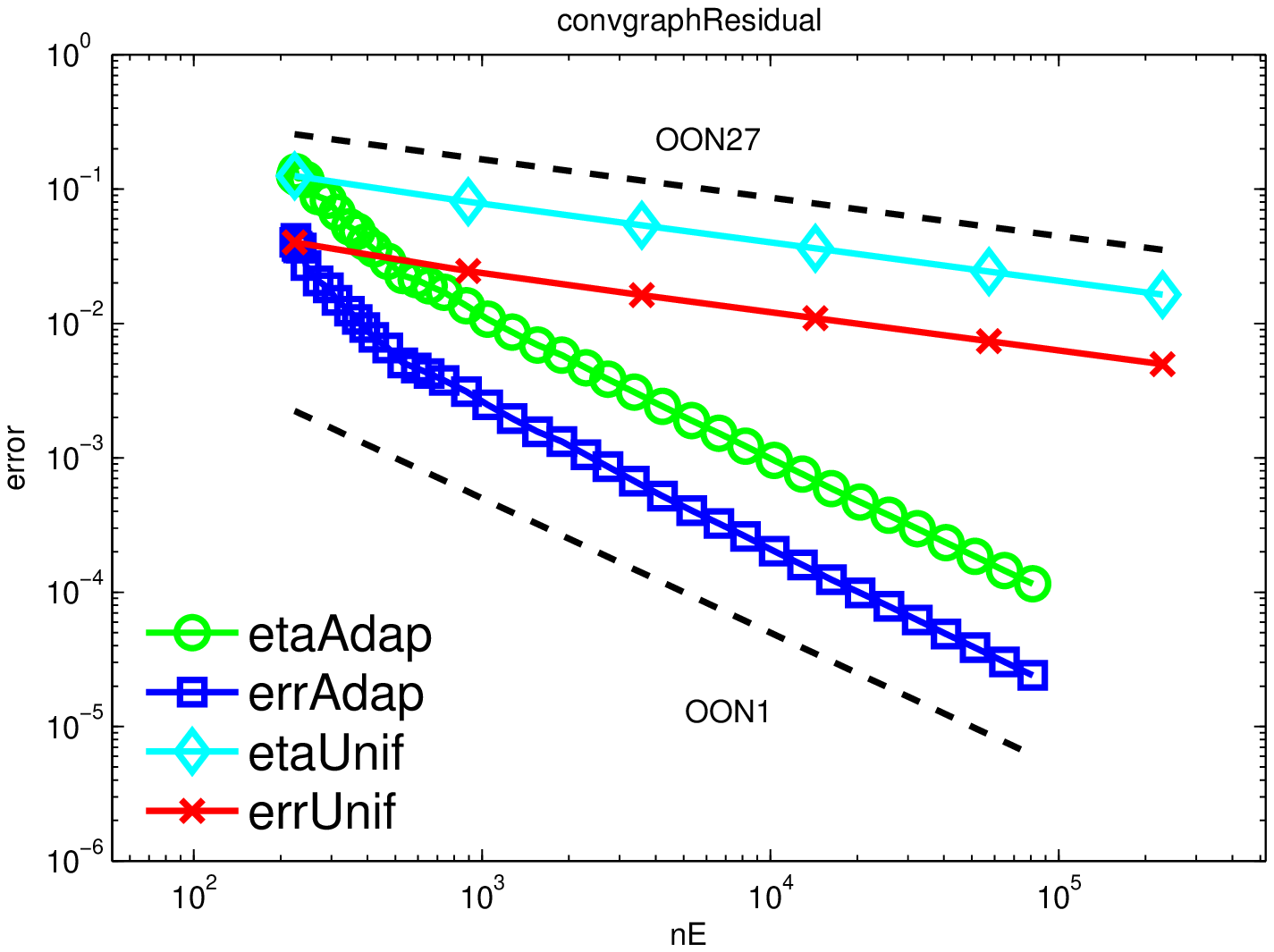}
  \end{minipage}
  \begin{minipage}[b]{0.49\textwidth}
    \psfrag{convgraphResidual}[c][c]{}
    \psfrag{error}[c][c]{}
    \psfrag{nB}[c][c]{number of boundary elements~$M$}
    \psfrag{etaAdap}{\tiny $\varrho_\ell(\Gamma)$, adap}
    \psfrag{errAdap}{\tiny $\err_\ell(\Gamma)$, adap}
    \psfrag{etaUnif}{\tiny $\varrho_\ell(\Gamma)$, unif}
    \psfrag{errUnif}{\tiny $\err_\ell(\Gamma)$, unif}

    \psfrag{OON23}[l][B]{\tiny $\OO(M^{-2/3})$}
    \psfrag{OON47}[l][B]{\tiny $\OO(M^{-4/7})$}
    \psfrag{OON52}[l][B]{\tiny $\OO(M^{-5/2})$}

    \includegraphics[width=\textwidth]{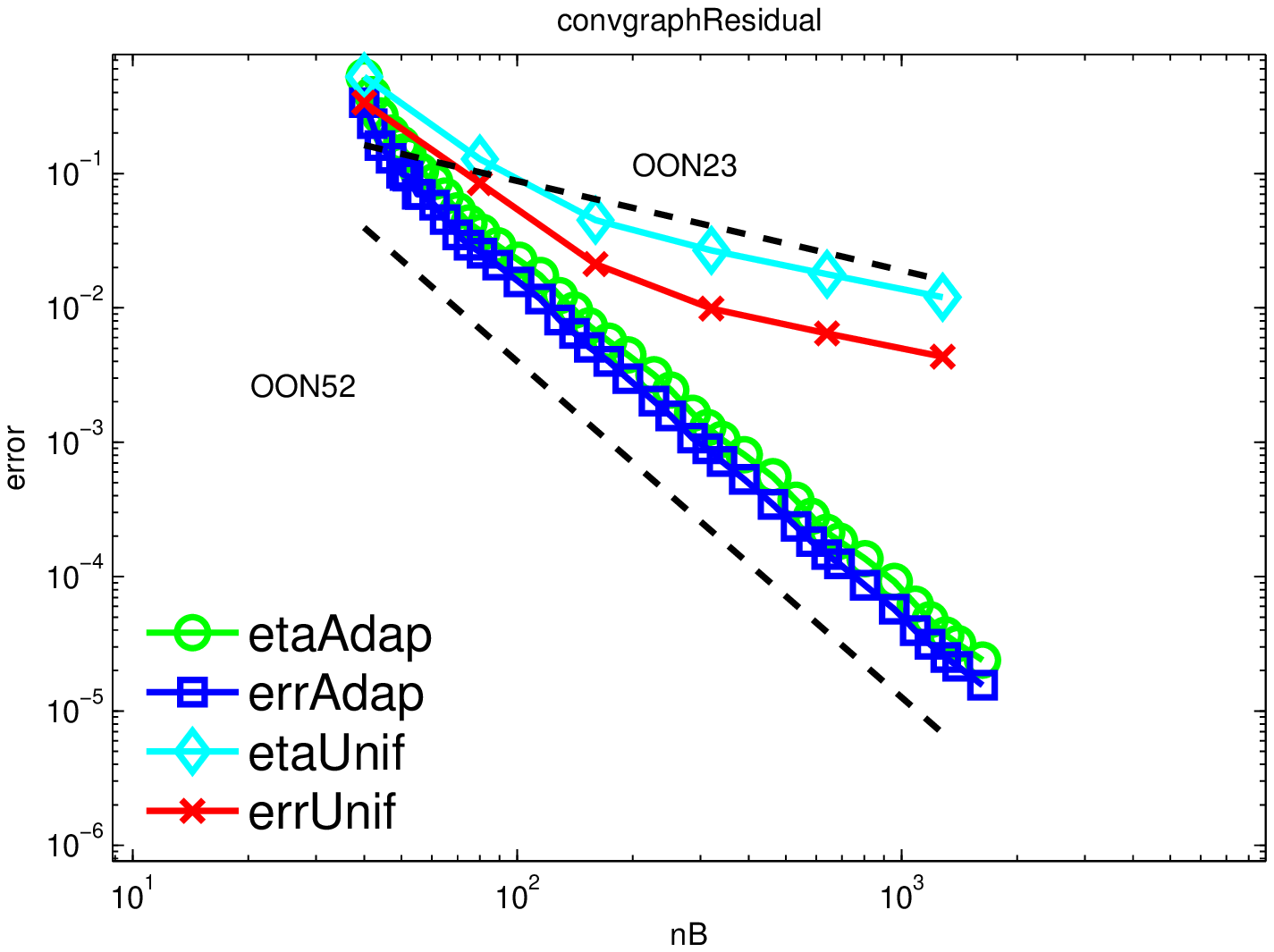}
  \end{minipage}
  \caption{$\err_\ell(\Omega)$ and $\varrho_\ell(\Omega)$ vs.\ $N=\#\TT_\ell$ (left) as well as $\err_\ell(\Gamma)$ and $\varrho_\ell(\Gamma)$ vs.\ $M=\#\EE_\ell^\Gamma$ for adaptive and uniform mesh refinement
with $p=2$ and $q=1$.}
  \label{fig:S2P1}
\end{figure}

\begin{figure}[htbp]
  \centering
  \begin{minipage}[b]{0.49\textwidth}
    \psfrag{convgraphResidual}[c][c]{}
    \psfrag{error}[c][c]{}
    \psfrag{nE}[c][c]{number of volume elements~$N$}
    \psfrag{etaAdap}{\tiny $\varrho_\ell(\Omega)$, adap}
    \psfrag{errAdap}{\tiny $\err_\ell(\Omega)$, adap}
    \psfrag{etaUnif}{\tiny $\varrho_\ell(\Omega)$, unif}
    \psfrag{errUnif}{\tiny $\err_\ell(\Omega)$, unif}

    \psfrag{OON1}[l][B]{\tiny $\OO(N^{-1})$}
    \psfrag{OON27}[l][B]{\tiny $\OO(N^{-2/7})$}

    \includegraphics[width=\textwidth]{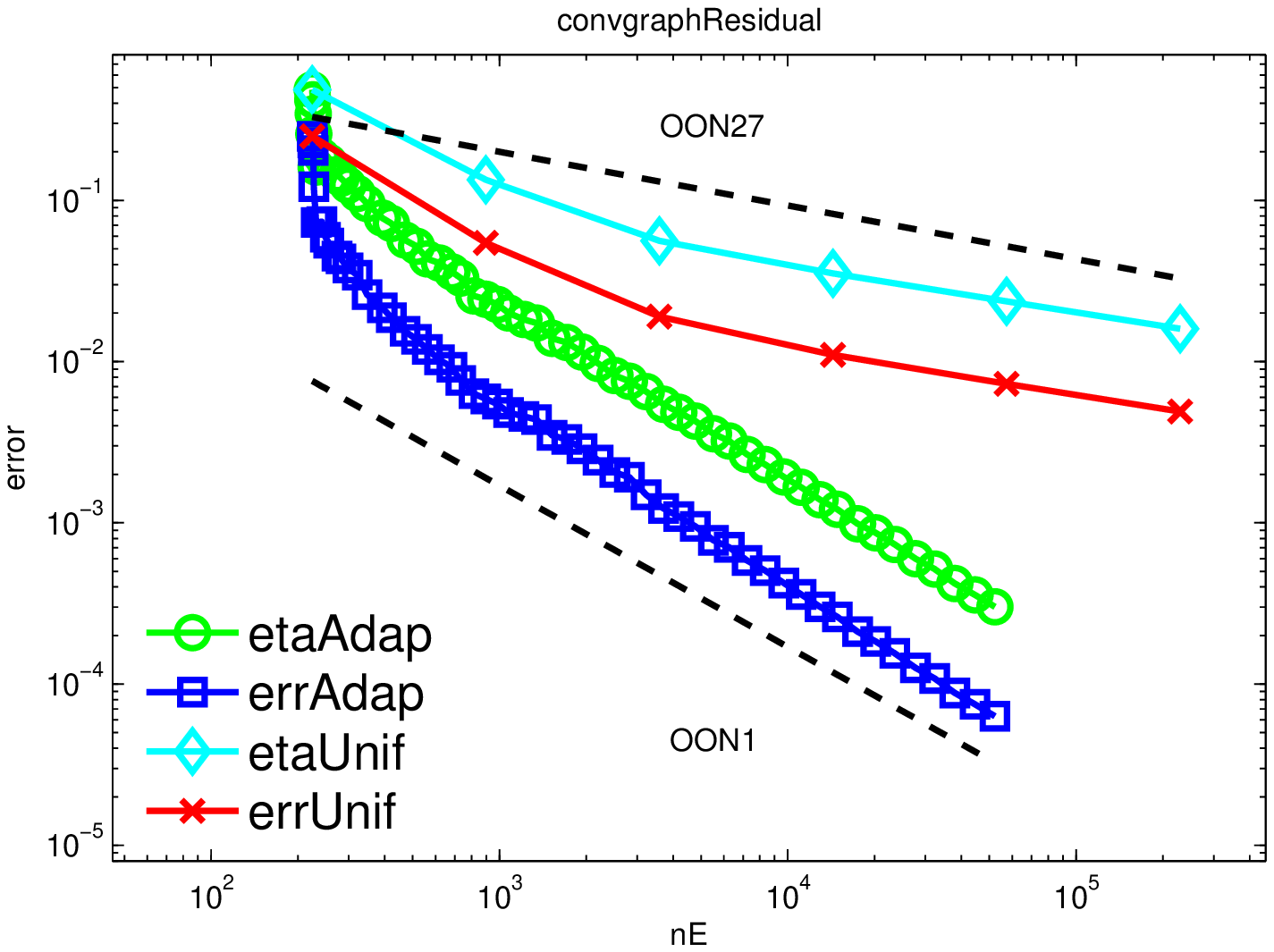}
  \end{minipage}
  \begin{minipage}[b]{0.49\textwidth}
    \psfrag{convgraphResidual}[c][c]{}
    \psfrag{error}[c][c]{}
    \psfrag{nB}[c][c]{number of boundary elements~$M$}
    \psfrag{etaAdap}{\tiny $\varrho_\ell(\Gamma)$, adap}
    \psfrag{errAdap}{\tiny $\err_\ell(\Gamma)$, adap}
    \psfrag{etaUnif}{\tiny $\varrho_\ell(\Gamma)$, unif}
    \psfrag{errUnif}{\tiny $\err_\ell(\Gamma)$, unif}

    \psfrag{OON23}[l][B]{\tiny $\OO(M^{-2/3})$}
    \psfrag{OON32}[l][B]{\tiny $\OO(M^{-3/2})$}

    \includegraphics[width=\textwidth]{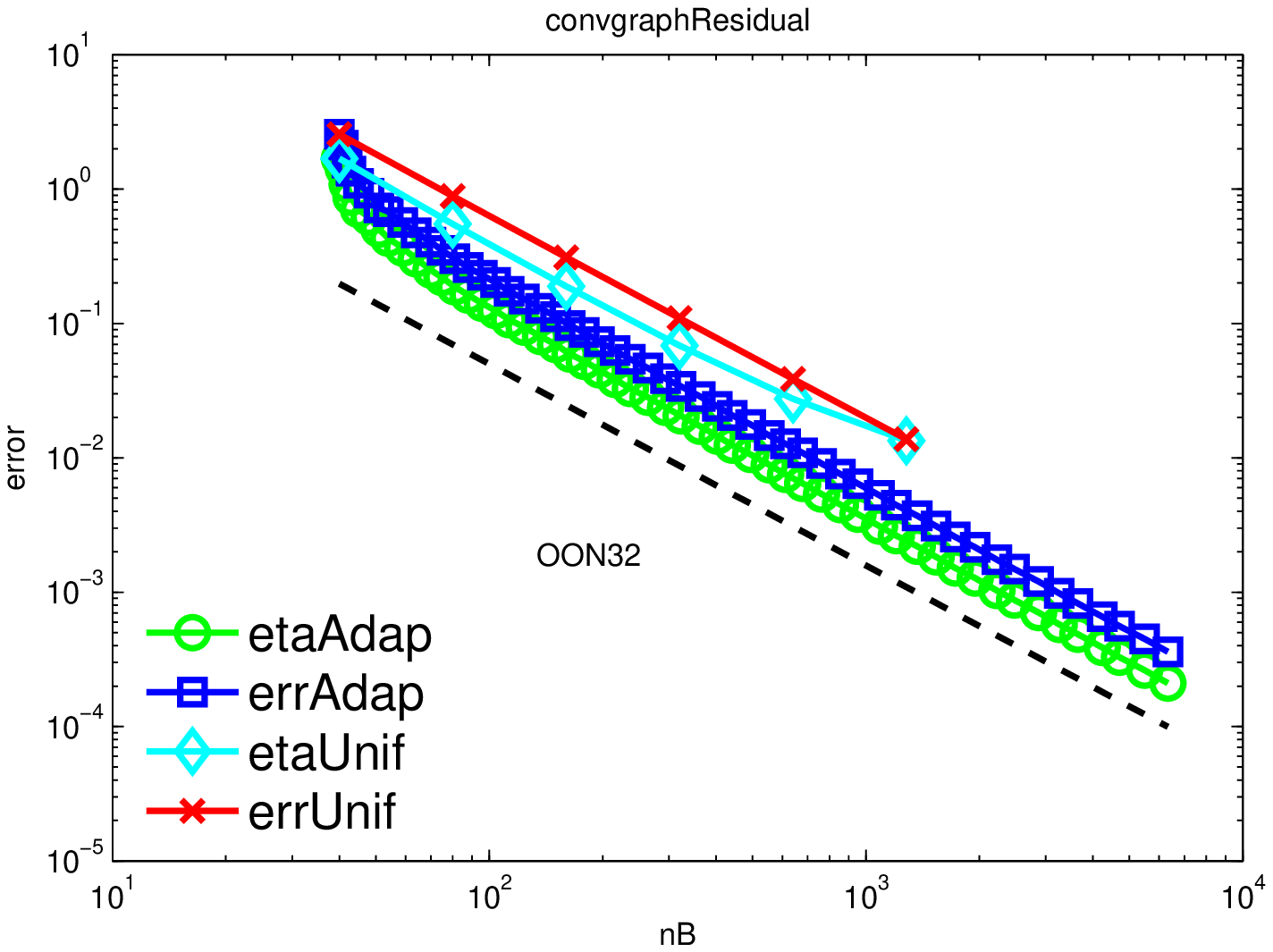}
  \end{minipage}
  \caption{$\err_\ell(\Omega)$ and $\varrho_\ell(\Omega)$ vs.\ $N=\#\TT_\ell$ (left) as well as $\err_\ell(\Gamma)$ and $\varrho_\ell(\Gamma)$ vs.\ $M=\#\EE_\ell^\Gamma$ for adaptive and uniform mesh refinement with $p=2$ and $q=0$.}
  \label{fig:S2P0}
\end{figure}

\begin{figure}[htbp]
  \centering
  \begin{minipage}[b]{0.49\textwidth}
    \psfrag{convgraphResidual}[c][c]{}
    \psfrag{error}[c][c]{}
    \psfrag{nE}[c][c]{number of volume elements~$N$}
    \psfrag{etaAdap}{\tiny $\varrho_\ell(\Omega)$, adap}
    \psfrag{errAdap}{\tiny $\err_\ell(\Omega)$, adap}
    \psfrag{etaUnif}{\tiny $\varrho_\ell(\Omega)$, unif}
    \psfrag{errUnif}{\tiny $\err_\ell(\Omega)$, unif}

    \psfrag{OON1}[l][B]{\tiny $\OO(N^{-1/2})$}
    \psfrag{OON27}[l][B]{\tiny $\OO(N^{-2/7})$}

    \includegraphics[width=\textwidth]{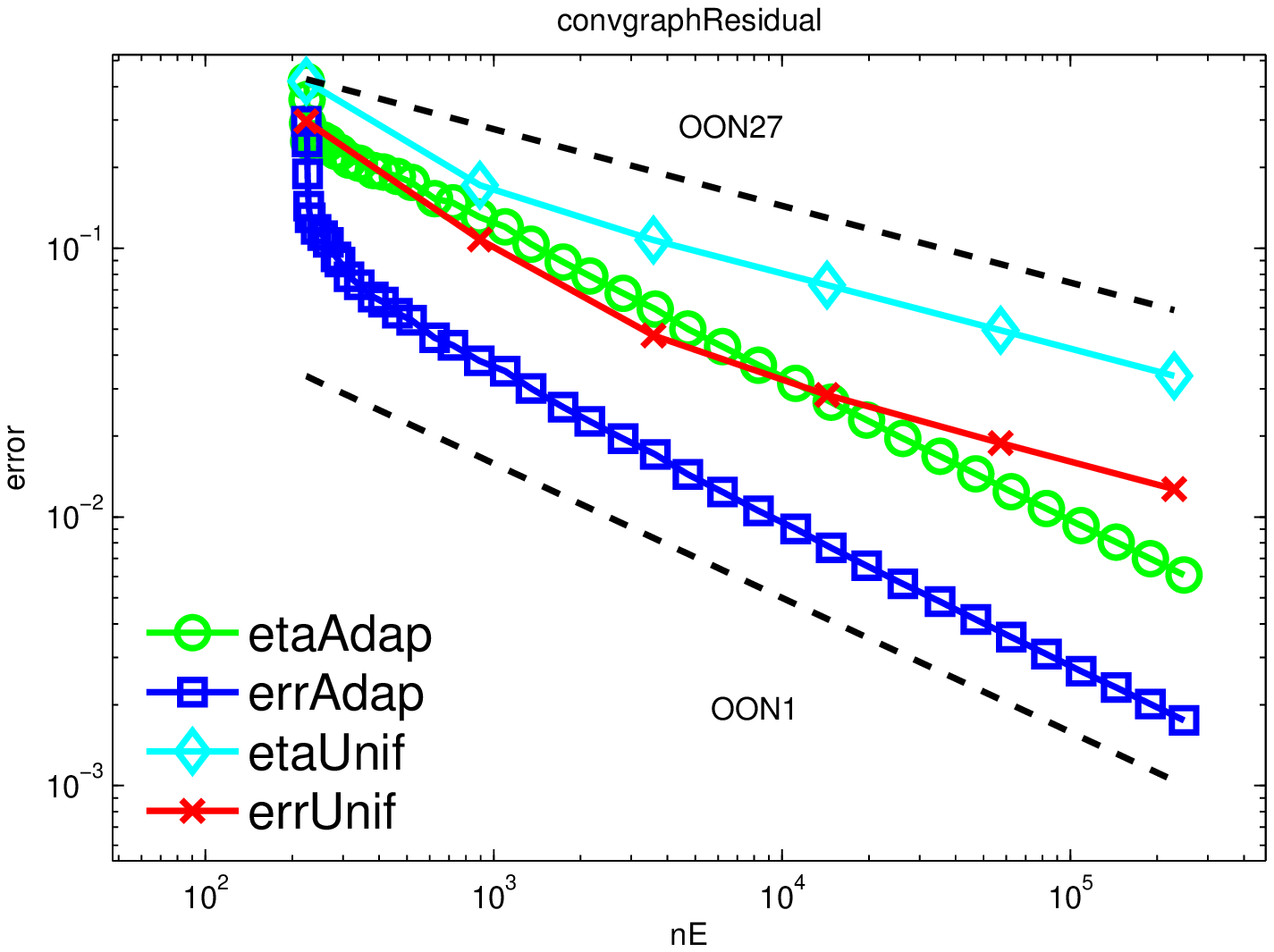}
  \end{minipage}
  \begin{minipage}[b]{0.49\textwidth}
    \psfrag{convgraphResidual}[c][c]{}
    \psfrag{error}[c][c]{}
    \psfrag{nB}[c][c]{number of boundary elements~$M$}
    \psfrag{etaAdap}{\tiny $\varrho_\ell(\Gamma)$, adap}
    \psfrag{errAdap}{\tiny $\err_\ell(\Gamma)$, adap}
    \psfrag{etaUnif}{\tiny $\varrho_\ell(\Gamma)$, unif}
    \psfrag{errUnif}{\tiny $\err_\ell(\Gamma)$, unif}

    \psfrag{OON12}[l][B]{\tiny $\OO(M^{-1.2})$}
    \psfrag{OON2}[l][B]{\tiny $\OO(M^{-2})$}

    \includegraphics[width=\textwidth]{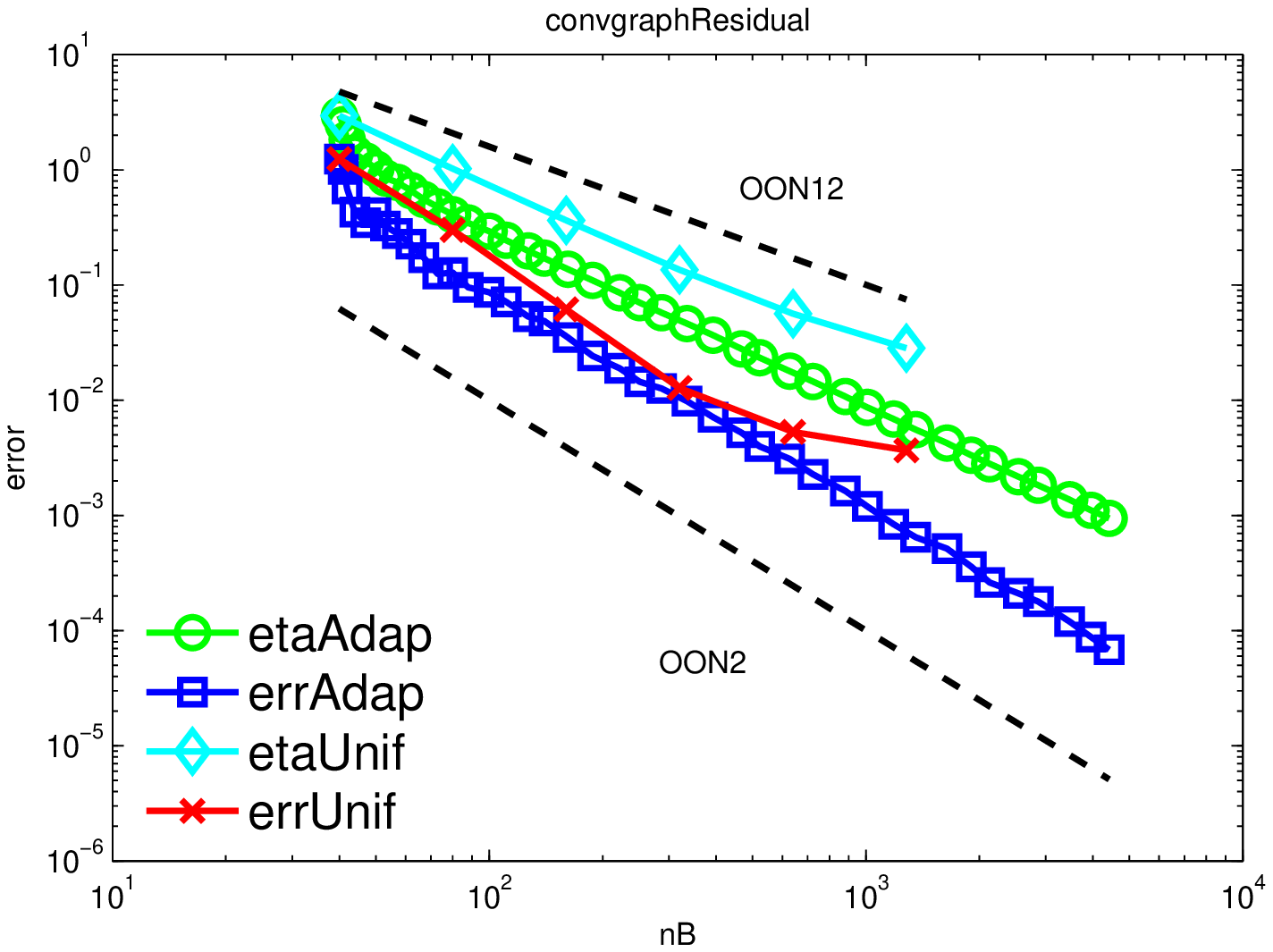}
  \end{minipage}
  \caption{$\err_\ell(\Omega)$ and $\varrho_\ell(\Omega)$ vs.\ $N=\#\TT_\ell$ (left) as well as $\err_\ell(\Gamma)$ and $\varrho_\ell(\Gamma)$ vs.\ $M=\#\EE_\ell^\Gamma$ for adaptive and uniform mesh refinement with $p=1$ and $q=1$.}
  \label{fig:S1P1}
\end{figure}

\begin{figure}[htbp]
  \centering
  \begin{minipage}[b]{0.49\textwidth}
    \psfrag{convgraphResidual}[c][c]{}
    \psfrag{error}[c][c]{}
    \psfrag{nE}[c][c]{number of volume elements~$N$}
    \psfrag{etaAdap}{\tiny $\varrho_\ell(\Omega)$, adap}
    \psfrag{errAdap}{\tiny $\err_\ell(\Omega)$, adap}
    \psfrag{etaUnif}{\tiny $\varrho_\ell(\Omega)$, unif}
    \psfrag{errUnif}{\tiny $\err_\ell(\Omega)$, unif}

    \psfrag{OON12}[l][B]{\tiny $\OO(N^{-1/2})$}
    \psfrag{OON27}[l][B]{\tiny $\OO(N^{-2/7})$}

    \includegraphics[width=\textwidth]{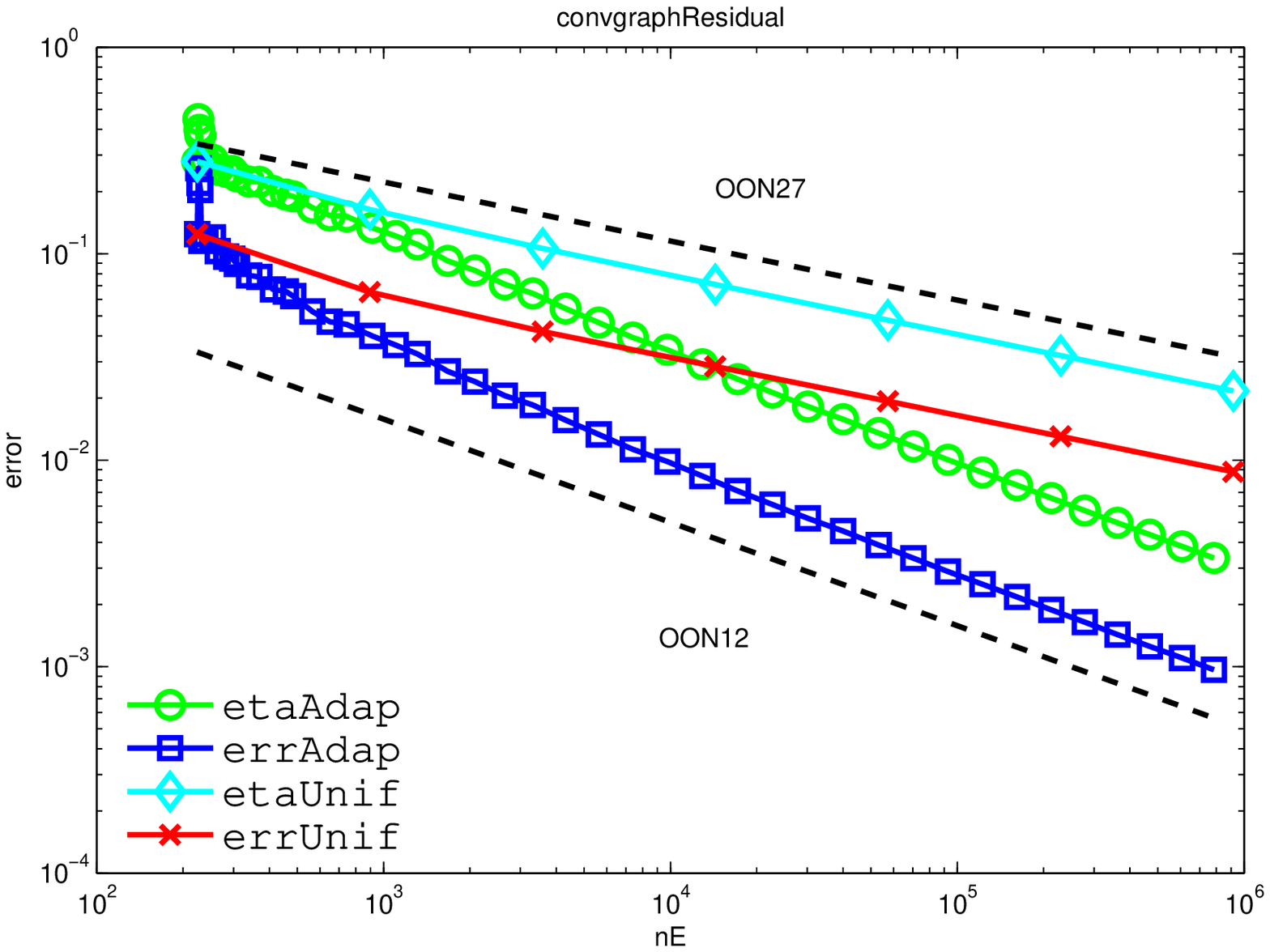}
  \end{minipage}
  \begin{minipage}[b]{0.49\textwidth}
    \psfrag{convgraphResidual}[c][c]{}
    \psfrag{error}[c][c]{}
    \psfrag{nB}[c][c]{number of boundary elements~$M$}
    \psfrag{etaAdap}{\tiny $\varrho_\ell(\Gamma)$, adap}
    \psfrag{errAdap}{\tiny $\err_\ell(\Gamma)$, adap}
    \psfrag{etaUnif}{\tiny $\varrho_\ell(\Gamma)$, unif}
    \psfrag{errUnif}{\tiny $\err_\ell(\Gamma)$, unif}

    \psfrag{OON32}[l][B]{\tiny $\OO(M^{-3/2})$}

    \includegraphics[width=\textwidth]{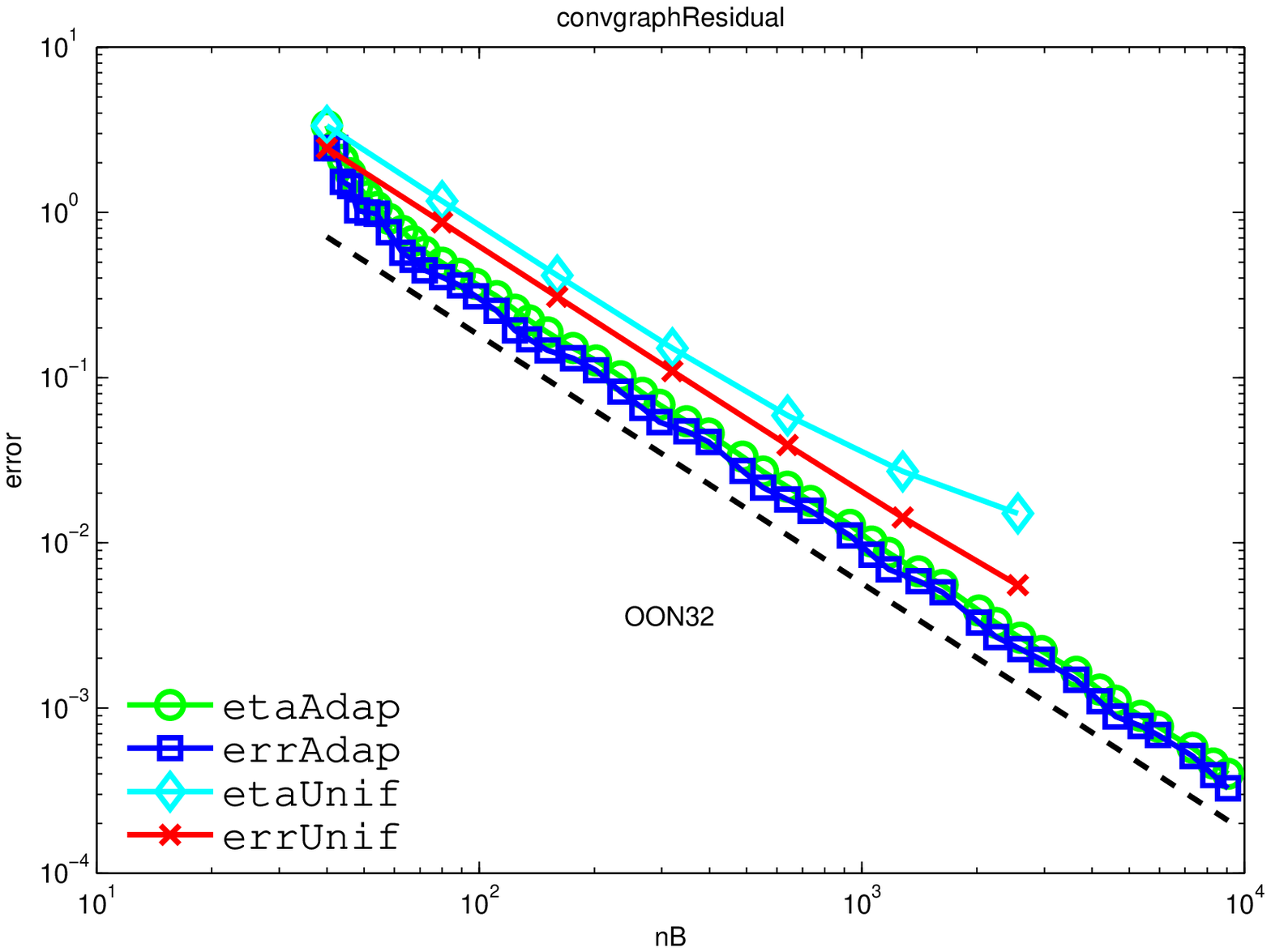}
  \end{minipage}
  \caption{$\err_\ell(\Omega)$ and $\varrho_\ell(\Omega)$ vs.\ $N=\#\TT_\ell$ (left) as well as $\err_\ell(\Gamma)$ and $\varrho_\ell(\Gamma)$ vs.\ $M=\#\EE_\ell^\Gamma$ for adaptive and uniform mesh refinement with $p=1$ and $q=0$.}
  \label{fig:S1P0}
\end{figure}
\noindent
In this section, we present 2D calculations for 
the perturbed Galerkin discretization~\eqref{eq:perturbed} of the model
problem~\eqref{eq:strongform}. We consider different choices of the polynomial
degrees $p$ and $q$ for the spaces $\SS^p(\TT_\ell)$, $\PP^q(\EE_\ell^\Gamma)$.
For ease of implementation and also for stability reasons, we restrict ourselves to
the case $\EE_\ell^\Gamma = \TT_\ell|_\Gamma$, i.e., the boundary mesh is taken as 
the trace of the volume triangulation. 
All computations were performed on a 64-BIT Intel(R) Core(TM) i7-3930K Linux
work station with 32GB of RAM. For the computation of the discrete boundary integral
operators we used the \textsc{Matlab} BEM-library \texttt{HILBERT}~\cite{hilbert},
and all systems of linear equations were solved with the \textsc{Matlab} backslash operator.

We employ the adaptive Algorithm~\ref{algorithm} with $\theta =0.25$ and compare the
results with uniform mesh refinement (this can be realized by setting $\theta = 1$ in 
Algorithm~\ref{algorithm}). The domain $\Omega$ is taken as the Z-shaped domain 
visualized in Fig.~\ref{fig:Zshape}. We take $\frakA = \rm Id$ in~\eqref{eq:strongform} 
and prescribe the data $f$, $u_0$, $\phi_0$ such that the exact solution is given by 
\begin{align*}
  u^{\rm int}(r,\varphi) &= r^{4/7}\sin(\tfrac47\varphi), \\
  u^{\rm ext}(x,y) &= \frac{x+y+0.25}{|x+\tfrac18|^2+|y+\tfrac18|^2};
\end{align*}
the polar coordinates $(r,\varphi)$ are taken with respect to the origin $(0,0)$. 
The prescribed solution $u^{\rm int}$ has the typical singularity
at the reentrant corner $(x,y) = (0,0)$, which leads to a reduced order of convergence
$\OO(h^{4/7})$ for uniform mesh refinement.

Recall that the {\sl a~posteriori} error estimator $\varrho_\ell$ from~\eqref{eq:indicators} is split into volume
contributions~\eqref{eq:eta:volume}--\eqref{eq:eta:jump} and boundary
contributions~\eqref{eq:eta:boundary}--\eqref{eq:osc}
\begin{align*}
  \varrho_\ell^2 = \varrho_\ell(\Omega)^2 + \varrho_\ell(\Gamma)^2.
\end{align*}
Arguing as in~\cite{afp}, the C\'ea-type quasi-optimality produces 
\begin{align*}
  \enorm{\u-\U_\ell}^2 = \norm{u-U_\ell}{H^1(\Omega)}^2 +
  \norm{\phi-\Phi_\ell}{H^{-1/2}(\Gamma)}^2 
  &\lesssim  \norm{u-U_\ell}{H^1(\Omega)}^2 + \norm{h_\ell^{1/2}
  (\phi-\Phi_\ell)}{L^2(\Gamma)}^2 \\
  & =: \big(\err_\ell(\Omega)\big)^2 + \big(\err_\ell(\Gamma)\big)^2
\end{align*}
and thus provides a computable upper bound for the overall error.
We plot the error contribution $\err_\ell(\Omega)$ as well as the
corresponding estimator part $\varrho_\ell(\Omega)$ versus the
number of volume elements $N = \#\TT_\ell$. We also plot the
boundary contributions $\err_\ell(\Gamma)$ and $\varrho_\ell(\Gamma)$
versus the number of boundary elements $M = \#\EE_\ell^\Gamma$.
We observe convergence rates proportional to $N^{-\alpha}$ resp.\ $M^{-\beta}$ with
some $\alpha$, $\beta>0$ for all computed quantities.
The optimal convergence rate for the FEM part with $\SS^p(\TT_\ell)$ 
is $\alpha = p/2$, whereas the optimal rate for the BEM part with
$\PP^q(\EE_\ell^\Gamma)$ is $\beta = 3/2+q$.
Consequently, the optimal overall convergence rate for the FEM-BEM coupling is
proportional to $N^{-\alpha} + M^{-\beta}$.

The use of a uniform mesh refinement leads to suboptimal convergence rates for 
both the volume and boundary quantities. In
particular, we observe the rate $\alpha = 2/7$ independently of the
chosen polynomial order $p,q$, see
Figures~\ref{fig:S2P1},~\ref{fig:S2P0},~\ref{fig:S1P1}, and~\ref{fig:S1P0}.
We recall that for uniform mesh refinement there holds $N \simeq M^2$ and
therefore the overall convergence rate is proportional to $N^{-2/7}$.

In contrast to the uniform approach, the adaptive mesh refinement strategy
recovers the optimal overall convergence rate. We observe optimal rates $\alpha =
p/2$ and $\beta = 3/2 + q$ for $p=2$, $q=1$ as well as $p=2$, $q=0$ and $p=1$, $q=0$, see
Figures~\ref{fig:S2P1},~\ref{fig:S2P0}, and~\ref{fig:S1P0}.
In the case of $p=1,q=1$ we observe that the error estimator contribution
$\varrho_\ell(\Gamma)$ converges with order $M^{-3/2}$, whereas the error
quantity $\err_\ell(\Gamma)$ has some slightly higher convergence rate.
Nevertheless, we stress that the optimal overall convergence order is achieved,
since our computations also show that $N\simeq M^2$ in the case of $p=1,q=1$ and
therefore the overall error is dominated by the FEM part, which converges with
order $\alpha = 1/2$, see Figure~\ref{fig:S1P1}.

In conclusion, our numerical experiments underline the fact that the adaptive
mesh refinement strategy of Algorithm~\ref{algorithm}
achieves the optimal convergence rates in the presence of singularities of the exact solution.

\bigskip
\noindent
\textbf{Acknowledgement.}
The research of the authors M.~Aurada, M.~Feischl, M.~Karkulik, and D.~Praetorius is supported
through the FWF project
\emph{Adaptive Boundary Element Method}, funded by the Austrian Science Fund (FWF) under grant P21732.
The research of the author T.~F\"uhrer is additionally supported through the project
\emph{Effective Numerical Methods for the Johnson-N\'ed\'elec Coupling of FEM and BEM} funded by
the innovative projects initiative of Vienna University of Technology.

\newcommand{\bibentry}[2][]{\bibitem[#1]{#2}}

\end{document}